\theoremstyle{plain}
\newtheorem{thm}{Theorem}[section]
\newtheorem{prop}[thm]{Proposition}
\newtheorem{cor}[thm]{Corollary}
\newtheorem{lem}[thm]{Lemma}
\newtheorem{conj}[thm]{Conjecture}
\newtheorem{example}[thm]{Example}
\newtheorem{sblem}[thm]{Lemma}
\newtheorem{claim}[thm]{Claim}
\newtheorem{rem}[thm]{Remark}
\newtheorem{defn}[thm]{Definition}
\theoremstyle{definition}
\newenvironment{pf}{\proof[\proofname]}{\endproof}
\newcommand{\Cal}{\mathcal}
\define\Ker{\mathrm{Ker}\,}%
\newcommand{\F}{{\mathbb{F}}}
\newcommand{\Q}{{\mathbb{Q}}}
\newcommand{\Z}{{\mathbb{Z}}}
\newcommand{\upc}[1]{\overset {\lower 0.3ex \hbox{${\;}_{\circ}$}}{#1}}
\newcommand{\La}{\operatorname{\Lambda}}
\newcommand{\sel}{\mathrm{Sel}}
\newcommand{\cS}{\operatorname{\Cal S}}
\newcommand{\Ep}{E_{p^\infty}}
\newcommand{\Hfl}{H^1_{\mathrm{fl}}}
\newcommand{\lra}{\longrightarrow}
\newcommand{\uset}{\underset}
\newcommand{\cal}{\mathcal}
\newcommand{\mrm}{\mathrm}
\newcommand{\mbb}{\mathbb}
\newcommand{\mc}{\mathcal}
\newcommand{\oline}{\overline}
\newcommand{\finf}{F_{\infty}}
\newcommand{\Fpinf}{{F'}_{\infty}^{,(p)}}
\newcommand{\fpinf}{{F}_{\infty}^{(p)}}
\newcommand{\what}{\widehat}
\newcommand{\bd}[1]{\mbox{\boldmath$#1$}}
\title{On the Selmer groups of abelian varieties over function fields of 
characteristic $p>0$}
\author{Tadashi Ochiai and Fabien Trihan}
\thanks{The first author is supported by JSPS. 
The second author has been supported by the FNRS of Belgium.}
\begin{document}
\maketitle
\begin{abstract}
In this paper, we study a ($p$-adic) geometric analogue for 
abelian varieties over a function field of characteristic $p$ 
of the cyclotomic Iwasawa theory and the non-commutative Iwasawa theory 
for abelian varieties over a number field initiated by Mazur and Coates 
respectively. We will prove some analogue of the principal results 
obtained in the case over a number field and we study new phenomena 
which did not happen in the case of number field case. 
We propose also a conjecture (Conjecture $\ref{conj:ourconj}$) 
which might be considered 
as a counterpart of the principal conjecture in the case over a number 
field. \par 
This is a preprint which is distributed since 2005 which 
is still in the process of submision. 
Following a recent modification of some technical mistakes in the 
previous version of the paper as well as an amelioration 
of the presentation of the paper, we decide wider distribution 
via the archive. 
\end{abstract}
\tableofcontents
\begin{section}{Introduction} 
Iwasawa theory was first initiated by Iwasawa who 
studied ideal class groups in the cyclotomic $\Z_p$-extension 
$K^{\mrm{cyc},(p)}$ of a number field $K$ 
for a fixed prime $p$, which he regarded 
as an analogue of the extension of the coefficient field in the theory 
of algebraic curves over a finite field. 
He formulated the Iwasawa main conjecture, which predicts 
the equality between 
the algebraic $p$-adic $L$-function and the analytic $p$-adic $L$-function 
and was solved later by Mazur-Wiles \cite{MW} and Wiles \cite{wiles}.
\par 
It was Mazur \cite{M} who first pointed out a plan of extending the philosophy 
of the Iwasawa theory to higher dimensional motives like elliptic curves 
over number fields in early 70's. 
Since then, the Iwasawa theory was generalized to ordinary motives 
and it seems to be extended to Selmer groups of Galois deformation spaces 
constructed by the theory of Mazur, Hida, Coleman, etc. \par 
However, there is another direction of generalization called 
``non-commutative Iwasawa theory of elliptic curves" 
initiated by Coates.
\par 
Let us recall the classical (cyclotomic) Iwasawa theory for 
elliptic curves and its non-abelian generalization by Coates. 
Let $E$ be an elliptic curve over $K$. 
For any algebraic extension $L/K$, the Selmer group $\sel(E/L)$ 
is defined to be 
$$
\sel (E/L)=\Ker \left[ H^1_{\mrm{Gal}}(L,\Ep)\to\prod_v H^1_{\mrm{Gal}}
(L_v,E(\oline{L}_v))\right],
$$
where $v$ runs over all primes of $L$, $\Ep$ is the group of all 
$p$-power division points on $E$, and $H^1_{\mrm{Gal}}$ 
means the Galois cohomology. 
We denote by $X(E/L)$ the Pontrjagin dual 
$\mrm{Hom}_{\mrm{cont}}(\sel(E/L),\Q_p/\Z_p)$ 
of $\sel(E/L)$. For the cyclotomic Iwasawa theory for 
elliptic curves, we consider $X(E/K^{\mrm{cyc},(p)})$ 
which is a compact $\Z_p [[\Gamma^{(p)} ]]$-module where $\Gamma^{(p)} 
=\mrm{Gal}(K^{\mrm{cyc},(p)}/K) \cong \Z_p$. 
It is not difficult to show that $X(E/K^{\mrm{cyc},(p)})$ 
is a finitely generated $\Z_p [[\Gamma^{(p)}]]$-module 
(cf. \cite{M}, \cite{manin}). 
\begin{conj}\label{conj:mazur1}
Suppose that $E$ has good ordinary reduction at every primes 
of $K$ over $p$. Then 
$X(E/K^{\mrm{cyc},(p)})$ is a torsion $\Z_p [[\Gamma^{(p)}]]$-module.
\end{conj}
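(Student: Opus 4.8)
The plan is to follow the Euler system strategy, which is the only known route to torsion statements of this kind. Write $\Lambda := \Z_p[[\Gamma^{(p)}]]$, a regular domain of Krull dimension $2$; then $X(E/K^{\mathrm{cyc},(p)})$ being $\Lambda$-torsion is equivalent to the assertion that its characteristic ideal is nonzero, equivalently that $X(E/K^{\mathrm{cyc},(p)})\otimes_\Lambda \mathrm{Frac}(\Lambda)=0$. First I would install the descent machinery: because $E$ has good ordinary reduction at every place above $p$, the local conditions cutting out $\sel(E/-)$ behave well along the cyclotomic tower, so Mazur's control theorem applies and the restriction maps $\sel(E/K_n)\to\sel(E/K^{\mathrm{cyc},(p)})^{\Gamma_n}$ (with $K_n$ the $n$-th layer and $\Gamma_n\subset\Gamma^{(p)}$ the corresponding subgroup) have finite kernel and cokernel of order bounded independently of $n$. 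This reduces the problem to producing an upper bound, uniform in $n$, on the $\Z_p$-corank of $\sel(E/K_n)$, i.e.\ to bounding the dual Selmer module over $\Lambda$.

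The core input for such a bound is a norm-compatible system of global cohomology classes --- an Euler system --- for the Galois representation $T_p E$ in the cyclotomic direction. For $K=\Q$ this is Kato's Euler system constructed from Beilinson--Kato elements in $K_2$ of modular curves; for $K$ imaginary quadratic with $E$ having complex multiplication it is Rubin's Euler system of elliptic units. Feeding such a system into the Euler/Kolyvagin system machinery of Kato, Rubin and Mazur--Rubin bounds the Pontryagin dual of $\sel(E/K^{\mathrm{cyc},(p)})$ in terms of the ``bottom class'' of the system, and the final point is that this bottom class is nonzero: via Kato's explicit reciprocity law this amounts to the non-vanishing of the associated $p$-adic $L$-function as an element of $\Lambda$, which over $\Q$ follows from Rohrlich's theorem that $L(E,\chi,1)\neq 0$ for all but finitely many Dirichlet characters $\chi$ of $p$-power conductor. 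Combining the two estimates gives $\mathrm{char}_\Lambda\big(X(E/K^{\mathrm{cyc},(p)})\big)\neq 0$, as desired.

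The main obstacle is exactly the construction of the Euler system and the non-vanishing of its bottom class: for a general number field $K$ no adequate Euler system is presently known, which is why the statement is a theorem in the cases treated by Kato and by Rubin but remains conjectural in general. In the function-field situation that is the genuine subject of this paper, the analogous torsion statement will instead be attacked by replacing the Euler-system input with a cohomological one --- crystalline/syntomic cohomology of the relevant (semistable) model together with the geometry of its fibration over the base curve --- which is available uniformly, and this is the route developed in the sequel.
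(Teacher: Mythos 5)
This statement is Mazur's conjecture, labeled as such in the paper (Conjecture~\ref{conj:mazur1}); the paper supplies no proof of it and merely cites the known partial results in Remark~\ref{rem:supsing} --- Rubin's theorem for CM elliptic curves over $\Q$ via the Euler system of elliptic units, and Kato's theorem for a large class of non-CM elliptic curves over $\Q$ via the Beilinson--Kato Euler system. You correctly recognized this: your write-up is an accurate survey of the Euler-system strategy underlying those cases (control theorem to reduce to uniform corank bounds, Euler/Kolyvagin system input, non-vanishing of the bottom class via Rohrlich), and you honestly flag that for a general number field $K$ no adequate Euler system is available, so the statement remains open. There is therefore no proof in the paper to compare against, and your account matches what the paper itself says about this conjecture; you also correctly identify that the paper's actual contribution (Theorem~\ref{thm:cyc}) replaces the Euler-system input with syntomic/crystalline cohomology in the function-field setting, which is what makes the analogous torsion statement unconditional there.
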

Under this conjecture, the algebraic $p$-adic $L$-function 
$f^{\mrm{alg}}_E \in \Z_p [[\Gamma^{(p)}]]$ 
is defined to be the characteristic polynomial of 
$X(E/K^{\mrm{cyc},(p)})$ via the structure theorem 
of finitely generated $\Z_p [[\Gamma^{(p)}]]$-modules, 
which is roughly the product 
annihilators of the torsion $\Z_p [[\Gamma^{(p)}]]$-module 
$X(E/K^{\mrm{cyc},(p)})$ modulo its maximal 
pseudo-null $\Z_p [[\Gamma^{(p)}]]$-submodule. 
Here is the Iwasawa Main Conjecture in this case$:$ 
\begin{conj}[Mazur]\label{conj:mazur}
Suppose that $E$ has good ordinary reduction at every primes 
of $K$ over $p$. Assuming the conjecture $\ref{conj:mazur1}$, 
The algebraic $p$-adic $L$-function $f^{\mrm{alg}}_E$ 
is equal to the analytic $p$-adic $L$-function 
$f^{\mrm{anal}}_E \in \Z_p [[\Gamma^{(p)} ]]$ constructed by 
Mazur and Swinnerton-Dyer modulo multiplication by a unit of 
$\Z_p[[\Gamma^{(p)} ]]$. 
\end{conj}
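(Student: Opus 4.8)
\medskip
\noindent\textbf{Proof proposal.}
This statement is the Iwasawa Main Conjecture for $E$; I should note at the outset that it is proved only in special cases --- it is a theorem, under technical hypotheses, for $K=\Q$, by combining Kato's Euler system with the construction of Skinner--Urban, but is open for a general number field $K$. What follows is therefore the strategy that succeeds over $\Q$, together with the places where the general case must be confronted.

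First I would reduce the equality to two one-sided divisibilities. Write $\Lambda=\Z_p[[\Gamma^{(p)}]]$ and $X=X(E/K^{\mathrm{cyc},(p)})$. Granting Conjecture~\ref{conj:mazur1}, $X$ is a finitely generated torsion $\Lambda$-module, hence has a characteristic ideal $\mathrm{char}_\Lambda(X)=(f^{\mathrm{alg}}_E)$ that is insensitive to the maximal pseudo-null submodule; and $f^{\mathrm{anal}}_E$ is pinned down by interpolating the twisted special values $L(E/K,\chi,1)$ as $\chi$ runs over finite-order characters of $\Gamma^{(p)}$. Since $(g)=(h)$ in $\Lambda$ precisely when $g\mid h$ and $h\mid g$, it suffices to prove $f^{\mathrm{alg}}_E\mid f^{\mathrm{anal}}_E$ and $f^{\mathrm{anal}}_E\mid f^{\mathrm{alg}}_E$ separately; Mazur's control theorem is the bridge that translates such divisibilities of power series into statements about $\sel(E/K^{\mathrm{cyc},(p)})$ and the above $L$-values at finite layers.

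For the divisibility that bounds $X$ (equivalently the Selmer group) from above, the plan is to run the Euler system machine on an Euler system for the $p$-adic realization of $E$: over $\Q$ this is Kato's system of zeta elements built from Beilinson elements in $K_2$ of modular curves, and the machine then yields an upper bound for $\mathrm{char}_\Lambda(X)$ in terms of a certain image of the zeta element, which Kato's explicit reciprocity law identifies --- up to a unit and the predicted Euler factors --- with $f^{\mathrm{anal}}_E$, giving $f^{\mathrm{alg}}_E\mid f^{\mathrm{anal}}_E$. For the reverse divisibility, which must instead force $X$ to be large, the plan is the Eisenstein-ideal/congruence method of Mazur--Wiles and Wiles, in the form carried out for $\GL_2$ by Skinner--Urban: realize the Galois representation attached to $E$ inside the cohomology of a Shimura variety for a larger group (such as $\mathrm{GU}(2,2)$ or $\mathrm{GSp}_4$), use congruences between a suitable Klingen--Eisenstein series --- whose Eisenstein ideal is generated essentially by $f^{\mathrm{anal}}_E$ --- and genuine cusp forms to manufacture nonzero Galois cohomology classes lying in $\sel(E/K^{\mathrm{cyc},(p)})$, and conclude, via a lattice-and-irreducibility argument, that $f^{\mathrm{anal}}_E\mid f^{\mathrm{alg}}_E$.

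The hard part, and the reason the statement is only a conjecture, is that both inputs are at present conditional. Even for $K=\Q$ the congruence construction requires absolute irreducibility (or bigness) of $E[p]$ as a $\gal$-module, vanishing of the $\mu$-invariant of $f^{\mathrm{anal}}_E$, and mild ramification hypotheses, while the exceptional-zero case (a prime above $p$ of split multiplicative reduction) needs separate treatment. For a general base field $K$ neither an Euler system for $E$ nor a sufficiently general Eisenstein congruence is available, so without the corresponding generalizations of Kato's and Skinner--Urban's theorems the argument above proves the conjecture only conditionally. It is precisely a geometric analogue of this picture over a function field of characteristic $p$, where several of these obstructions become tractable, that the present paper pursues.
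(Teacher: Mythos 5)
This item is stated in the paper as a conjecture (Mazur's Iwasawa Main Conjecture), not as a theorem, and the paper offers no proof of it; it appears purely as background motivation for the function-field analogue developed later. You correctly recognized this, and your survey of the known strategy over $\Q$ --- Kato's Euler system for one divisibility, the Skinner--Urban Eisenstein-congruence method for the other, together with the hypotheses and the open status over general $K$ --- is accurate and consistent with what the paper itself records in Remark~\ref{rem:supsing} and the surrounding discussion of Rubin, Kato, and Skinner--Urban. There is no gap to flag, since there is no claimed proof to match.
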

\begin{rem}\label{rem:supsing}
\begin{enumerate}
\item 
When there is a prime lying over $p$ at which the elliptic curve 
$E$ has good supersingular reduction, it is known that 
$X(E/K^{\mrm{cyc},(p)})$ is not a torsion $\Z_p [[\Gamma^{(p)}]]$-module.
\item 
By Euler system method Rubin \cite{rubin} proves 
Conjecture $\ref{conj:mazur1}$ and Conjecture 
$\ref{conj:mazur}$ for elliptic curves over $\Q$ with complex multiplication 
and Kato \cite{kato} 
proves Conjecture $\ref{conj:mazur1}$ and the divisibility 
$f^{\mrm{alg}}_E 
\vert f^{\mrm{anal}}_E$ of Conjecture 
$\ref{conj:mazur}$ for large class of elliptic curves over $\Q$ without 
complex multiplication. 
\end{enumerate}
\end{rem}
Recently, Skinner-Urban announces to have shown 
Conjecture $\ref{conj:mazur}$ for large class of elliptic curves 
over $\Q$ without complex multiplication assuming the 
conjecture on the existence of Galois representations for the 
modular forms on $U(2,2)$.
\par 
From several reasons, Coates proposed to pursue the analogue of Conjecture 
$\ref{conj:mazur}$ for non-commutative $p$-adic Lie extension $L$ of $K$.
We recall two examples of such $p$-adic Lie extension $L$. 
\begin{enumerate}
\item 
Suppose that $K$ contains $p$-th root of unity $\zeta_p$. 
We take $L$ to be a Kummer extension of $K^{\mrm{cyc},(p)}=
\cup_{n\geq 1}K(\mu_{p^n})$ defined by 
$L=\cup_{m\geq 1}K^{\mrm{cyc},(p)}(\sqrt[p^m]{a})$ for a fixed element 
$a \in K$. The Galois extension of $L/K$ has a non-split extension:
$$
0 \lra \Z_p \lra \mrm{Gal}(L/K) \lra \Z_p \lra 0. 
$$
\item 
Let $E$ be an elliptic curve over $K$ with no complex multiplication 
over a number field $\oline{K}$. 
Let $L:=K (\Ep)$ be the field obtained by adjoining $\Ep$ to $K$. 
By Weil pairing, $L$ contains $K^{\mrm{cyc},(p)}$. 
By a result of Serre, $\mrm{Gal}(L/K)$ is 
identified with an open subgroup of $GL_2(\Z_p)\simeq \mrm{Aut}(\Ep)$.
\end{enumerate}
From now on through the paper, we will denote by 
$\La (G)$ the algebra $\Z_p [[G]]$ for any profinite group $G$.
For $G= \mrm{Gal}(L/K)$, $X(E/L)$ has a natural structure of a left 
$\La (G)$-module. 
The algebra $\La (G)$ is no more commutative, but have natural notion 
of ``torsion $\La (G)$-module". 
A non-commutative generalization of Conjecture $\ref{conj:mazur1}$
is known as follows: 
\begin{conj}\label{conj:mazur1noncomm}
Suppose that $E$ has good ordinary reduction at every primes 
of $K$ over $p$. Assume that $L$ contains $K^{\mrm{cyc},(p)}$. 
Then $X(E/L)$ is a torsion $\La (G)$-module.
\end{conj}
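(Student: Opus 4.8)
The plan is to deduce the statement from Conjecture \ref{conj:mazur1} by descent along $L/K^{\mrm{cyc},(p)}$. Set $H=\mrm{Gal}(L/K^{\mrm{cyc},(p)})$, so that
$$1\lra H\lra G\lra \Gamma^{(p)}\lra 1$$
is exact, $\La(G)$ is pro-free over $\La(H)$, and $\La(G)/I_H\La(G)\cong\La(\Gamma^{(p)})$, where $I_H\subset\La(H)$ is the augmentation ideal. As in the cyclotomic case, $X(E/L)$ is a finitely generated $\La(G)$-module (topological Nakayama applied to $H^1_{\mrm{Gal}}(L,\Ep)^\vee$), and the point is to show it is $\La(G)$-torsion. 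Assuming (as one may in the cases of interest) that $G$ has no $p$-torsion, so that $\La(G)$ has finite global dimension, a d\'evissage using $H_i(H,\La(G))=0$ for $i>0$ gives the Euler characteristic identity
$$\rank_{\La(G)}X(E/L)=\sum_{i\ge 0}(-1)^i\,\rank_{\La(\Gamma^{(p)})}H_i\bigl(H,X(E/L)\bigr).$$
So it is enough to prove that each $H_i(H,X(E/L))$ is $\La(\Gamma^{(p)})$-torsion, and --- granting the expected algebraic fact that a finitely generated $\La(G)$-module $M$ is $\La(G)$-torsion whenever $M_H=H_0(H,M)$ is $\La(\Gamma^{(p)})$-torsion --- only the term $i=0$ really matters.

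Let $S$ be the finite set of places of $K$ above $p\infty$ together with those of bad reduction of $E$. Since $L/K^{\mrm{cyc},(p)}$ is unramified outside $S$, there is an exact sequence $1\to G_S(L)\to G_S(K^{\mrm{cyc},(p)})\to H\to 1$, where $G_S(F)$ is the Galois group over $F$ of the maximal extension unramified outside $S$. Comparing the sequences defining $\sel(E/K^{\mrm{cyc},(p)})$ and $\sel(E/L)$ through the Hochschild--Serre spectral sequence for $H$ --- globally and at the finitely many places of $K^{\mrm{cyc},(p)}$ above $S$ --- the snake lemma produces an exact sequence
$$0\lra A\lra\sel(E/K^{\mrm{cyc},(p)})\lra\sel(E/L)^{H}\lra B,$$
with $A$ and $B$ assembled from the groups $H^{j}(H,\Ep(L))$ and $H^{j}(H_{w},E(\oline{L}_{w}))$ for $j=1,2$, the $H_w\subset H$ being decomposition groups. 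Because $H$ is a compact $p$-adic Lie group acting on modules that are cofinitely generated over $\Z_p$, and because the good ordinary reduction hypothesis lets one describe the local conditions at the places above $p$ via the ordinary filtration --- this being exactly the step that collapses at supersingular places, cf. Remark \ref{rem:supsing} --- the groups $A$ and $B$ are cofinitely generated over $\Z_p$. Dualizing, $H_0(H,X(E/L))$ and $X(E/K^{\mrm{cyc},(p)})$ differ only by finitely generated $\Z_p$-modules, and these are $\La(\Gamma^{(p)})$-torsion; so Conjecture \ref{conj:mazur1} forces $H_0(H,X(E/L))$ to be $\La(\Gamma^{(p)})$-torsion.

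The remaining, and hardest, point is to pass from this to the torsionness of $X(E/L)$ itself: topological Nakayama alone does not suffice, so one must either prove the algebraic statement invoked above or control all the higher $H_i(H,X(E/L))$ --- equivalently, control $H^{2}(G_S(L),\Ep)$ as a $\La(G)$-module --- by iterating the descent of the previous paragraph through the intermediate layers of $L/K^{\mrm{cyc},(p)}$ together with Poitou--Tate duality; this spectral-sequence bookkeeping is where I expect the main difficulty to lie. It disappears under the extra hypothesis $\mu\bigl(X(E/K^{\mrm{cyc},(p)})\bigr)=0$ (known in the complex multiplication case, conjectured in general): then $H_0(H,X(E/L))$ is finitely generated over $\Z_p$, hence $X(E/L)$ is finitely generated over $\La(H)$ by topological Nakayama over $\La(H)$, and a finitely generated $\La(H)$-module is automatically $\La(G)$-torsion since $[G:H]=\infty$.
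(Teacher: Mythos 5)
This statement is Conjecture \ref{conj:mazur1noncomm}, and the paper does not prove it --- it is stated as a conjecture precisely because it is open in general. The paper records it as background (attributed to the non-commutative Iwasawa theory program of Coates and collaborators) and then formulates a function-field analogue, Conjecture \ref{conj:ourconj}, for which it proves partial results (Theorems \ref{thm:cyc} and \ref{thm:isotriv}). So there is no ``paper's own proof'' to compare against; the right reading of your submission is as a strategy sketch for an open problem, not as a proof.

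As a sketch it is reasonable and follows the standard line of attack, but it contains the expected gap and one unproved algebraic claim. You correctly reduce, via Hochschild--Serre descent along $H=\mrm{Gal}(L/K^{\mrm{cyc},(p)})$ and the ordinary hypothesis at $p$, to showing that $H_0(H,X(E/L))$ is $\La(\Gamma^{(p)})$-torsion, and you correctly observe that this is controlled by Conjecture \ref{conj:mazur1}. But the pivotal assertion you ``grant'' --- that a finitely generated $\La(G)$-module $M$ is $\La(G)$-torsion whenever $M_H$ is $\La(\Gamma^{(p)})$-torsion --- is not a known general fact for non-commutative $G$; it is essentially equivalent to the thing one wants to prove, and in the non-commutative setting the correct and usable notion is membership in the category $\mathfrak{M}_H$ (i.e. $X/X(p)$ finitely generated over $\La(H)$), not bare $\La(G)$-torsionness. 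Your own last paragraph concedes that the passage from $H_0$ to all of $X$ is where the difficulty lies and that topological Nakayama alone does not close it. The one place where your argument does close is the $\mu=0$ case: there $H_0(H,X(E/L))$ is finitely generated over $\Z_p$, Nakayama over $\La(H)$ gives $X(E/L)$ finitely generated over $\La(H)$, and $\La(G)$-torsionness follows; this is the exact analogue of what the paper proves in the function-field setting as Theorem \ref{thm:isotriv} (via Lemma \ref{lem:Hcoinv} and the Balister--Howson lemma, Lemma \ref{lem:bh}). So: your write-up is a sound account of the known partial result, but it is not a proof of the conjecture, and the paper never claims one.
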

To have an algebraic $p$-adic $L$-function is a necessary and important 
step for the study of Iwasawa theory. 
As in the classical cyclotomic case where $G\cong \Z_p$, 
we hope that the algebraic $p$-adic $L$-function is associated 
to $X(E/L)$ once Conjecture $\ref{conj:mazur1noncomm}$ is true. 
However, it is not impossible to associate a characteristic polynomial 
to a torsion $\La (G)$-module when $G$ is non commutative. 
After rather negative results showing 
difficulties of non-commutative Iwasawa theory, \cite{CFTKWT} 
gave a convincing formalism of the Selmer group and the algebraic 
$p$-adic $L$-function inspired by the habilitation thesis 
of Venjakob at Heidelberg 
and based on calculation of K-groups of non-commutative 
Iwasawa algebras by Kato. Assuming that $G$ has a normal subgroup $H$ 
such that $G/H \cong \Z_p$, 
The important idea is that the $p$-adic $L$-function 
lives in a $K$-group $K_1 (\La (G)_{S^{\ast}}) /K_1 (\La (G))$, 
where $\La (G)_{S^{\ast}}$ is the localization 
of $\La (G)$ by a certain multiplicative set $S^{\ast}$.
  We refer the reader to \cite{CFTKWT} for how to choose $S^{\ast}$ 
and the basic results related to $S^{\ast}$. 
We remark that $S^{\ast}$ consists of every non-zero elements 
in $\La (G)$ in the classical case $G=\Z_p$ and we have 
$K_1 (\La (G)_{S^{\ast}}) /K_1 (\La (G)) \cong \mrm{Frac}(\La (G) )^{\times} 
/\La (G) ^{\times}$ in this case. 
Since the characteristic power series is taken 
in a representative element of $\mrm{Frac}(\La (G) )^{\times} 
/\La (G) ^{\times}$, it seems to be reasonable that 
the $p$-adic $L$-function should be constructed in 
$K_1 (\La (G)_{S^{\ast}}) /K_1 (\La (G))$. 
Under the existence of a closed subgroup $H$ with $G/H\cong \Z_p$, 
\cite{CFTKWT} gives an exact sequence: 
$$
K_1 (\La (G)) \lra K_1 (\La (G)_{S^{\ast}}) \lra K_0 
(\mathfrak{M}_H) , 
$$ 
where $\mathfrak{M}_H$ is the category of finitely generated 
$\La (G)$-modules $X$ such that $X/X(p)$ is finitely generated as 
$\La (H)$-module, where $X(p)$ means the maximal $p$-power torsion 
submodule of $X$. 
According to \cite{CFTKWT}, $\La (G)$-modules which belong to 
$\mathfrak{M}_H$ satisfy all necessary properties 
which $p$-adic $L$-functions should have, related to 
the characteristic power series, the evaluation at characters of $G$ 
and the Euler-Poincar\'{e} characteristic. 
Hence, in order to have an algebraic $p$-adic $L$-function, 
we need the following conjecture stronger than 
Conjecture $\ref{conj:mazur1noncomm}$: 
\begin{conj}\cite{CFTKWT} 
Assume that $E$ has ordinary reduction at every primes 
of $K$ over $p$. 
Then, $X(E/L)/X(E/L)(p)$ is a finitely generated $\La(H)$-module.\\
\end{conj}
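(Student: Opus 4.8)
\noindent The plan is to reduce the statement, by two successive applications of topological Nakayama, to the single assertion that $X(E/K^{\mrm{cyc},(p)})$ (the Pontrjagin dual of the cyclotomic Selmer group) is finitely generated over $\Z_p$. Write $H=\mrm{Gal}(L/K^{\mrm{cyc},(p)})$, so that $G/H\cong\Gamma^{(p)}\cong\Z_p$; set $Y:=X(E/L)$ and $\Omega_H:=\La(H)/p\La(H)$. After replacing $K$ by a suitable finite subextension of $L$ --- which changes neither the hypotheses nor the conclusion, as $L$ is unaffected and the new $H$ has finite index in the old one --- we may assume $H$ is pro-$p$, so that $\La(H)$ and $\Omega_H$ are Noetherian local rings with residue field $\F_p$; throughout I use that $Y$ is Noetherian over $\La(G)$, which is weaker than Conjecture $\ref{conj:mazur1noncomm}$ and holds in the cases at hand. \emph{Step 1.} It suffices to prove that $Y/pY$ is finitely generated over $\Omega_H$: since $Y(p)$ is $p$-power torsion, $Y/Y(p)$ is $p$-torsion free, so right exactness of $-\otimes_{\Z_p}\F_p$ applied to $0\to Y(p)\to Y\to Y/Y(p)\to 0$ realizes $(Y/Y(p))/p$ as a quotient of $Y/pY$, and topological Nakayama over the local ring $\La(H)$, applied to the compact module $Y/Y(p)$, then yields finite generation of $Y/Y(p)$ over $\La(H)$. \emph{Step 2.} By topological Nakayama over $\Omega_H$, and because $Y/pY$ is compact, $Y/pY$ is finitely generated over $\Omega_H$ if and only if the $H$-coinvariants $(Y/pY)_H=Y_H/pY_H$ are finite, equivalently if and only if the compact $\Z_p$-module $Y_H=X(E/L)_H$ is finitely generated over $\Z_p$.

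\emph{Step 3.} It remains to compare $X(E/L)_H$ with $X(E/K^{\mrm{cyc},(p)})$. Dualizing the fundamental diagram relating $\sel(E/L)$ to $\sel(E/K^{\mrm{cyc},(p)})$ --- the inflation--restriction sequence for $L/K^{\mrm{cyc},(p)}$ together with its local analogues at the places of $K^{\mrm{cyc},(p)}$ --- produces a homomorphism $X(E/L)_H\to X(E/K^{\mrm{cyc},(p)})$ whose kernel and cokernel are subquotients of the $\Z_p$-duals of the cohomology groups $H^i(H,E(L)\{p\})$ and of the local groups $H^i(H_w,\,\cdot\,)$ at the finitely many places of $K$ of bad reduction or lying over $p$. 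Because $H$ is a compact $p$-adic Lie group of finite cohomological dimension, and because under the ordinary hypothesis at the primes above $p$ the relevant local conditions are cofinitely generated over $\Z_p$, all these duals are finitely generated over $\Z_p$; hence $X(E/L)_H$ is finitely generated over $\Z_p$ exactly when $X(E/K^{\mrm{cyc},(p)})$ is.

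Combining Steps 1--3, the conjecture reduces to the single input that $X(E/K^{\mrm{cyc},(p)})$ is finitely generated over $\Z_p$ --- equivalently, that it is a torsion $\La(\Gamma^{(p)})$-module with vanishing $\mu$-invariant, so that along the way one also recovers Conjecture $\ref{conj:mazur1}$. This is where I expect the real difficulty to lie. Steps 1--3 are formal descent and Nakayama arguments whose only delicate ingredients --- Noetherianness of $Y$ over $\La(G)$, finiteness of the set of places of $K$ ramified in $L$, and cofinite generation of the local cohomology at the primes above $p$ --- are all supplied by the ordinary hypothesis; but the vanishing of the cyclotomic $\mu$-invariant is a genuinely arithmetic statement. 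In the classical number-field setting it is a Greenberg-type conjecture, proved for elliptic curves with complex multiplication and, via Kato's Euler system, for a large class of modular curves, and it is invariant under isogeny, so one is free to replace $E$ by a convenient isogenous curve first. In the geometric situation studied in this paper one would instead hope to establish it directly, exploiting the syntomic and flat-cohomological description of the Selmer group peculiar to characteristic $p$, together with ordinarity, in place of the Euler-system input that is not available here.
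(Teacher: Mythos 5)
You are attempting to prove something the paper does not prove: the statement you were given is Conjecture~\cite{CFTKWT}, reproduced from Coates--Fukaya--Kato--Sujatha--Venjakob as motivation for the function-field analogue the paper actually studies, and the paper supplies no proof of it. So there is no ``paper's own proof'' to compare against, and your write-up cannot close either, because it is a reduction rather than a proof.

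The reduction itself is essentially sound. Replacing $K$ by a finite subextension so that $H$ becomes pro-$p$ is harmless; your two applications of topological Nakayama (over $\La(H)$ and over $\Omega_H$) are correct granted that $Y(p)$ is closed in $Y$, which holds because $Y$ is assumed finitely generated over the Noetherian ring $\La(G)$, forcing $Y(p)=Y[p^n]$ for some $n$; and the descent from $X(E/L)_H$ to $X(E/K^{\mrm{cyc},(p)})$ via Hochschild--Serre and the local analysis at the ordinary primes above $p$ is the standard argument from the non-commutative literature. But the target you land on --- $X(E/K^{\mrm{cyc},(p)})$ finitely generated over $\Z_p$, i.e.\ Conjecture~\ref{conj:mazur1} together with vanishing of the cyclotomic $\mu$-invariant --- is itself a well-known open conjecture even for elliptic curves over $\Q$ (it is expected, and known in CM and some non-CM cases by Kato's methods, but unproved in general). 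You say so yourself at the end, so you are not claiming a proof, but the reader should be clear that the gap is not a detail: it is the entire content.

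What is worth noticing is that your Steps 1--3 are precisely the skeleton of the paper's own Theorem~\ref{thm:isotriv} in Section~\ref{section:isotriv}, transplanted to the number-field setting. There, Lemma~\ref{lem:Hcoinv} plays the role of your Step~3 (control of $X(A/L)_H$ via an auxiliary Selmer group $\sel^{S}$, Hochschild--Serre, and the ordinary hypothesis at the ramified primes), and the Balister--Howson lemma (Lemma~\ref{lem:bh}) packages your two Nakayama steps into one; the input $\mu(X(A/\fpinf))=0$ is exactly your ``$X(E/K^{\mrm{cyc},(p)})$ finitely generated over $\Z_p$''. The crucial difference is that in the geometric situation the paper can actually \emph{prove} the $\mu=0$ input under concrete reduction hypotheses (Theorem~\ref{thm:cycmu}), using the syntomic/flat-cohomology description of the Selmer group, whereas over a number field no such route is available. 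Moreover the paper's conclusion is slightly stronger than yours: with $\mu=0$ in hand one gets $X(A/L)$ itself, not merely $X/X(p)$, finitely generated over $\La(H)$. So your instinct at the end --- that the right move in characteristic $p$ is to attack $\mu=0$ directly via syntomic/flat cohomology rather than via Euler systems --- is exactly what the paper does, in Sections~2 and~3.
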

\par 
In this paper, we intend to discuss an analogue of the commutative 
and non-commutative Iwasawa theory in the case of abelian varieties 
over a function field $F$ in one variable over a finite field $\mbb{F}_q$.  
Let $A/F$ be an abelian variety. 
For any algebraic extension $K/F$, the Selmer group of $E/K$ 
in this context is by definition 
\begin{equation}\label{equation:seldef}
\sel (A/K):=\Ker \left[ \Hfl (K,A\{p\})\lra \prod_v \Hfl (K_v,A) \right],
\end{equation}
where $A\{p\}$ denotes the $p$-divisible group associated to $A$, 
$H^1_{\mrm{fl}}$ denotes the flat cohomology and $v$ runs all primes of $K$. 
\begin{enumerate}
\item 
Let $F_{\infty} :=F \overline{\mbb{F}}_q$, 
for an algebraic closure $\overline{\mbb{F}}_q$ of $\F_q$. 
The Galois group $F_{\infty}/F$ is isomorphic to $\widehat{\Z}$. 
We will define $F^{(p)}_{\infty} $ to be $F^{(p)}_{\infty} =F \mbb{F}^{(p)}_q$, where 
$\mbb{F}^{(p)}_q$ is the unique subfield of $\oline{\mbb{F}}_q$ 
such that $\mrm{Gal}(\mbb{F}^{(p)}_q /\mbb{F}_q)$ is isomorphic to $\Z_p$. 
\item 
Let $F^{\mrm{sep}}$ be a fixed separable closure of $F$. 
It is known that the maximal pro-$p$ quotient of 
$\mrm{Gal}(F^{\mrm{sep}}/F)$ is a free pro-$p$ group of infinite rank.
Hence any topologically finitely generated pro-$p$ group $G$ is 
realized as a Galois group $\mrm{Gal}(L/F)$ of certain Galois extension. 
\end{enumerate}
Now we will state below our result in this paper. 
We denote by $X(A/L)$ the Pontrjagin dual of $\sel (A/L)$, which 
has a natural structure of left $\La (G)$-module.  
We define $Y(A/L)$ to be $X(A/L)/X(A/L)(p)$, which is again 
a left $\La (G)$-module.\\ 
We propose the following conjecture: 
\begin{conj}\label{conj:ourconj}
Let $A$ be an abelian variety over $F$ with arbitrary reduction. 
Let $L$ be a Galois extension of $F$ containing $F^{(p)}_{\infty}$ 
whose Galois group $G=\mrm{Gal}(L/F)$ is a finitely generated 
torsion-free pro-$p$ group. We have the following conjecture. 
\begin{enumerate}
\item 
When $L$ is equal to $F^{(p)}_{\infty}$, $X(A/L)$ is a finitely generated 
torsion $\La (G)$-module. 
\item 
When $L$ is an extension containing $F^{(p)}_{\infty}$ such that 
$\mrm{Gal}(L/F)$ is a $p$-adic Lie extension, 
$Y(A/L)$ is a finitely generated $\La (H)$-module where 
$H=\mrm{Gal}(L/F^{(p)}_{\infty} )$. 
\end{enumerate}
\end{conj}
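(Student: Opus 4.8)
I would prove the two assertions in parallel, in each case reducing to a finiteness statement at a finite layer via a control theorem and then invoking topological Nakayama; the characteristic-$p$ novelty is that the delicate torsion/finiteness input, which over a number field requires an Euler system and fails at supersingular primes, will here be supplied instead by the finite generation of the log-crystalline (syntomic) cohomology of the N\'eron model, and is therefore insensitive to the reduction type.

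For (1), put $\Gamma^{(p)}=\mrm{Gal}(F^{(p)}_{\infty}/F)\cong\Z_p$ and let $F_n\subset F^{(p)}_{\infty}$ be the layer with $[F_n:F]=p^n$. The first step is a control theorem comparing $\sel(A/F_n)$ with $\sel(A/F^{(p)}_{\infty})^{\mrm{Gal}(F^{(p)}_{\infty}/F_n)}$: because $F^{(p)}_{\infty}/F$ is everywhere unramified, the kernel and cokernel are assembled from local flat cohomology groups $\Hfl(K_v,A)$ that are governed by the component groups of the N\'eron model of $A$, hence finite uniformly in $n$. Combined with the finiteness of $\mrm{Sha}(A/F)[p^{\infty}]$ (assumed, or taken from the cases where it is known), this shows that $X(A/F^{(p)}_{\infty})_{\Gamma^{(p)}}$ is finitely generated over $\Z_p$, whence $X(A/F^{(p)}_{\infty})$ is finitely generated over $\La(\Gamma^{(p)})$ by Nakayama. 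For torsionness the point is that ascending the \emph{constant-field} tower is, at the level of the $p$-divisible group $A\{p\}$, base change along $\mbb{F}_q\hookrightarrow\mbb{F}_q^{(p)}$; identifying $\Hfl(-,A\{p\})$ with a syntomic/log-crystalline cohomology group of the N\'eron model of $A$ over the curve, the topological generator of $\Gamma^{(p)}$ acts through Frobenius on a finitely generated $\Z_p$-module, so $X(A/F^{(p)}_{\infty})$ is in fact finitely generated over $\Z_p$ and a fortiori a torsion $\La(\Gamma^{(p)})$-module. Equivalently, the $\Z_p$-corank of $\sel(A/F_n)$ stays bounded in $n$, the only possible source of unbounded growth being the trace of $A$, whose contribution is $p$-adically bounded in a constant field extension by Lang--N\'eron.

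For (2), set $H=\mrm{Gal}(L/F^{(p)}_{\infty})$, so that $G/H\cong\Gamma^{(p)}$. Since $L/F^{(p)}_{\infty}$ is a $p$-adic Lie extension, a control theorem comparing $\sel(A/F^{(p)}_{\infty})$ with $\sel(A/L)^{H}$ (again with finite error terms coming from unramified local cohomology), together with the Hochschild--Serre homology spectral sequence for $H$, expresses $H_i(H,X(A/L))$ in terms of $X(A/F^{(p)}_{\infty})$. By part (1), $X(A/F^{(p)}_{\infty})$ — hence $Y(A/F^{(p)}_{\infty})$ — is a finitely generated torsion $\La(\Gamma^{(p)})$-module. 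Following the formalism of \cite{CFTKWT}, to conclude that $Y(A/L)$ is finitely generated over $\La(H)$ it suffices, by topological Nakayama over the local ring $\La(H)$, to show that $Y(A/L)/\mathfrak{m}_{\La(H)}Y(A/L)$ is finite; via the control theorem and the snake lemma this reduces to the finiteness of the non-$p$-divisible quotient of $\sel(A/F^{(p)}_{\infty})$, which is precisely what part (1) provides, the $p$-divisible part being annihilated in the passage from $X$ to $Y$.

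The main obstacle is the torsionness in (1): bounding the $\Z_p$-corank of the Selmer groups along the constant-field tower, together with the finiteness of the $p$-primary part of $\mrm{Sha}$. Over a number field this is the deep input furnished by the Euler systems of Kato and Rubin; in characteristic $p$ no such Euler system is available, so the bound must be extracted from the finite-dimensionality of the coherent / log-crystalline cohomology of the N\'eron model, and the technical heart of the matter is to make the comparison between $\Hfl(-,A\{p\})$ and log-crystalline cohomology compatible simultaneously with the $\La$-module structures and with the local conditions defining the Selmer group. Handling the bad-reduction places, where $A\{p\}$ does not extend to a finite flat group scheme over the curve and one must work with the N\'eron model and its component groups, is the most delicate part, as is keeping all the control-theorem error terms finite.
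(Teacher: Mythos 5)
There is a genuine and decisive gap. You claim midway that ``$X(A/F^{(p)}_{\infty})$ is in fact finitely generated over $\Z_p$'', i.e.\ that $\mu=0$, but this is false in general and is contradicted by the paper's own Theorem~\ref{thm:cycmu}: for an isotrivial abelian variety with supersingular reduction, $\mu(X(A/\fpinf))>0$ unless the Hasse--Witt matrix of (a cover of) the curve is invertible. The conjecture only asserts torsion-ness over $\La(\Gamma^{(p)})$, and the paper proves exactly that (Theorem~\ref{thm:cyc}), not $\Z_p$-finiteness. Your mechanism --- bounding the $\Z_p$-corank of $\sel(A/F_n)$ via Lang--N\'eron and finiteness of $\mathrm{Sha}(A/F)[p^\infty]$ --- therefore cannot work: when $\mu>0$ those coranks grow without bound, and in any case finiteness of $\mathrm{Sha}[p^\infty]$ is neither known nor used anywhere in the paper. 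The paper's actual route is structurally different: it first works over the \emph{full} constant-field extension $\finf=F\oline{\F}_q$, identifies the Selmer dual with a syntomic cohomology group of a (log) Dieudonn\'e crystal over $C_{\finf}^\sharp/W$ (Proposition~\ref{prop-section2}), uses finiteness of these groups as $W(\oline{\F}_q)$-modules (Lemma~\ref{lem1-section2}, Tsuji), and then shows that the $\Gamma^{(p')}$-coinvariants of their $p$-power torsion parts are finitely generated torsion over $\La(\Gamma^{(p)})$ via an explicit structural analysis of $\La(\Gamma)$-submodules and quotients of $W_{n}(\oline{\F}_q)$ (Claim~\ref{claim:final}). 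There is no control theorem over finite layers $F_n$ and no appeal to Sha.

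For part (2), your outline (control theorem for $\sel(A/L)^H$, then Nakayama over $\La(H)$) matches the skeleton of the paper's \S\ref{section:isotriv}, but it omits two essential hypotheses without which the argument breaks. First, Theorem~\ref{thm:isotriv} \emph{assumes} $\mu(X(A/\fpinf))=0$; the paper does not claim to derive this, and as noted above it can fail. Second, the paper requires ordinary reduction at each ramified prime $v\in S$: this is what makes the modified Selmer group $\sel^S$ of $(\ref{def:selgr})$ comparable to $\sel$ (Lemma~\ref{lem:selcomp}, using the connected--\'etale sequence of $A\{p\}$ and surjectivity of $\widehat{\mc{A}}_v(\mc{O}_v)\otimes\Q_p/\Z_p\to A(K_v)\otimes\Q_p/\Z_p$). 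Your sketch ignores both. Finally, a small but real conceptual slip: the paper defines $X(p)$ as the maximal $p$-power \emph{torsion} submodule and sets $Y=X/X(p)$; you describe the passage from $X$ to $Y$ as ``annihilating the $p$-divisible part,'' which is the wrong side of the dichotomy and obscures why the $\mu=0$ hypothesis is still needed even after passing to $Y$.
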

Our main results over $\fpinf$ are as follows: 
\begin{thm}\label{thm:cyc} 
Let $A$ be an abelian variety with arbitrary reduction. 
Then, $X(A/F^{(p)}_{\infty} )$ is a finitely generated torsion 
$\La (G )$-module for $G =\mrm{Gal}(F^{(p)}_{\infty} /F)$.\\
\end{thm}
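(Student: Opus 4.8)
The plan is to realize $\sel(A/\fpinf)$ through the flat cohomology of a Néron model over a curve, deduce finite generation over $\La(G)$ from a control argument plus the topological Nakayama lemma, and obtain torsionness by computing the $\La(G)$-rank of $X(A/\fpinf)$ as a limit of normalized $\Z_p$-coranks along the constant field tower and checking that it vanishes. Concretely, let $C/\F_q$ be the smooth projective model of $F$, let $\mathcal{A}/C$ be the Néron model of $A$, and for $n\ge 0$ put $F_n=F\F_{q^{p^n}}$, $C_n=C\times_{\F_q}\F_{q^{p^n}}$, with base change $\mathcal{A}_n/C_n$; then $\fpinf=\bigcup_n F_n$, $G=\mrm{Gal}(\fpinf/F)\cong\Z_p$, and the open subgroups are $\Gamma_n=\mrm{Gal}(\fpinf/F_n)$. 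Working with the finite flat group schemes $\mathcal{A}_n[p^m]$, the Kummer sequences relating them to $\mathcal{A}_n$, the Néron mapping property, and the local computation identifying the Selmer local conditions with the classes unramified with respect to $\mathcal{A}_n$ at each closed point, one produces for each $n$ a homomorphism $H^1_{\mrm{fl}}(C_n,\mathcal{A}_n\{p\})=\varinjlim_m H^1_{\mrm{fl}}(C_n,\mathcal{A}_n[p^m])\to\sel(A/F_n)$ with finite kernel and cokernel, the error supported at the finitely many closed points of bad reduction (component groups). Passing to the limit over $n$ presents $\sel(A/\fpinf)$, up to the same controlled error, as the discrete $\La(G)$-module $H^1_{\mrm{fl}}(C^{(p)},\mathcal{A}\{p\})$ with $C^{(p)}=C\times_{\F_q}\F^{(p)}_q$.

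\emph{Finite generation over $\La(G)$.} Each $H^i_{\mrm{fl}}(C_n,\mathcal{A}_n[p^m])$ is finite (flat cohomology of a finite flat group scheme over a projective curve over a finite field), so $\sel(A/F)[p^m]$ is finite for every $m$ and $\sel(A/F)$ is cofinitely generated over $\Z_p$. An inflation--restriction (control) argument for the $\Z_p$-extension $\fpinf/F$ shows that the kernel and cokernel of $\sel(A/F)\to\sel(A/\fpinf)^{G}$ are assembled from the cohomology of $G$ acting on the finitely many nonzero local contributions, and these are themselves cofinitely generated over $\Z_p$ because the local decomposition groups act through Frobenius; hence $\sel(A/\fpinf)^{G}$ is cofinitely generated over $\Z_p$. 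Dually, $X(A/\fpinf)_{G}$ is a finitely generated $\Z_p$-module, so $X(A/\fpinf)/\mathfrak{m}X(A/\fpinf)$ is finite for the maximal ideal $\mathfrak{m}$ of the compact local ring $\La(G)=\Z_p[[G]]$, and the topological Nakayama lemma gives that $X(A/\fpinf)$ is a finitely generated $\La(G)$-module.

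\emph{Torsionness.} Since $\La(G)\cong\Z_p[[T]]$, a finitely generated $\La(G)$-module $M$ is $\La(G)$-torsion if and only if $M\otimes_{\Z_p}\Q_p$ is finite dimensional over $\Q_p$; and from the control statement applied along the tower, together with $[\F_{q^{p^n}}:\F_q]=p^n$, one has $\mrm{rank}_{\La(G)}X(A/\fpinf)=\lim_{n\to\infty}p^{-n}\,\mrm{corank}_{\Z_p}\sel(A/F_n)$. It thus suffices to show that $\mrm{corank}_{\Z_p}\sel(A/F_n)$ is bounded independently of $n$. Now this corank equals $\mrm{rank}_{\Z}A(F_n)$ plus the $\Z_p$-corank of the $p$-primary part of the Tate--Shafarevich group of $A/F_n$. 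The first term is uniformly bounded: by the Lang--Néron theorem over the constant extension $F_\infty/\bF_q$ the trace of $A$ contributes only the (torsion) group of $\bF_q$-points of an abelian variety, so $A(F_\infty)$ has finite free rank and $\mrm{rank}_{\Z}A(F_n)\le\mrm{rank}_{\Z}A(F_\infty)<\infty$ for all $n$. For the second term, via the Néron-model description of the previous paragraph and Milne's flat duality on $C_n$, the $\Z_p$-corank of the divisible part of $H^1_{\mrm{fl}}(C_n,\mathcal{A}_n\{p\})$ is bounded by the dimension over $\mrm{Frac}\,W(\F_{q^{p^n}})$ of the crystalline cohomology over $C_n$ of the Dieudonné crystal of $\mathcal{A}_n\{p\}$ --- equivalently, by the degree of the $L$-polynomial $L(A/F_n,T)=\det(1-T\cdot\mrm{Frob}_q^{p^n}\mid H^1)$ --- and since neither the underlying cohomology space nor the genus of $C_n$ changes under a finite constant field extension, this bound does not depend on $n$. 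Therefore $\mrm{rank}_{\La(G)}X(A/\fpinf)=0$, which is the assertion.

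\emph{Where the difficulty lies.} The delicate step is the reduction to flat cohomology carried out honestly in characteristic $p$: the torsion subschemes $\mathcal{A}_n[p^m]$ are non-étale, so one is forced to work throughout with fppf cohomology and Milne's flat duality on the curve rather than Galois cohomology, and one must pin down precisely the finite discrepancy between the Selmer group and the flat cohomology of the Néron model at the bad fibres, where wild ramification and the component groups intervene. The only other genuinely non-formal ingredient is the uniform bound on the corank of the Tate--Shafarevich term, which uses $p$-adic (crystalline / de Rham--Witt) cohomology over $C_n$ rather than the limit formalism alone.
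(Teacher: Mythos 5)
Your proposal takes a genuinely different route from the paper. The paper never passes through the finite layers $F_n$ at all: after reducing to the semi-stable case (Lemma~\ref{lem:ssred}), it works entirely over $F_\infty=F\overline{\mathbb F}_q$ with the Kato--Trihan syntomic complex, analyzes the $\Lambda(\Gamma)$-module structure of the crystalline cohomology groups (which are finitely generated $W(\overline{\mathbb F}_q)$-modules), and then obtains torsionness over $\Lambda(\Gamma^{(p)})$ by the explicit structure result on the $\Gamma^{(p')}$-coinvariants of $W(\overline{\mathbb F}_q)$-modules (Claim~\ref{claim:final}, based on the isomorphism $W(\overline{\mathbb F}_q)\cong(\varprojlim_n\mathbb Z/p^n[\Gamma^\vee])^{\oplus\operatorname{ord}_p q}$), finally descending to $\fpinf$ via crystalline base change and the commutative diagrams at the end of \S\ref{section:genord}. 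You instead stay inside the $\mathbb Z_p$-tower $F_n/F$, use a control theorem together with the topological Nakayama lemma for finite generation, and compute $\operatorname{rank}_{\Lambda(G)}X$ as $\lim p^{-n}\operatorname{corank}_{\mathbb Z_p}\operatorname{Sel}(A/F_n)$. Both strategies ultimately rest on the same nonformal input — a uniform $p$-adic bound on the ``divisible part'' of the cohomology coming from the Dieudonn\'e crystal of $\mathcal A$ — but the paper packages it once over $F_\infty$ via Lemma~\ref{lem2-section2} and Corollary~\ref{cor-section2}, while you redistribute it level by level.

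Two places in your sketch need to be tightened if this is to become a proof. First, the corank bound: you cite the degree of the $L$-polynomial, which is the dimension over $\operatorname{Frac}W(\mathbb F_{q^{p^n}})$, but the object you must control is $\dim_{\mathbb Q_p}\operatorname{Ker}(1-\varphi)$ for the $\sigma$-semilinear ($p$-power) Frobenius $\varphi$; it is only the semilinearity (via Dieudonn\'e--Manin, the unit-root slope decomposition and the fact that the unit-root subcrystal gives an \'etale $\mathbb Q_p$-representation of rank $\le\operatorname{rank}_W$) that makes this a $\mathbb Q_p$-dimension bounded by the $W$-rank uniformly in $n$. As written you conflate the geometric ($q^{p^n}$-power, linear) Frobenius with the arithmetic one, and the linear operator's kernel would be a $W(\mathbb F_{q^{p^n}})[1/p]$-subspace whose $\mathbb Q_p$-dimension can grow with $n$; the distinction is exactly what makes the bound work, and it must be made explicit. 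Second, for your Nakayama step the control statement has to be verified without any ordinarity hypothesis; it does hold here, but only because $\fpinf/F$ is an everywhere unramified (constant-field) extension, so the local error terms are decomposition-group cohomologies of $\hat{\mathbb Z}$ or $\mathbb Z_p$ acting through Frobenius, which are cofinitely generated over $\mathbb Z_p$ — in contrast to the number-field cyclotomic tower, where ramification at supersingular $p$ is precisely what breaks control. You allude to this but should state it as the reason the argument does not break at supersingular or bad fibers. Finally, note that a uniform bound on $\operatorname{corank}_{\mathbb Z_p}\operatorname{Sel}(A/F_n)$ is compatible with (and does not contradict) the positive $\mu$-invariants exhibited in Theorem~\ref{thm:cycmu}: the $\mu$-part of $X$ contributes to the size of the $p$-torsion of $X_{\Gamma_n}$, not to its $\mathbb Z_p$-rank, so boundedness of the corank is exactly equivalent to $\Lambda$-torsionness with no constraint on $\mu$.
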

Theorem $\ref{thm:cyc}$ will be proved in \S $3$. 
We insist that the independence of the reduction of $A$ for 
$X(A/F^{(p)}_{\infty} )$ to be torsion over $\La (G )$ 
is a different phenomenon when we compare our situation 
to that of the case of abelian varieties over the cyclotomic tower 
$K^{\mrm{cyc}, (p)}$ of a number field $K$ 
(cf. Conjecture $\ref{conj:mazur1}$). 
\par 
For a torsion $\La (G )$-module $M$ with $G \cong \Z_p$, 
we denote by $\mu (M)$ the length of $M_{(p)}$ over $\La (G)_{(p)}$, 
where $(\ \ )_{(p)}$ means the localization at the height-one 
prime ideal $(p)$. 
The invariant $\mu (M)$ is called the $\mu$-invariant of $M$ and $\mu (M)$ is 
an important numerical invariant of $M$. 
It is known that $\mu (X (A/K^{\mrm{cyc}, (p)}))$ is 
strongly dependent of the mod-$p$ representation of $A$. 
As the following result shows, the the $\mu$-invariant 
$\mu (X (A/\fpinf ))$ for an abelian variety $A$ over a function 
field $F$ seems to be strongly influenced by 
the reduction type of $A$. 
\begin{thm}\label{thm:cycmu}
\begin{enumerate}
\item 
Assume that there exists a finite separable extension $F'$ of $F$ 
which satisfies one of the following conditions: 
\vspace*{5pt} \ 
\\ 
{\bf (OF)} $A\times_F F'$ is isomorphic to an ordinary abelian variety 
defined over a finite field.  
\\ 
{\bf (SF)} $A\times_F F'$ is isomorphic to a supersingular abelian variety 
defined over a finite field and the proper smooth curve $C_{F'}$ which is 
the model of $F'$ has invertible Hasse-Witt matrix.
\vspace*{5pt}
\\ 
Then, we have $\mu (X(A/\fpinf ))=0$. 
\item 
Let $A$ be an isotrivial abelian variety 
having everywhere supersingular reduction.  
We have $\mu (X(A/\fpinf ))=0$ if and only if 
there exists a finite extension $F'/F$ such that 
the Hasse-Witt matrix for $F'$ is invertible and that 
$A\times_F F'$ is isomorphic to an abelian variety 
defined over a finite field. 
\end{enumerate}
\end{thm}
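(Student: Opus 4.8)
The plan is to understand $\mu(X(A/\fpinf))$ via flat/syncomic cohomology over the curve and reduce the computation of the $\mu$-invariant to a statement about the Hasse--Witt operation on the differentials of the relevant $p$-divisible group. Concretely, write $C$ for the proper smooth model of $F$ over $\F_q$, and let $\mathcal{A} \to C$ be the N\'eron model of $A$; in the situations of the theorem $A$ becomes, after the finite base change $F'/F$, an abelian variety over a finite field, so after passing to $F'$ we may assume $\mathcal{A}$ is (an open part of) a constant abelian scheme. Using the descent/control lemmas of \S2--\S3 (which give that $\mu(X(A/\fpinf))$ and $\mu(X(A\times_F F'/{F'}^{(p)}_\infty))$ agree up to a bounded error coming from the finitely many places of $F'/F$ and from $H^0$-terms which are finite), it suffices to treat the constant case over $F'$. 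There the Selmer group over $\fpinf$ is computed by the cohomology of the $p$-divisible group $A\{p\}$ over the curve $C_{F'}\times_{\F_q}\F^{(p)}_q$, and $\mu$-invariant vanishing is equivalent to the statement that a certain $\La(G)$-module built from $H^*_{\mathrm{fl}}$ (or from the Dieudonn\'e/syntomic realization) has no $p$-torsion up to pseudo-null, i.e. that a Frobenius-semilinear operator on an $\F_q$-vector space (the Hasse--Witt matrix of $C_{F'}$, twisted by the Dieudonn\'e module of $A\{p\}$ over $\F_q$) is bijective.

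For part (1): in case {\bf (OF)}, the $p$-divisible group of the ordinary abelian variety over the finite field splits (up to isogeny, after a further finite extension which does not change $\mu$) into a toric-type part $\mu_{p^\infty}^{\oplus g}$ and an \'etale part $(\Q_p/\Z_p)^{\oplus g}$. The \'etale part contributes an unramified $\Z_p$-sheaf, whose cohomology over $C_{F'}\times \F^{(p)}_q$ is, by a standard argument (the Frobenius on $H^1$ of a curve over a finite field acting on an unramified sheaf), a finitely generated $\Z_p$-module, hence has $\mu=0$; the multiplicative part is Cartier-dual to this and contributes dually, again with $\mu=0$. So $\mu(X(A/\fpinf))=0$. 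In case {\bf (SF)} the $p$-divisible group is local-local, and its contribution to $X$ involves $H^1_{\mathrm{fl}}(C_{F'},A\{p\})$, which via the exact sequence $0\to A[p]\to A\{p\}\xrightarrow{p} A\{p\}\to 0$ and the identification of $H^1_{\mathrm{fl}}(C_{F'},A[p])$-type groups with the cokernel of $1-\mathrm{Frob}$ on $H^1(C_{F'},\omega)$-type data, reduces exactly to the invertibility of the Hasse--Witt matrix of $C_{F'}$: when that matrix is invertible, the operator $1-F$ is bijective and the corresponding module is finite, forcing $\mu=0$. The bookkeeping to make this precise is: identify the $\La(G)$-module structure, observe $\La(G)\cong\Z_p[[T]]$ with $(p)$ the height-one prime, and show the module in question is finitely generated over $\Z_p$ (not just over $\Z_p[[T]]$) exactly under the invertibility hypothesis.

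For part (2), the isotrivial everywhere-supersingular case: the ``if'' direction is part (1){\bf (SF)} applied to the $F'$ provided. For ``only if'', I would argue contrapositively. If no finite extension $F'$ makes $A$ constant \emph{and} has invertible Hasse--Witt matrix, then either $A$ never becomes constant --- but $A$ is isotrivial, so by definition some finite extension makes it constant, so this case does not occur --- or for \emph{every} finite $F'$ over which $A$ becomes constant, the Hasse--Witt matrix of $C_{F'}$ is degenerate. One then shows that degeneracy of the Hasse--Witt matrix of $C_{F'}$ forces a nonzero contribution to the $p$-torsion of $X$ that survives in the localization at $(p)$: the kernel of the Hasse--Witt operator on $H^1(C_{F'},\cO)$ produces, after tensoring with the Dieudonn\'e module of the supersingular $A\{p\}$, a free $\F_q[[T]]$-submodule of $X(A/\fpinf)\otimes\F_q$ of positive rank, hence $\mu>0$; and this positivity is stable under the finite extensions appearing, so $\mu(X(A/\fpinf))\ne 0$.

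The main obstacle I expect is the precise comparison step: relating the $\mu$-invariant of the Pontryagin dual of the flat Selmer group over $\fpinf$ to the linear-algebra datum (Dieudonn\'e module of $A\{p\}$ over $\F_q$ together with the Hasse--Witt operator of the curve), keeping exact track of the finite error terms under the base change $F'/F$ (ramified places, components of the N\'eron model, $H^0$ contributions) and checking that none of these can change whether $\mu$ is zero. This is where one must use the control theorem of \S2 and the structure theory of $\La(G)=\Z_p[[T]]$-modules carefully; the "only if" direction in (2) additionally requires knowing that a \emph{positive} $\mu$-contribution cannot be cancelled, which needs the contribution to be genuinely free over $\F_q[[T]]$ rather than merely $p$-torsion.
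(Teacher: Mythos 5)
Your high-level plan (reduce by Lemma~\ref{pro:mainiso} to the case of a constant abelian variety over some $F'$, then split $A\{p\}$ according to reduction type and relate the resulting flat cohomology over the curve to a semilinear Frobenius) is the same skeleton as the paper's. However, the unifying principle you announce at the outset --- that $\mu$-vanishing is ``equivalent to'' bijectivity of the Hasse--Witt operator twisted by the Dieudonn\'e module of $A\{p\}$ --- is wrong in the ordinary case, and this mischaracterization propagates through the rest of the sketch. In case \textbf{(OF)} the paper proves $\mu=0$ with \emph{no} Hasse--Witt hypothesis whatsoever. The connected part $A\{p\}^{\mathrm{conn}}\cong(\mu_{p^\infty})^g$ is handled via the Kummer sequence, identifying $H^1_{\mathrm{fl}}(C_{\finf},\mu_{p^\infty})$ with the $p$-primary part of $\mathrm{Cl}(F_\infty)$, hence with $J_{\finf}(\oline{\F}_p)\{p\}\cong(\Q_p/\Z_p)^r$; the \'etale part $(\Q_p/\Z_p)^g$ is handled via the Artin--Schreier sequence and finiteness of $H^1_{\mathrm{Zar}}(C_{\finf},\mathcal{O})$. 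Neither argument requires, nor even references, invertibility of Hasse--Witt, and the toric part is not simply ``dual'' to the \'etale part in the proof --- the two computations are genuinely different in tool and in the $r$ they produce (the $p$-rank of the Jacobian vs. the genus).

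In case \textbf{(SF)} you also have the mechanism slightly wrong: for the local-local part one uses the relative Frobenius sequence $0\to\bd{\alpha}_p\to\mathbb{G}_a\xrightarrow{F}\mathbb{G}_a\to 0$ acting on $H^1(C,\mathcal{O}_C)$, not ``$1-\mathrm{Frob}$'' and not $H^1(C,\omega)$. The correct reduction is: $\mu=0$ iff $H^1_{\mathrm{fl}}(C_{\finf},A\{p\})$ is $p$-divisible, which after $A[p]\cong\bd{\alpha}_p^{\oplus 2g}$ boils down to the vanishing of $H^2_{\mathrm{fl}}(C_{\finf},\bd{\alpha}_p)=\mathrm{coker}\bigl(F\colon H^1(C_{\finf},\mathcal{O})\to H^1(C_{\finf},\mathcal{O})\bigr)$, i.e. surjectivity/invertibility of the Hasse--Witt matrix. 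For the ``only if'' direction of (2), the paper does not construct a free $\F_q[[T]]$-submodule; it argues more directly that non-invertibility of Hasse--Witt forces $\ker\bigl(F\colon H^1(C_{\finf},\mathcal{O})\to H^1(C_{\finf},\mathcal{O})\bigr)$ to be a nonzero $\oline{\F}_p$-vector space, hence $H^1_{\mathrm{fl}}(C_{\finf},A[p])$, and therefore $X(A/\fpinf)/pX(A/\fpinf)$, has infinite $p$-rank, so $X$ fails to be finitely generated over $\Z_p$ and $\mu>0$. That argument is shorter and avoids the stability-under-extension issue you flagged, though your construction, if carried out, would also work. In summary: the reduction step and the broad decomposition are right, but the ordinary case is not a Hasse--Witt condition, the operator and cohomology group in the supersingular case are misidentified, and the concrete tools (Kummer/Picard, Artin--Schreier, relative Frobenius, the Jacobian structure theorem, the computation of $X/pX$) that actually make the proof go are all absent from the sketch.
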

Theorem $\ref{thm:cycmu}$ will be proved in \S $3$. 
The following results give an evidence for Conjecture 
$\ref{conj:ourconj}$. 
\begin{thm}\label{thm:isotriv}
Let $L$ be a Galois extension of $F$ containing $F^{(p)}_{\infty} $ 
whose Galois group $G=\mrm{Gal}(L/F)$ is a $p$-adic Lie group with 
no non-trivial torsion elements. Let $A$ be an abelian variety 
over $F$. We denote by $H$ the Galois group $\mrm{Gal}(L/F^{(p)}_{\infty} )$. 
Let us assume the following conditions: 
\begin{enumerate}
\item 
$L/F^{(p)}_{\infty}$ is ramified at only finitely many primes of 
$F^{(p)}_{\infty}$, say $S$. 
\item 
$A$ has good reduction outside $S$. 
For each prime $v\in S$, $A$ has ordinary reduction $($not necessarily good 
ordinary reduction$)$. 
\item 
We have $\mu (X(A/\fpinf ))=0$. 
\end{enumerate}
Then, $X(A/L)$ is a finitely generated $\La(H)$-module.\\
\end{thm}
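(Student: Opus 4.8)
The plan is to descend from $L$ to $\fpinf$ and then invoke topological Nakayama's lemma. Since $H$ is pro-$p$, the Iwasawa algebra $\La (H)$ is a Noetherian local ring, and a compact $\La (H)$-module is finitely generated over $\La (H)$ as soon as its module of $H$-coinvariants is finitely generated over $\Z_p$. Applied to the compact module $X(A/L)$, this reduces the assertion to showing that $X(A/L)_{H}:=X(A/L)/I_{H}X(A/L)$ is a finitely generated $\Z_p$-module, where $I_{H}\subset\La (H)$ is the augmentation ideal. On the other hand, Theorem~\ref{thm:cyc} says $X(A/\fpinf )$ is a finitely generated torsion module over $\La (\mrm{Gal}(\fpinf /F))$, and hypothesis~(3), namely $\mu (X(A/\fpinf ))=0$, then forces $X(A/\fpinf )$ to be finitely generated over $\Z_p$ by the structure theory of finitely generated $\Z_p[[T]]$-modules. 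Hence it is enough to produce a $\Z_p$-linear map
\[
\phi\colon X(A/L)_{H}\lra X(A/\fpinf )
\]
with $\Ker\phi$ finitely generated over $\Z_p$: then $X(A/L)_{H}$ is an extension of the image of $\phi$, a submodule of $X(A/\fpinf )$, by $\Ker\phi$, and both are finitely generated over the Noetherian ring $\Z_p$.

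The map $\phi$ should come from the usual fundamental diagram. Restriction gives a commutative diagram with exact rows
\[
\begin{CD}
0 @>>> \sel (A/\fpinf ) @>>> \Hfl (\fpinf ,A\{p\}) @>>> \prod_v \cH_v(\fpinf )\\
@. @V{a}VV @V{b}VV @V{c}VV\\
0 @>>> \sel (A/L)^{H} @>>> \Hfl (L,A\{p\})^{H} @>>> (\prod_w \cH_w(L))^{H}
\end{CD}
\]
where $\cH_v(\fpinf )$ is the image of $\Hfl ((\fpinf )_v,A\{p\})$ in $\Hfl ((\fpinf )_v,A)$ and similarly for $\cH_w(L)$; here $v$ runs over the primes of $\fpinf$ and $w$ over those of $L$. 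Taking Pontryagin duals, $\phi=a^{\vee}$, and the snake lemma shows $\Coker a$ is an extension of a submodule of $\Coker b$ by a quotient of $\Ker c$, so $\Ker\phi=(\Coker a)^{\vee}$ is finitely generated over $\Z_p$ provided $\Coker b$ and $\Ker c$ are cofinitely generated over $\Z_p$. Now by inflation--restriction $\Coker b$ embeds into $H^2(H,A\{p\}(L))$, and since $H$ is a compact $p$-adic Lie group of finite $p$-cohomological dimension and $A\{p\}(L)=A[p^{\infty}](L)$ is a cofinitely generated $\Z_p$-module, $H^2(H,A\{p\}(L))$ is cofinitely generated over $\Z_p$. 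It remains to control $\Ker c=\prod_v\Ker c_v$, where by Shapiro's lemma $c_v$ is the restriction map $\cH_v(\fpinf )\to\cH_w(L)$ for a chosen prime $w\mid v$; one has $\Ker c_v\subseteq H^1(\mrm{Gal}(L_w/(\fpinf )_v),A(L_w))$.

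The local terms split according to $S$. For the finitely many $v\in S$, the ordinary (not necessarily good ordinary) reduction hypothesis~(2) enters: analysing the connected--\'etale filtration of the $p$-divisible group of the N\'eron model of $A$ at $v$, in the spirit of the local computations of Coates and Greenberg, one shows $\Ker c_v$ is cofinitely generated over $\Z_p$. For $v\notin S$, where $L/\fpinf$ is unramified by hypothesis~(1) and $A$ has good reduction by hypothesis~(2), one must instead show that these infinitely many primes together contribute only a cofinitely generated $\Z_p$-module to $\Ker c$: here the decomposition group $\mrm{Gal}(L_w/(\fpinf )_v)$ is procyclic, the formal group of the reduction is cohomologically trivial over it, and the reduction, being an abelian variety with good reduction, is handled over the residue field. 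Granting these local inputs, $\Ker c$ is cofinitely generated over $\Z_p$, hence so is $\Ker\phi$, hence $X(A/L)_{H}$ is finitely generated over $\Z_p$, and Nakayama's lemma concludes.

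The main obstacle is this local analysis, in two guises. First, because $F$ has characteristic $p$ every prime of $\fpinf$ has residue characteristic $p$, so there is no free vanishing of local cohomology at the good primes as there is at primes away from $p$ in the number-field case; one must control the contribution of the infinitely many primes outside $S$ uniformly, leaning on both the good reduction and the unramifiedness of $L/\fpinf$ there, and a convenient way to package this is to compute the Selmer group via flat cohomology over the smooth locus of the N\'eron model, which absorbs the good primes into a single global term. Second, at the primes of $S$ one works with merely ordinary reduction, possibly bad (e.g.\ multiplicative), which forces the use of the N\'eron model and its $p$-divisible group rather than an abelian scheme, and requires extracting from the ordinary condition exactly the $\Z_p$-cofiniteness needed; this is the technical heart. (A preliminary point is to confirm that $\La (H)$ is local Noetherian and $X(A/L)$ is a compact $\La (H)$-module so that topological Nakayama applies; this is where $H$ being pro-$p$ is used.)
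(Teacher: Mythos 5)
Your overall skeleton matches the paper's: pass to $H$-coinvariants, show that $X(A/L)_{H}$ is a finitely generated $\Z_{p}$-module, and invoke a topological Nakayama argument (the paper cites Balister--Howson, which is the same device you invoke, and which in fact does not need $H$ pro-$p$). You also correctly diagnose the two obstacles --- the infinitely many local terms at the good/unramified primes, and the local analysis at the ordinary bad primes --- and you even gesture at the right fix for the first one, namely replacing the field cohomology $H^{1}_{\mrm{fl}}(K,A\{p\})$ by the open-curve cohomology $H^{1}_{\mrm{fl}}(C_{K}\setminus S_{K},A\{p\})$ so that the primes outside $S$ disappear from the right-hand column. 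But that observation is left as a remark, and the diagram you actually write down still has a product over all primes.

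The genuine gap is at the second obstacle, which you yourself label ``the technical heart'' and do not close. You propose to control $\Ker c_{v}$ inside $H^{1}(\mrm{Gal}(L_{w}/(\fpinf)_{v}),A(L_{w}))$, but $A(L_{w})$ is not cofinitely generated over $\Z_{p}$, so this containment does not by itself give cofinite generation; the $\Z_{p}$-cofiniteness has to be extracted from the connected--\'etale filtration, and that is not automatic. The paper sidesteps this by introducing a modified Selmer group
$$
\sel^{S}(A/K)=\Ker\Bigl[\,H^{1}_{\mrm{fl}}(K,A\{p\})\lra \prod_{v\in S_{K}}H^{1}_{\mrm{fl}}(K_{v},A\{p\}^{\text{\'et}})\times \prod_{v\notin S_{K}}H^{1}_{\mrm{fl}}(K_{v},A)\,\Bigr],
$$
i.e.\ it weakens the local condition at $v\in S$ from ``image in $H^{1}(K_{v},A)$'' to ``image in $H^{1}(K_{v},A\{p\}^{\text{\'et}})$.'' Two things then happen. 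First, a separate comparison lemma (the paper's Lemma \ref{lem:selcomp}) shows $\sel(A/K)\subset\sel^{S}(A/K)$ with $\Z_{p}$-cofinitely generated quotient, and \emph{this} is precisely where the ordinariness hypothesis at $v\in S$ is consumed, via the surjectivity of $\widehat{\mc{A}}_{v}(\mc{O}_{v})\otimes\Q_{p}/\Z_{p}\to A(K_{v})\otimes\Q_{p}/\Z_{p}$. Second, once one has switched to $\sel^{S}$, the right-hand column in the descent diagram is a \emph{finite} product over $v\in S$ of $H^{1}_{\mrm{fl}}(K_{v},A\{p\}^{\text{\'et}})$, so $\Ker c$ becomes a finite product of groups $H^{1}(H_{w},A\{p\}^{\text{\'et}}(L_{w}))$, whose Pontryagin duals are finitely generated over $\Z_{p}$ simply because $A\{p\}^{\text{\'et}}(L_{w})$ is cofinitely generated over $\Z_{p}$ and $H_{w}$ is a $p$-adic Lie group --- no delicate Coates--Greenberg-style analysis is needed at this stage. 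One concludes that $X^{S}(A/L)_{H}$ is finitely generated over $\Z_{p}$, hence so is its quotient $X(A/L)_{H}$, and Nakayama finishes. In short: your strategy is the correct one, but the missing idea is the auxiliary Selmer group $\sel^{S}$, which (a) converts the hard local term into a tame group-cohomology computation, and (b) isolates the ordinarity hypothesis in a separate, manageable comparison step; without it the local analysis you have flagged as the hard part really is unresolved.
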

Theorem $\ref{thm:isotriv}$ will be proved in \S $4$. 
\begin{cor}
Assume that there exists a finite separable extension $F'$ 
satisfying the condition {\bf (OF)}.
Then, $X(A/L)$ is a finitely generated $\La(H)$-module.
\end{cor}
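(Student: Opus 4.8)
The plan is to obtain the Corollary by feeding the hypothesis \textbf{(OF)} into Theorem \ref{thm:isotriv}, using Theorem \ref{thm:cycmu}(1) to supply the one non-formal input. Concretely, I will work in the ambient situation of Theorem \ref{thm:isotriv}, in which $G=\mrm{Gal}(L/F)$ is a torsion-free $p$-adic Lie group, $L\supseteq F^{(p)}_{\infty}$, $H=\mrm{Gal}(L/F^{(p)}_{\infty})$, and $L/F^{(p)}_{\infty}$ is ramified at only the finitely many primes of $F^{(p)}_{\infty}$ forming the set $S$; this already gives hypothesis (1) of Theorem \ref{thm:isotriv}, so the task is to produce hypotheses (2) and (3) from \textbf{(OF)}.

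For hypothesis (3) I would simply quote Theorem \ref{thm:cycmu}(1): the condition \textbf{(OF)} is one of the two listed sufficient conditions there, so the existence of a finite separable $F'/F$ with $A\times_F F'$ a constant ordinary abelian variety forces $\mu (X(A/\fpinf ))=0$. For hypothesis (2), I would argue that a constant abelian variety extends to an abelian scheme over the smooth model of $F'$, so $A\times_F F'$ has good reduction everywhere; by N\'eron--Ogg--Shafarevich, $A$ then has good reduction at every prime of $F$ unramified in $F'/F$, and at the finitely many remaining primes it has potentially good, hence ordinary, reduction since its potential special fibre is the ordinary reduction of the constant model. Replacing $S$ by its union with the (finite) set of primes of $F^{(p)}_{\infty}$ above the ramification locus of $F'/F$ keeps $S$ finite, preserves hypothesis (1), and secures hypothesis (2); then Theorem \ref{thm:isotriv} gives that $X(A/L)$ is finitely generated over $\La(H)$.

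The only point requiring a little care, rather than being a genuine obstacle, is the last maneuver: one must check that the argument proving Theorem \ref{thm:isotriv} is unaffected by taking $S$ strictly larger than the exact ramification locus of $L/F^{(p)}_{\infty}$ --- i.e.\ that adjoining finitely many primes at which $A$ has good reduction and $L/F^{(p)}_{\infty}$ is unramified does not disturb the control of the local terms --- and one must pin down exactly what ``$A\times_F F'$ isomorphic to an ordinary abelian variety over a finite field'' imposes on the N\'eron model of $A$ at its places of bad reduction. Once this bookkeeping is settled the Corollary is immediate.
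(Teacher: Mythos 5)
The central move --- read off hypothesis (3) of Theorem \ref{thm:isotriv} from Theorem \ref{thm:cycmu}(1) and then apply Theorem \ref{thm:isotriv} --- is exactly what the corollary is for, and you have identified it correctly. The paper gives no written proof, but this is unmistakably the intended one, with the understanding that the corollary is stated inside the standing setup of Theorem \ref{thm:isotriv} (so hypotheses (1) and (2) are inherited, not to be re-derived). Under that reading the corollary is immediate and your first paragraph already finishes the job.

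The part where you go further --- trying to extract hypothesis (2) from \textbf{(OF)} --- is where there is a genuine gap, and it is precisely the ``bookkeeping'' point you yourself flagged. You argue that at the finitely many primes ramified in $F'/F$ the abelian variety $A$ has potentially good reduction with ordinary potential special fibre, ``hence ordinary reduction.'' That last inference does not hold in the sense the theorem needs. The proof of Lemma \ref{lem:selcomp} uses that $\widehat{\mc{A}}_v[p^n]$ is \'etale locally $(\mu_{p^n})^g$, i.e.\ that the formal group of the local N\'eron model is of multiplicative type. For $A$ with potentially good but non-semistable reduction at $v$, the identity component of the N\'eron special fibre has a nontrivial unipotent radical (think of an elliptic curve with additive reduction of type II, III, IV, \ldots), and its formal group then contains $\hat{\mbb{G}}_a$-pieces, not $\hat{\mu}_{p^\infty}$-pieces. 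So after a ramified base change $F'/F$ the hypothesis ``ordinary reduction at $v$'' in the sense of Theorem \ref{thm:isotriv} can fail even though $A\times_F F'$ is constant ordinary. What \emph{would} save the argument is to first pass to a semistable model (as in Lemma \ref{lem:ssred}); in the semistable case potentially good reduction does imply good reduction and your argument goes through. But you do not invoke that reduction, and without it the step from ``potentially good ordinary'' to ``ordinary in the sense of (2)'' is false as stated. The enlargement of $S$ itself is harmless --- the proof of Theorem \ref{thm:isotriv} only uses that $S$ is finite and contains both the ramification locus of $L/F^{(p)}_{\infty}$ and the bad primes of $A$ --- but that does not cure the issue above.

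In short: if you simply read the corollary as keeping (1) and (2) of Theorem \ref{thm:isotriv} in force, your first paragraph is a complete and correct proof matching the paper's intent. Your attempt to also deduce (2) from \textbf{(OF)} is the part that breaks.
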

\par 
In this paper, we decided not to prove our result in full generality. 
However, in proving our results, the necessary hypothesis on the 
reduction of the abelian variety was rather general as compared to 
the cases of abelian varieties over number fields. 
Thus, Main Conjecture (Conjecture $\ref{conj:ourconj}$) was stated 
under more general condition than the one we imagined 
at the beginning of our work. In subsequent papers, 
we will generalize our results by 
removing some of hypothesis assumed in our theorems and we would 
like to make further research which is expected from 
the philosophy of the Iwasawa theory. \\ 
\par 
{\bf Acknowledgements.} 
The first author would like to thank Yoshitaka Hachimori 
for useful discussion on non-commutative generalization 
of Iwasawa theory. 
The second author would like to thank Kazuya Kato
for introducing him to the theory of Iwasawa and Takeshi Saito and the 
University of Tokyo for their hospitality. 
The authors are grateful to Kato for pointing out 
technical mistakes in the first draft of the paper. 
They are also grateful to anonymous refrees for 
pointing out historical mistakes and technical mistakes 
in \S $\ref{section:genord}$. 
\end{section}
\section{General abelian varieties over the base $F^{(p)}_{\infty} $}
\label{section:genord}
In this section, we will prove Theorem $\ref{thm:cyc}$. 
First, we will need the following lemma: 
\begin{lem}\label{lem:ssred}
Let $F'/F$ be a finite Galois extension. 
We denote by $A'$ the extension $A'=A \times_F F'$.
If $X(A'/\Fpinf )$ is a finitely generated torsion 
$\La (G')$-module for $G'=\mrm{Gal}(\Fpinf /F')$, 
$X(A/F^{(p)}_{\infty})$ is a finitely generated torsion 
$\La (G)$-module for $G=\mrm{Gal}(F^{(p)}_{\infty}/F)$. 
\end{lem}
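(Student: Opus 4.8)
The plan is to compare the Selmer groups of $A$ over $\fpinf$ and of $A' = A\times_F F'$ over $\Fpinf$ via the natural restriction map, and to descend the finite-generation-and-torsion property along the finite extension $\Fpinf/\fpinf$ whose Galois group $\Delta = \mathrm{Gal}(\Fpinf/\fpinf) = \mathrm{Gal}(F'/F\cap \finf^{(p)})$ (a quotient of $\mathrm{Gal}(F'/F)$) is finite. First I would set up the inflation–restriction exact sequence for flat cohomology
$$
0 \lra H^1(\Delta, A\{p\}(\Fpinf)) \lra \Hfl(\fpinf, A\{p\}) \lra \Hfl(\Fpinf, A\{p\})^{\Delta} \lra H^2(\Delta, A\{p\}(\Fpinf)),
$$
together with the analogous local sequences at each prime, and compare Selmer groups to deduce that the kernel and cokernel of the restriction map $\sel(A/\fpinf) \to \sel(A'/\Fpinf)^{\Delta}$ are controlled by the cohomology of the finite group $\Delta$ with coefficients in $p$-divisible groups. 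Dualizing, $X(A/\fpinf)$ sits in an exact sequence relating it to the coinvariants (or a submodule/quotient) of $X(A'/\Fpinf)$ under $\Delta$, with bounded error terms.

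Next I would exploit that $\La(G') = \Z_p[[G']]$ is free of finite rank over $\La(G) = \Z_p[[G]]$: choosing compatible isomorphisms $G\cong \Z_p$, $G'\cong \Z_p$, the inclusion $G'\hookrightarrow G$ realizes $\La(G')$ as a free $\La(G)$-module of rank $[G:G'] = |\Delta|$ (this is where the hypothesis that $G'$ is finite index in $G$ enters, which holds since both are $\cong\Z_p$). Hence any finitely generated (resp. torsion) $\La(G')$-module, viewed by restriction of scalars as a $\La(G)$-module, is again finitely generated (resp. torsion) over $\La(G)$. Applying this to $X(A'/\Fpinf)$, and using the control sequence from the previous step — whose error terms are cofinitely generated over $\Z_p$, hence have finitely generated $\Z_p$-dual, hence are finitely generated torsion $\La(G)$-modules — I conclude that $X(A/\fpinf)$ is an extension of a $\La(G)$-submodule of (a $\Delta$-(co)invariant piece of) the restriction of $X(A'/\Fpinf)$ by a finitely generated torsion $\La(G)$-module, and therefore is itself finitely generated and torsion over $\La(G)$.

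The main obstacle I anticipate is the careful bookkeeping in the control sequence: showing that the local and global error terms coming from $H^i(\Delta, -)$ applied to $A\{p\}$, $A$, and the local Tate modules are genuinely small — i.e. cofinitely generated over $\Z_p$ and supported at only finitely many primes — so that their Pontryagin duals are finitely generated torsion $\La(G)$-modules. In characteristic $p$ one must be slightly attentive to flat (rather than étale) cohomology and to the fact that $p\mid |\Delta|$ is allowed, so the Herbrand-quotient-type vanishing used for $\ell$-groups is unavailable; instead I would argue that these Tate-cohomology groups of $\Delta$ acting on $p$-divisible or divisible groups are killed by $|\Delta|$ and are cofinitely generated, which suffices. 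Once this is in place, the descent is formal. I would also note that the hypothesis only needs the weaker statement for \emph{some} finite Galois $F'$, which is exactly how Lemma $\ref{lem:ssred}$ will be invoked in the proof of Theorem $\ref{thm:cyc}$ (reducing the general case to a base change after which $A$ acquires a convenient reduction type).
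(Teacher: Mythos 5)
Your overall strategy --- a control theorem relating $\sel(A/\fpinf)$ to $\sel(A'/\Fpinf)^{\Delta}$, with error bounded by cohomology of the finite group $\Delta = \mrm{Gal}(\Fpinf/\fpinf)$, followed by descent between the Iwasawa algebras $\La(G')$ and $\La(G)$ --- is the same as the paper's. But the ring-theoretic descent step, as you have written it, is backwards, and this is a genuine error. Since $F' \cap \fpinf$ is a finite extension of $F$, the group $G' = \mrm{Gal}(\Fpinf/F')$ is identified with the open subgroup $\mrm{Gal}(\fpinf/F'\cap\fpinf)$ of $G = \mrm{Gal}(\fpinf/F)$, of finite index $[G:G']$. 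Consequently $\La(G') \subset \La(G)$ is a \emph{subring}, and it is $\La(G)$ that is free of rank $[G:G']$ over $\La(G')$ --- not $\La(G')$ free over $\La(G)$ as you claim. There is also no meaningful ``restriction of scalars'' that turns a $\La(G')$-module into a $\La(G)$-module. What actually makes the descent work is that $X(A/\fpinf)$ already carries its own $\La(G)$-module structure: the natural $\La(G')$-linear map $X(A'/\Fpinf)_{\Delta} \to X(A/\fpinf)$ exhibits $X(A/\fpinf)$ as finitely generated and torsion over the subring $\La(G')$, and for a module already equipped with a $\La(G)$-action, a $\La(G')$-generating set is a fortiori a $\La(G)$-generating set, while a nonzero annihilator in $\La(G')$ is a nonzero element of the domain $\La(G)$. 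The ``free of finite rank'' fact is not needed in this direction at all (it would be needed for the converse, which is not what is being proved).

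On the control step, the paper's route is also simpler than yours. Since $X(A/\fpinf)$ and $X(A'/\Fpinf)$ are quotients of $H^1_{\mrm{fl}}(\fpinf, A\{p\})^{\vee}$ and $H^1_{\mrm{fl}}(\Fpinf, A\{p\})^{\vee}$ respectively, the cokernel of $X(A'/\Fpinf)_{\Delta}\to X(A/\fpinf)$ --- which is dual to the kernel of the Selmer restriction map --- is a quotient of $H^1(\Delta, A\{p\}(\Fpinf))^{\vee}$, and this is finite because $\Delta$ is finite and $A\{p\}(\Fpinf)$ is a cofinitely generated $\Z_p$-module. No local error terms enter at all, so your worries about flat versus \'etale cohomology, $p \mid |\Delta|$, and the unavailability of Herbrand quotients, while sensible precautions in general, do not arise here: one only needs to control the kernel of Selmer restriction, and that sits inside the single global $H^1(\Delta, -)$ term.
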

\begin{proof}
Suppose that 
$X(A'/\Fpinf )$ is a finitely generated torsion 
$\La (G')$-module. 
Then we have $J= \mrm{Gal}(F'/F^{(p)}_{\infty} \cap F')\cong 
\mrm{Gal}(\Fpinf /F^{(p)}_{\infty})$. 
\begin{equation}
\xymatrix{
   &  {F'}_{\infty}^{,(p)}  \ar@{-}_{G'}[dl] \ar@{-}^{J}[d]\\
 F' \ar@{-}[d]_{J} & F^{(p)}_{\infty} \ar@{-}_{G'}[dl] \\ 
 F' \cap F^{(p)}_{\infty} \ar@{-}[d] &  \\
 F \ar@{-}_{G}[uur] & }
\end{equation}
Recall that $X(A /\fpinf )$ and $X(A' /\Fpinf )$ are quotients of 
the modules $H^1_{\mrm{fl}} (\fpinf ,A\{ p\})^{\vee}$ and 
$H^1_{\mrm{fl}} (\Fpinf ,A\{ p\})^{\vee}$ 
respectively by the definition 
given in $(\ref{equation:seldef})$. 
Hence the cokernel of the natural $\Lambda (G')$-linear map 
$X(A'/\Fpinf)_J \lra X(A/\fpinf )$ 
is a quotient of the Pontrjagin dual of a finite group 
$H^1 (J ,A\{ p\}(\Fpinf )) \cong \mrm{Ker}[
H^1_{\mrm{fl}} (\fpinf ,A\{ p\}) \lra 
H^1_{\mrm{fl}} (\Fpinf ,A\{ p\})^{J}]$. Hence the $\Lambda (G)$-module 
$X(A/\fpinf )$ is a torsion $\La (G')$-module via 
natural identification of $G'$ as an open subgroup of $G$. 
Hence it is also torsion over $\La (G)$. 
\end{proof}
By the semi-stable reduction theorem for abelian varieties, 
an abelian variety $A$ over $F$ has everywhere semi-stable reduction 
after the base field extension $A\times _F F'$ by a certain finite 
Galois extension $F'/F$. 
Hence, by Lemma $\ref{lem:ssred}$, we may (and we will) assume the following 
condition {(SS)} from now on through this section for the proof 
of Theorem $\ref{thm:cyc}$:  \vspace*{5pt}
\\ 
{\bf (SS)} $A/F$ has everywhere semi-stable reduction. 
\vspace*{5pt}
\\ 
Let $C_{\finf}$ be a proper smooth geometrically connected curve 
over $\oline{\mbb{F}}_q$ which is the model of the function field $\finf 
=F\oline{\mbb{F}}_q $. 
We denote by $U$ the dense open subset of $C_{\finf}$ such that 
$A \times _F \finf $ has good reduction on $U$ and we denote by 
$Z$ the complement $C_{\finf} \setminus U$. Let $\mathcal{A}$ be the 
N\'{e}ron model of $A \times _F \finf $ over $C_{\finf}$. 
We associate the Lie algebra 
$\mrm{Lie}(\mc{A})$ to $\mc{A}$, which is a sheaf of 
algebras on $C_{\finf}$. 
\par 
In \cite{KT},  syntomic cohomology 
for abelian varieties has been studied 
for application to the Birch and Swinnerton-Dyer conjecture of 
abelian varieties over function fields. 
In this section, we will reduce our main result to 
Theorem $\ref{theorem-section2}$ for general Dieudonn\'{e} 
crystals using tools studied in \cite{KT} and give a proof of 
Theorem $\ref{theorem-section2}$. 
\par 
We shall summarize necessary results and definition for later use 
in this section. From now on, 
we denote the Witt algebra $W(\oline{\mbb{F}}_q)$ by $W$ 
if there is no confusion. Let us denote by $C_{\finf }^{\sharp}$ 
the log scheme associated to a divisor $Z$ on $C_{\finf }$. 
We refer the reader to \cite{KT} for detailed explanation. 
Let $D$ be a Dieudonn\'{e} crystal on $C_{\finf }^{\sharp}/W$. 
If $Z$ is empty, then $D$ is a classical Dieudonn\'{e} 
crystal corresponding to a $p$-divisible group $G/C_{\finf}$. 
Recall the following theorem from \cite{KT}:
\begin{thm} Let $D$ be a Dieudonn\'{e} 
crystal on $C_{\finf }^{\sharp}$. Let $i$ be the canonical morphism of 
topos of \cite{bbm} from the topos of sheaves on 
$(C_{\finf})_{\mrm{\acute{e}t}}$ 
to the crystalline topos $(C_{\finf }^{\sharp}/W)_{\mrm{crys}}$. 
There exists an 
${\cal O}_{C_{\finf }}$-module $\mrm{Lie}(D)$ 
locally free of finite rank and a surjective map of sheaves 
$D\to i_*(\mrm{Lie}(D))$ in $(C_{\finf }^{\sharp}/W)_{\mrm{crys}}$.
\end{thm}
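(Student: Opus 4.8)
The plan is to realise $\mathrm{Lie}(D)$ as a canonical locally free quotient of the value of the crystal $D$ on the special fibre, to build the map $D\to i_{*}(\mathrm{Lie}(D))$ from the unit of the adjunction $i^{*}\dashv i_{*}$, and then to verify surjectivity by testing it on a generating family of the crystalline topos, where it decomposes into two evident epimorphisms.

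\emph{Construction of $\mathrm{Lie}(D)$.} Since $p=0$ on $C_{\finf}$, the log scheme $C_{\finf}^{\sharp}$ is itself an object $(C_{\finf}^{\sharp}\hookrightarrow C_{\finf}^{\sharp})$ of the log-crystalline site $(C_{\finf}^{\sharp}/W)_{\mathrm{crys}}$ (with zero divided-power ideal), so I put $\mathcal{E}:=i^{*}D$, the value of the crystal there; it is a locally free $\mathcal{O}_{C_{\finf}}$-module of finite rank, equipped with an integrable log connection and the Frobenius and Verschiebung morphisms $F,V$ with $FV=VF=p$. Following \cite{bbm} when $Z=\varnothing$, and its logarithmic refinement in \cite{KT} in general, the crystal carries a canonical Hodge filtration: an $\mathcal{O}_{C_{\finf}}$-submodule $\omega_{D}\subset\mathcal{E}$ which is locally a direct summand — locally, the image of $F\colon\mathcal{E}^{(p)}\to\mathcal{E}$, equivalently the kernel of $V$. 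Set $\mathrm{Lie}(D):=\mathcal{E}/\omega_{D}$; as the quotient of a finite locally free module by a local direct summand it is locally free of finite rank, and when $Z=\varnothing$ and $D=\D(G)$ for a $p$-divisible group $G/C_{\finf}$ it recovers $\mathrm{Lie}(G)$, which fixes the normalisation. Let $\pi\colon\mathcal{E}\twoheadrightarrow\mathrm{Lie}(D)$ be the projection.

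\emph{Construction of the map and surjectivity.} The unit of $i^{*}\dashv i_{*}$ gives $D\to i_{*}i^{*}D=i_{*}\mathcal{E}$, and composing with $i_{*}(\pi)$ yields
$$
\varphi\colon D\lra i_{*}(\mathrm{Lie}(D))
$$
in $(C_{\finf}^{\sharp}/W)_{\mathrm{crys}}$. Epimorphisms of sheaves are detected on a generating family, so it suffices to check surjectivity on the objects $(U\hookrightarrow T,\delta)$ with $U\subseteq C_{\finf}$ affine open. On such an object $i_{*}(\mathrm{Lie}(D))$ has value $\mathrm{Lie}(D)|_{U}$, the crystal $D$ has value the locally free $\mathcal{O}_{T}$-module $D_{T}$, and $\varphi_{(U,T)}$ is, by construction of the unit, the composite
$$
D_{T}\twoheadrightarrow D_{T}/I_{T}D_{T}\cong D_{U}=\mathcal{E}|_{U}\xrightarrow{\pi}\mathrm{Lie}(D)|_{U},
$$
where $I_{T}\subset\mathcal{O}_{T}$ is the divided-power ideal of $U$ in $T$ and the isomorphism is the crystal identification. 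The first arrow is surjective (it is a quotient map) and the last because $\pi$ is; hence $\varphi$ is an epimorphism, which is the assertion.

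\emph{Where the work is.} Everything after the first step is formal; the one substantive input is the construction of the Hodge filtration $\omega_{D}$ as a local direct summand for an \emph{arbitrary} log Dieudonné crystal on $C_{\finf}^{\sharp}$ — not merely for the Dieudonné crystal of the $p$-divisible group of a semistable abelian variety — together with the compatibility of $\mathrm{Lie}(D)$ with base change and with restriction to $C_{\finf}\smallsetminus Z$, where it must agree with the classical construction of \cite{bbm}. This rests on the logarithmic Dieudonné theory of \cite{KT}: one realises $\omega_{D}$ through the universal vector extension of the associated log $p$-divisible group (or directly from the $F$–$V$ structure, using the admissibility built into the notion of Dieudonné crystal to guarantee that $\mathrm{Im}(F)=\Ker V$ is a subbundle) and checks that its formation is local on $C_{\finf}$ and insensitive to the log structure away from $Z$. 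Granting this structural result, the construction of $\varphi$ and the verification of surjectivity go through as above.
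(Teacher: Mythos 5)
Your proposal is correct and follows the same route as the paper, which simply defers to \cite[\S 5.3]{KT} and observes that the construction there carries over when the base $\mathbb{F}_q$ is replaced by $\oline{\mathbb{F}}_q$. You have unpacked exactly what that reference does: take the Hodge filtration $\omega_D\subset i^*D$ coming from the admissible $F$--$V$ structure of the (log) Dieudonn\'e crystal, set $\mathrm{Lie}(D)=i^*D/\omega_D$, define the map via the unit of $i^*\dashv i_*$, and check surjectivity on the objects of the crystalline site via the crystal identification $D_T/I_T D_T\cong D_U$.
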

\begin{pf} The proof of \cite[\S 5.3]{KT}, remains 
still valid if we replace a base scheme over $\F_q$ 
by a scheme over $\oline{\F}_q$.
\end{pf}
Let us summarize necessary results and definitions related to the above 
theorem. 
\begin{enumerate}
\item We denote by $D^0$ the kernel of 
$D\lra i_{\ast}(\mrm{Lie}(D))$ in 
$(C_{\finf }^{\sharp}/W )_{\mrm{crys}}$ and 
we denote the canonical injection by ${\bf 1}:\ D^0\lra D$. 
By applying the canonical projection $u_{\ast}$ 
from the crystalline topos $(C_{\finf }^{\sharp}/W)_{\mrm{crys}}$ 
to the topos of sheaves on $(C_{\finf })_{\mrm{\acute{e}t}}$, 
we get a distinguished triangle$:$ 
$$
Ru_{\ast} D^0 \overset{\bf 1}{\lra}  Ru_{\ast} D\lra \mrm{Lie}(D).
$$
We can twist this triangle by the divisor $Z$ to get 
a triangle: 
\begin{equation}\label{equation:tri}
Ru_{\ast} D^0 (-Z) 
\overset{\bf 1}{\lra}  Ru_{\ast} D (-Z) \lra \mrm{Lie}(D)(-Z).
\end{equation}
where $D(-Z)$ is the twist of 
the Dieudonn\'{e} crystal $D$ defined in \cite[\S 5.11]{KT}. 
\item 
We denote the $i$-th cohomology associated to 
$Ru_{\ast} D^0$ $($resp. $Ru_{\ast} D)$ by the symbol 
$H^i_{\mrm{crys}}(C_{\finf}^{\sharp}/W,D^0(-Z))$ 
$($resp. $H^i_{\mrm{crys}}(C_{\finf}^{\sharp}/W,D(-Z)))$ and by $H^i_{\mrm{crys}}(C_{\finf}^{\sharp}/W,D^0(-Z)\otimes \Q_p/\Z_p)$ (resp. $H^i_{\mrm{crys}}(C_{\finf}^{\sharp}/W,D(-Z)\otimes \Q_p/\Z_p)$) the $i$-th cohomology associated to 
$Ru_{\ast} D^0\otimes^{\mbb{L}} 
\Q_p/\Z_p$ (resp. $Ru_{\ast} D\otimes^{\mbb{L}} 
\Q_p/\Z_p$) by abuse of notation. 
We have the following lemma: 
\begin{lem}\label{lem1-section2} \ \\
\begin{enumerate}
\item The canonical map 
$H^i_{\mrm{crys}}(C_{\finf}^{\sharp}/W,D^0 (-Z))\buildrel{\bf 1}\over\lra 
H^i_{\mrm{crys}}(C_{\finf}^{\sharp}/W,D(-Z))$ has a kernel and a cokernel
  killed by $p$.
\item $H^i_{\mrm{crys}}(C_{\finf}^{\sharp}/W,D^0(-Z))$ and
  $H^i_{\mrm{crys}}(C_{\finf}^{\sharp}/W,D(-Z))$ are finitely generated 
$W$-modules with the same rank.
\end{enumerate}
\end{lem}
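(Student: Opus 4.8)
The plan is to deduce both statements from the distinguished triangle $(\ref{equation:tri})$ together with general finiteness facts about crystalline cohomology of a proper curve. First I would recall that $\mrm{Lie}(D)$ is a locally free ${\cal O}_{C_{\finf}}$-module of finite rank (hence so is its twist $\mrm{Lie}(D)(-Z)$), so that the coherent cohomology groups $H^i(C_{\finf},\mrm{Lie}(D)(-Z))$ are finite-dimensional $\oline{\mbb{F}}_q$-vector spaces, and in particular are killed by $p$. Likewise I would invoke the finiteness of crystalline cohomology of the proper log-smooth scheme $C_{\finf}^{\sharp}/W$ with coefficients in the Dieudonn\'e crystal $D(-Z)$: the groups $H^i_{\mrm{crys}}(C_{\finf}^{\sharp}/W,D(-Z))$ are finitely generated $W$-modules (this is where one uses properness of $C_{\finf}$ and the coherence built into the definition of a Dieudonn\'e crystal; base change from $\F_q$ to $\oline{\F}_q$ does not affect this, exactly as in the theorem quoted above).

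For part (a), I would simply write down the long exact cohomology sequence attached to the triangle $(\ref{equation:tri})$,
\begin{equation}
\cdots \lra H^{i-1}(C_{\finf},\mrm{Lie}(D)(-Z)) \lra H^i_{\mrm{crys}}(C_{\finf}^{\sharp}/W,D^0(-Z)) \overset{\bf 1}{\lra} H^i_{\mrm{crys}}(C_{\finf}^{\sharp}/W,D(-Z)) \lra H^i(C_{\finf},\mrm{Lie}(D)(-Z)) \lra \cdots
\end{equation}
The kernel of ${\bf 1}$ on $H^i$ is a quotient of $H^{i-1}(C_{\finf},\mrm{Lie}(D)(-Z))$ and the cokernel injects into $H^i(C_{\finf},\mrm{Lie}(D)(-Z))$; since the outer terms are $\oline{\mbb{F}}_q$-vector spaces, both the kernel and cokernel of ${\bf 1}$ are killed by $p$. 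This gives (a) immediately.

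For part (b), finite generation of $H^i_{\mrm{crys}}(C_{\finf}^{\sharp}/W,D(-Z))$ over $W$ is the crystalline finiteness statement recalled above. Finite generation of $H^i_{\mrm{crys}}(C_{\finf}^{\sharp}/W,D^0(-Z))$ then follows from (a): the map ${\bf 1}$ has kernel killed by $p$, hence finitely generated over $W$ (it is a submodule of the $p$-torsion, which is a finite-dimensional $\oline{\mbb{F}}_q$-vector space inside a finitely generated $W$-module — here one uses that $H^{i-1}$ of the Lie algebra is finite-dimensional), and its image is a submodule of a finitely generated $W$-module, hence finitely generated since $W$ is Noetherian; an extension of two finitely generated modules is finitely generated. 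Finally, equality of ranks follows because after $\otimes_W \mrm{Frac}(W)$ the map ${\bf 1}$ becomes an isomorphism: its kernel and cokernel are $p$-torsion by (a), hence vanish after inverting $p$. I expect the only real subtlety to be pinning down the precise finiteness input for $H^i_{\mrm{crys}}(C_{\finf}^{\sharp}/W,D(-Z))$ — one must make sure the relevant result of \cite{KT} (or \cite{bbm}) is stated for log-smooth proper curves over $\oline{\F}_q$ with coefficients in a twisted Dieudonn\'e crystal, but as the quoted theorem already notes, the arguments of \cite[\S 5]{KT} go through verbatim over $\oline{\F}_q$.
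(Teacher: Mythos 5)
Your argument is correct and follows essentially the same route as the paper: both use the long exact cohomology sequence of the triangle $(\ref{equation:tri})$, the observation that the coherent cohomology of $\mrm{Lie}(D)(-Z)$ consists of finite-dimensional $\oline{\F}_q$-vector spaces (hence $p$-torsion), and Tsuji's finiteness theorem for log-crystalline cohomology with finite locally free coefficients. The one place you are slightly more explicit than the paper is in part (b): the paper simply says (b) follows from (a) and Tsuji, whereas you carefully deduce finite generation of $H^i_{\mrm{crys}}(C^{\sharp}_{\finf}/W,D^0(-Z))$ from finite generation of $H^i_{\mrm{crys}}(C^{\sharp}_{\finf}/W,D(-Z))$ plus (a) and the finiteness of the Lie-algebra cohomology, rather than applying Tsuji directly to $D^0(-Z)$; this is harmless (arguably cleaner, since $D^0$ is only defined as a kernel and need not be invoked as a locally free crystal in its own right), and your rank-equality argument via inverting $p$ is exactly the intended one.
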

\begin{pf}  
By taking the cohomology of the triangle 
$(\ref{equation:tri})$, we have the exact sequence: 
\begin{multline*}
H^{i-1}_{\mrm{fl}}(C_{\finf},\mrm{Lie}(D)(-Z)) \lra 
H^i_{\mrm{crys}}(C_{\finf}^{\sharp}/W,D^0(-Z)) \\ 
\buildrel{\bf 1}\over\lra 
H^i_{\mrm{crys}}(C_{\finf}^{\sharp}/W,D(-Z)) 
\lra H^{i}_{\mrm{fl}}(C_{\finf},\mrm{Lie}(D)(-Z)). 
\end{multline*} 
Since $H^{j}_{\mrm{fl}}(C_{\finf},\mrm{Lie}(D)(-Z))$ 
is a finite dimensional vector space over $\oline{\F}_q$ 
for every $j$, this completes the proof of the first assertion. 
The second assertion follows from the first one and the 
fact that crystalline cohomologies over a proper log smooth 
base scheme with finite locally free coefficients 
are of finite dimension by Tsuji \cite{T}. 
\end{pf}
\item 
As in \cite[\S 5.8]{KT}, we can construct a Frobenius operator 
$$\varphi:Ru_*D^0(-Z)\lra Ru_{\ast}D(-Z),$$ 
which induces a $\sigma$-linear homomorphism 
$$
\varphi_i  : 
H^i_{\mrm{crys}}(C_{\finf}^{\sharp} /W,D^0 (-Z)) 
\lra 
H^i_{\mrm{crys}}(C_{\finf}^{\sharp} /W,D(-Z))
$$ 
for each $i$ by $\sigma$-linearity of the composed map $F\circ\iota:D
\rightarrow \
D$, where $\iota:D \rightarrow  \sigma^*D$ is the map sending $x\to 1\otimes x$. 
\item 
We denote ${\cS}_D$ the mapping fiber of the map 
$${\bf 1}-\varphi:Ru_*D^0(-Z) \lra Ru_*D(-Z).$$ 
This complex is an object in the derived category of 
complexes of sheaves over $(C_{\finf })_{\mrm{\acute{e}t}}$ 
and we have a distinguished triangle: 
\begin{equation}\label{equation:syn}
\cS_D\lra Ru_{\ast} D^0(-Z) 
\overset{1-\varphi}{\lra} Ru_{\ast}D(-Z).
\end{equation}
We denote by $H^i_{\mrm{syn}} (C_{\finf},\mathcal{S}_D)$ 
the i-th cohomology group of $R\Gamma(C_{\finf} ,\cS_D$) and by 
$H^i_{\mrm{syn}} (C_{\finf},\mc{S}_{D} \otimes \Q_p /\Z_p )$  
the i-th cohomology group of $R\Gamma(C_{\finf},\cS_D)\otimes_{\Z_p}^{\mbb{L}} 
\Q_p/\Z_p$ by abuse of notation. 
\item 
Finally, let $X(D/\finf)$ denote the Pontrjagin dual of 
$H^i_{\mrm{syn}} (C_{\finf},\mc{S}_{D} \otimes \Q_p /\Z_p )$ 
and let $Y(D/\finf)=X(D/\finf)/X(D/\finf)(p).$
\end{enumerate}
In this paper, we are concerned with the two following crucial examples of Dieudonn\'e crystals:
\begin{example}\label{ex}
\begin{enumerate}
\item Let $A/F$ be an abelian variety satisfying the condition ${\bf (SS)}$. 
We denote $U$ the dense open subset of $C_{\finf}$ 
such that $A/F$ has good reduction on $U$ and 
$Z:=C_{\finf}\setminus U$. Let $\mc{A}$ 
be the N\'{e}ron model of $A\times_{F}\finf $ over $C_{\finf}$. 
It has been shown in \cite[\S 4.9]{KT} that the classical 
Dieudonn\'{e} crystal $D(\mc{A}_{/U})$ extends to a Dieudonn\'e 
crystal $D(\mc{A})$ over $C^{\sharp}_{\finf}/W$.
\item For any $p$-divisible group $\mc{G}$ over $C_{\finf}$, 
we can associate the classical Dieudonn\'{e} crystal $D({\cal G})$ 
which is a crystal over $C_{\finf}/W$. 
\end{enumerate}
\end{example}
In the rest of this section, we are going to deduce 
``finiteness theorems" over $F^{(p)}_{\infty}$ from 
``finiteness theorems" over $F_{\infty}$ introduced above. 
We denote by $\Gamma$ (resp. $\Gamma^{(p)}$, $\Gamma^{(p')}$) 
the Galois group $\mrm{Gal} (F_{\infty}/F)$ (resp. 
$\mrm{Gal} (F^{(p)}_{\infty}/F)$, 
$\mrm{Gal} (F_{\infty}/F^{(p)}_{\infty})$). 
Note that the group $\Gamma$ (resp. $\Gamma^{(p)}$, $\Gamma^{(p')}$) 
is isomorphic to $\what{\Z}$ (resp. $\Z_p$, $\uset{l\not= p}{\prod}{\Z_l}$). 
Note that $W=W(\oline{\mbb{F}}_q)$ is naturally endowed with a 
structure of $\La (\Gamma ) =\Z_p [[\Gamma ]]$-module. In fact 
$W(\mbb{F}_{q^n})$ is naturally a $\La (\Gamma )$-module through 
the action via a unique quotient $\Gamma_n$ of $\Gamma$ 
isomorphic to $\Z /(n)$. Hence $\varinjlim_n W(\mbb{F}_{q^n})$ is also 
a $\La (\Gamma )$-module. Since $W$ is the $p$-adic completion of 
$\varinjlim_n W(\mbb{F}_{q^n})$, we see that $W$ is a 
$\La (\Gamma )$-module. By crystalline base change theorem, 
We have: 
$$
H^i_{\mrm{crys}}(C_{\finf}^{\sharp} /W,E) \cong 
H^i_{\mrm{crys}}(C_{F}^{\sharp} /W(\mbb{F}_q),E) 
\otimes_{W(\mbb{F}_q)} W 
$$
for $E=D^0 (-Z)$ or $E=D(-Z)$. Hence 
$H^i_{\mrm{crys}}(C_{\finf}^{\sharp} /W,E)$ is naturally endowed with 
a structure of $\La (\Gamma ) $-module.
Since the triangle $(\ref{equation:syn})$ is also naturally obtained as 
the $p$-adic completion of the inductive limit of 
the similar triangles over $W(\mbb{F}_{q^n} )$, we also show that 
the syntomic cohomology is endowed with 
a structure of $\La (\Gamma ) $-module and the long exact sequence induced 
from the triangle $(\ref{equation:syn})$ is $\La (\Gamma ) $-linear. 
\par 
Though the Iwasawa algebra $\Lambda (\Gamma )$ is neither integral nor 
Noetherian, the algebra $\Lambda (\Gamma^{(p)} )$ is an integral Noetherian 
domain. Thus, we recall the following fundamental facts without proof: 
\begin{lem}\label{claim:extension}
Let $M'\lra M \lra M''$ be an exact sequence of 
$\La (\Gamma^{(p)})$-modules. Then, if two of these three modules 
are torsion of finite type over $\La (\Gamma^{(p)})$, the other one 
is also torsion of finite type over $\La (\Gamma^{(p)})$. 
In other words, the category of finite type torsion 
$\La (\Gamma^{(p)})$-modules 
are stable under taking a submodule, a quotient and an extension 
in the category of $\La (\Gamma^{(p)})$-modules. 
\end{lem}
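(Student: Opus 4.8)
The plan is to deduce the statement from two elementary ring-theoretic properties of $\La (\Gamma^{(p)})$. First, choosing a topological generator $\gamma$ of $\Gamma^{(p)}\cong\Z_p$ and sending $\gamma-1$ to $T$ identifies $\La (\Gamma^{(p)})$ with the formal power series ring $\Z_p[[T]]$; in particular it is an integral domain, and it is Noetherian by Hilbert's basis theorem applied to the Noetherian ring $\Z_p$. Second, for a short exact sequence $0\to M'\to M\to M''\to 0$ of modules over an arbitrary ring one has the standard closure implications for \emph{finite generation} over a Noetherian ring, and, over an integral domain, for being \emph{torsion}. Granting these, the class of finitely generated torsion $\La (\Gamma^{(p)})$-modules is the intersection of two Serre subcategories of the category of $\La (\Gamma^{(p)})$-modules, hence itself a Serre subcategory, which is exactly the assertion. (I read the displayed ``exact sequence $M'\to M\to M''$'' as a short exact sequence, as the ``in other words'' reformulation makes clear; with exactness imposed only at the middle term the three-term statement would fail.)

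Concretely I would check the three cases separately. If $M'$ and $M$ are finitely generated torsion, then $M''$, being a quotient of $M$, is finitely generated (Noetherianity is not even needed) and torsion (a quotient of a torsion module is torsion over any ring). If $M$ and $M''$ are finitely generated torsion, then $M'\hookrightarrow M$ is finitely generated because submodules of finitely generated modules over a Noetherian ring are finitely generated, and it is torsion because submodules of torsion modules are torsion. The remaining case — $M'$ and $M''$ finitely generated torsion forcing $M$ finitely generated torsion — is the only one needing a genuine argument: lifting a finite generating set of $M''$ to $M$ and adjoining a finite generating set of $M'$ produces a finite generating set of $M$; and for the torsion property, given $m\in M$, one chooses $0\neq a\in\La (\Gamma^{(p)})$ killing the image of $m$ in $M''$, so that $am\in M'$, then chooses $0\neq b$ with $b(am)=0$, and notes that $ab\neq 0$ since $\La (\Gamma^{(p)})$ is a domain, whence $(ab)m=0$.

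There is no serious obstacle here: as the authors themselves indicate, the lemma is a recollection of standard commutative algebra, and I would present it in a few lines, citing a reference (e.g. Bourbaki or Matsumura) for the finiteness implications and writing out only the domain argument in the extension case. The one point worth emphasizing is precisely \emph{where} the hypotheses on the ring are used — Noetherianity for stability under submodules, and the absence of zero divisors for the product $ab$ above to remain nonzero — since these are exactly the properties that fail for $\La (\Gamma)=\Z_p[[\what\Z]]$, which is why one must descend from $\finf$ to $\fpinf$ before invoking structural finiteness arguments at all.
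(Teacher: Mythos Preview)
Your proof is correct, and there is nothing to compare against: the paper explicitly states this lemma \emph{without proof}, recording it as a ``fundamental fact'' after observing that $\La(\Gamma^{(p)})$ is a Noetherian integral domain. Your argument supplies exactly the standard justification the authors are alluding to, and your closing remark --- that Noetherianity and integrality are precisely what fail for $\La(\Gamma)$ and hence motivate the passage to $\Gamma^{(p')}$-coinvariants --- is on point. Your reading of the three-term sequence as a short exact sequence is also the intended one, as the ``in other words'' clause and the subsequent applications of the lemma in the paper confirm.
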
 
We use the following proposition for the proof 
of Theorem $\ref{thm:cyc}$ at the end of this section$:$ 
\begin{prop}\label{theorem-section2} 
 Let $D$ be a Dieudonn\'e crystal over $C^{\sharp}_{\finf}/W$. 
Then $X(D/\finf)_{\Gamma^{(p')}}$ is a finitely generated torsion 
$\La (\Gamma^{(p)})$-module. 
\end{prop}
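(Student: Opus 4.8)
The plan is to relate the syntomic cohomology over $\finf$ to the crystalline cohomology groups, whose $\Lambda(\Gamma)$-module structure comes from base change, and then descend along the pro-$p'$ direction $\Gamma^{(p')}$. Concretely, I would first take the long exact sequence associated to the distinguished triangle $(\ref{equation:syn})$ after $\otimes^{\mbb L}\Q_p/\Z_p$, so that $H^i_{\mrm{syn}}(C_{\finf},\cS_D\otimes\Q_p/\Z_p)$ sits in an exact sequence with the divisible parts of $H^i_{\mrm{crys}}(C_{\finf}^\sharp/W,D^0(-Z))$ and $H^i_{\mrm{crys}}(C_{\finf}^\sharp/W,D(-Z))$ and the map ${\bf 1}-\varphi$ between them. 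Dualizing, $X(D/\finf)$ is controlled by $\Coker$ and $\Ker$ of ${\bf 1}-\varphi$ on the Pontrjagin duals of these divisible crystalline cohomology groups, which by Lemma $\ref{lem1-section2}$ are (up to $p$-torsion) finitely generated $W$-modules, hence up to finite groups of the form $(W/p^?)^{\oplus r}$ as $W[\varphi]$-modules with their $\Lambda(\Gamma)$-structure.

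Second, I would pass to $\Gamma^{(p')}$-coinvariants. Since $\Gamma^{(p')}\cong\prod_{\ell\neq p}\Z_\ell$ has order prime to $p$ acting on $p$-power torsion groups, taking $\Gamma^{(p')}$-coinvariants is exact on $p$-primary modules, and more importantly $W_{\Gamma^{(p')}}$, the $\Gamma^{(p')}$-coinvariants of $W=W(\oline{\F}_q)$ with its natural $\Lambda(\Gamma)$-action, should be identified with $W(\F_q^{(p)})$ (or at least a module finitely generated over $W(\mbb F_q)$, which is $\Lambda(\Gamma^{(p)})$-finite). Using the base change isomorphism $H^i_{\mrm{crys}}(C_{\finf}^\sharp/W,E)\cong H^i_{\mrm{crys}}(C_F^\sharp/W(\mbb F_q),E)\otimes_{W(\mbb F_q)}W$, the $\Gamma^{(p')}$-coinvariants of the crystalline cohomology become $H^i_{\mrm{crys}}(C_F^\sharp/W(\mbb F_q),E)\otimes_{W(\mbb F_q)}W(\F_q^{(p)})$, which is a finitely generated $W(\F_q^{(p)})$-module; but $W(\F_q^{(p)})$ is finitely generated (indeed free of rank one) over $\Lambda(\Gamma^{(p)})=\Z_p[[\Z_p]]$ after identifying $W(\F_q^{(p)})$ with a ring on which $\Gamma^{(p)}$ acts — more precisely $W(\F_q^{(p)})$ is a quotient of a power series ring and is a torsion $\Lambda(\Gamma^{(p)})$-module once we kill the Frobenius-fixed part. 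So the coinvariants of the crystalline cohomology groups are finitely generated over $\Lambda(\Gamma^{(p)})$, and by Lemma $\ref{claim:extension}$ the same holds for the relevant kernels and cokernels of ${\bf 1}-\varphi$ on them, hence for $X(D/\finf)_{\Gamma^{(p')}}$ as a finitely generated $\Lambda(\Gamma^{(p)})$-module.

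Third, I would prove the torsion statement. Here the key point is that ${\bf 1}-\varphi$ becomes an \emph{isogeny} after inverting $p$: on a finitely generated $W$-module $M$ with $\sigma$-linear $\varphi$ coming from a Dieudonné crystal, $1-\varphi$ is injective with finite cokernel on $M\otimes\Q_p$ unless $M$ has a sub-isocrystal of slope $0$ with Frobenius eigenvalue $1$, and in any case $\Coker({\bf 1}-\varphi)$ on the divisible cohomology is cofinitely generated over $W$, so its Pontrjagin dual is finitely generated over $W$, hence over $\Lambda(\Gamma^{(p)})$ after coinvariants; finiteness over $W(\F_q^{(p)})$ combined with the fact that $W(\F_q^{(p)})$ is itself $\Lambda(\Gamma^{(p)})$-torsion (the augmentation-type quotient) forces $X(D/\finf)_{\Gamma^{(p')}}$ to be $\Lambda(\Gamma^{(p)})$-torsion. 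I would make this precise by noting that $H^i_{\mrm{crys}}(C_F^\sharp/W(\mbb F_q),E)$ is a finitely generated $W(\mbb F_q)=\Z_p^{\oplus[\F_q:\F_p]}$-module, hence finitely generated over $\Z_p$, and $\Z_p$ viewed inside $\Lambda(\Gamma^{(p)})$ via the augmentation is torsion; so any $\Lambda(\Gamma^{(p)})$-module that is finitely generated over $\Z_p$ is $\Lambda(\Gamma^{(p)})$-torsion. Thus everything reduces to: $X(D/\finf)_{\Gamma^{(p')}}$ is, up to finite kernel/cokernel controlled by Lemma $\ref{lem1-section2}$, built from the $\Z_p$-finitely-generated modules $\Ker$ and $\Coker$ of ${\bf 1}-\varphi$ on crystalline cohomology over $W(\mbb F_q)$, and such modules are automatically finite type torsion over $\Lambda(\Gamma^{(p)})$.

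The main obstacle I anticipate is bookkeeping the $\Lambda(\Gamma)$-module structure carefully through the $\otimes^{\mbb L}\Q_p/\Z_p$ and the $\Gamma^{(p')}$-coinvariants, in particular checking that coinvariants commute appropriately with the crystalline base-change isomorphism and with passing to Pontrjagin duals (coinvariants of $X^\vee$ versus invariants of $X$), and that the $p$-torsion discrepancies in Lemma $\ref{lem1-section2}$ remain negligible after coinvariants — this is where one needs $\Gamma^{(p')}$ to have pro-order prime to $p$ so that $H^1(\Gamma^{(p')},-)$ vanishes on $p$-primary modules and coinvariants are exact. Once that is in place, the finiteness and torsion over $\Lambda(\Gamma^{(p)})$ follow formally from Lemma $\ref{claim:extension}$ together with the elementary fact that a $\Lambda(\Gamma^{(p)})$-module finitely generated over $\Z_p$ is of finite type and torsion.
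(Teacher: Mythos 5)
Your overall framework matches the paper's: use the syntomic triangle to express $X(D/\finf)$ via $\mathrm{Ker}$ and $\mathrm{Coker}$ of ${\bf 1}-\varphi$ on crystalline cohomology, invoke the semilinear-algebra fact (the paper's Lemma $\ref{sblem-section2}$) to control the divisible/$\Q_p$-rational part, and then take $\Gamma^{(p')}$-coinvariants. However, there is a genuine gap in how you handle the $W$-torsion part of the crystalline cohomology groups. You write that the $H^i_{\mrm{crys}}$ are ``up to finite groups of the form $(W/p^?)^{\oplus r}$'' and that after $\Gamma^{(p')}$-coinvariants everything is ``built from the $\Z_p$-finitely-generated modules $\mathrm{Ker}$ and $\mathrm{Coker}$ of ${\bf 1}-\varphi$ on crystalline cohomology over $W(\mbb F_q)$.'' Both claims are false. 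Here $W=W(\oline{\F}_q)$, so $W/p^m = W_m(\oline{\F}_q)$ is not a finite group — it is an infinite $\oline{\F}_q$-module. And taking $\Gamma^{(p')}$-coinvariants of $H^i_{\mrm{crys}}(C_{\finf}^\sharp/W,E) \cong H^i_{\mrm{crys}}(C_F^\sharp/W(\F_q),E)\otimes_{W(\F_q)}W$ does not land you back at $H^i_{\mrm{crys}}(C_F^\sharp/W(\F_q),E)$; it lands you at the $W(\F_q^{(p)})$-module $H^i_{\mrm{crys}}(C_F^\sharp/W(\F_q),E)\otimes_{W(\F_q)}W(\F_q^{(p)})$, and $W(\F_q^{(p)})$ (and a fortiori $\F_q^{(p)}$) is of \emph{infinite} rank over $\Z_p$. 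So the reduction to ``a $\Lambda(\Gamma^{(p)})$-module finitely generated over $\Z_p$ is automatically torsion and of finite type'' does not apply to this torsion piece; that elementary fact is only available for the part of corank controlled by $H^1_{\mrm{syn}}(C_{\finf},\cS_D)\otimes\Q_p$ (the paper's $\mrm{Im}(\alpha)$, handled in Corollary $\ref{cor-section2}$).

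What you are missing is precisely the content of the paper's Claim $\ref{claim:final}$: one must separately show that, for a $\Lambda(\Gamma)$-module of the form $\bigoplus_i W_{n_i}(\oline{\F}_q)$, the $\Gamma^{(p')}$-coinvariants of the Pontrjagin dual of any sub- or quotient-module are finitely generated \emph{torsion} over $\Lambda(\Gamma^{(p)})$, even though they are far from $\Z_p$-finitely generated. This requires the explicit $\Lambda(\Gamma)$-module identification $\oline{\F}_q \cong \F_p[\Gamma^\vee]^{\oplus \mrm{ord}_p(q)}$ (and likewise for $W(\oline{\F}_q)$), so that the duals become ideals/quotients of $\Lambda(\Gamma)/(p)$, whose $\Gamma^{(p')}$-coinvariants are then cyclic $\Lambda(\Gamma^{(p)})/(p)$-modules. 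One also needs the bounded-torsion observation (Claim $\ref{claim:final}.2$) to get from ``$p$-power torsion'' of $H^2_{\mrm{syn}}$ to ``killed by a fixed $p^s$''. Without this structural analysis of $W(\oline{\F}_q)$ as a $\Lambda(\Gamma)$-module, the torsion piece (the paper's $\mrm{Im}(\beta)$, handled in Lemma $\ref{lem3-section2}$) is not controlled, and the proof is incomplete.
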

Recall that the following exact sequence:
\begin{multline*}
\cdots \lra H^1_{\mrm{syn}} (C_{\finf},\mc{S}_{D}) \otimes \Q_p 
\buildrel{\alpha}\over\lra H^1_{\mrm{syn}} (C_{\finf},\mc{S}_{D} 
\otimes \Q_p/\Z_p) \\ 
\buildrel{\beta}\over\lra 
H^2_{\mrm{syn}} (C_{\finf},\mc{S}_{D})\buildrel{\gamma}\over\lra 
H^2_{\mrm{syn}} (C_{\finf},\mc{S}_{D})\otimes \Q_p\lra \cdots 
\end{multline*}
induces a short exact sequence 
\begin{equation}\label{exact-section2}
0\lra \mrm{Im}(\alpha)\lra H^1_{\mrm{syn}} (C_{\finf},\mc{S}_{D} 
\otimes \Q_p/\Z_p)\lra \mrm{Im}(\beta)\lra 0.
\end{equation}
By taking the Pontrjagin dual of $(\ref{exact-section2})$, we have 
\begin{equation}\label{exactbisbis-section2}
0\lra \mrm{Im}(\beta)^{\vee} \lra X(D/\finf) 
\lra \mrm{Im}(\alpha )^{\vee} \lra 0.
\end{equation}
Now, note that taking the $\Gamma^{(p')}$-coinvariant 
of a module over $\La (\Gamma ) \cong \La (\Gamma^{(p)} ) 
\what{\otimes}_{\Z_p} \La (\Gamma^{(p')} ) \cong \La (\Gamma^{(p)} )
[[\Gamma^{(p')}]]$ is equal to taking the base extension of the module 
by $\otimes_{\La (\Gamma )} \La (\Gamma^{(p)})$. 
Since taking the $\Gamma^{(p')}$-coinvariant is a left exact functor, 
the sequence $(\ref{exact-section2})$ gives us the following: 
\begin{equation}\label{exactbis-section2}
(\mrm{Im}(\beta)^{\vee}) _{\Gamma^{(p')}}\lra 
X(D/\finf)_{\Gamma^{(p')}}\lra 
(\mrm{Im}(\alpha)^{\vee})_{\Gamma^{(p')}} \lra 0, 
\end{equation}
where the modules and the morphisms are naturally defined 
over $\La (\Gamma^{(p)})$. 
Thanks to Lemma $\ref{claim:extension}$ and the short exact 
sequence $(\ref{exactbis-section2})$, 
it is enough to prove that 
$(\mrm{Im}(\alpha)^{\vee})_{\Gamma^{(p')}}$ and 
$(\mrm{Im}(\beta)^{\vee}) _{\Gamma^{(p')}}$ 
are torsion $\La (\Gamma^{(p)})$-modules of finite type 
in order to prove Proposition $\ref{theorem-section2}$. 
Thus, Proposition $\ref{theorem-section2}$ follows from 
Corollary $\ref{cor-section2}$ and Lemma $\ref{lem3-section2}$ which 
will be shown below. 
\begin{lem}\label{lem2-section2}
$H^1_{\mrm{syn}} (C_{\finf},\mc{S}_{D})\otimes \Q_p$ is a finite dimensional 
$\Q_p$-vector space.
\end{lem}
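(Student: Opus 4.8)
The plan is to bound $H^1_{\mrm{syn}}(C_{\finf},\mc{S}_D)\otimes\Q_p$ using the defining triangle $(\ref{equation:syn})$ together with Lemma $\ref{lem1-section2}$. Tensoring $(\ref{equation:syn})$ with $\Q_p$ and taking cohomology yields a long exact sequence whose relevant piece reads
\begin{equation*}
H^0_{\mrm{crys}}(C_{\finf}^{\sharp}/W,D(-Z))\otimes\Q_p
\lra H^1_{\mrm{syn}}(C_{\finf},\mc{S}_D)\otimes\Q_p
\lra H^1_{\mrm{crys}}(C_{\finf}^{\sharp}/W,D^0(-Z))\otimes\Q_p
\overset{{\bf 1}-\varphi}{\lra} H^1_{\mrm{crys}}(C_{\finf}^{\sharp}/W,D(-Z))\otimes\Q_p.
\end{equation*}
So it suffices to show the two outer $\Q_p$-vector spaces are finite dimensional; then $H^1_{\mrm{syn}}(C_{\finf},\mc{S}_D)\otimes\Q_p$, being an extension of a subspace of the one by a quotient of the other, is finite dimensional as well.

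The first point is that $H^i_{\mrm{crys}}(C_{\finf}^{\sharp}/W,D(-Z))$ and $H^i_{\mrm{crys}}(C_{\finf}^{\sharp}/W,D^0(-Z))$ are \emph{not} finitely generated over $\Z_p$ — they are finitely generated over $W=W(\oline{\mbb{F}}_q)$ by Lemma $\ref{lem1-section2}(2)$, and $W$ itself is infinite rank over $\Z_p$. So finite dimensionality over $\Q_p$ cannot come from the crystalline cohomology alone; it must come from the map ${\bf 1}-\varphi$, which is $\sigma$-linear. The key step is therefore: for a finitely generated $W$-module $M$ with a $\sigma$-linear operator $\varphi$ (arising, after the base change $H^i_{\mrm{crys}}(C_{\finf}^{\sharp}/W,E)\cong H^i_{\mrm{crys}}(C_F^{\sharp}/W(\F_q),E)\otimes_{W(\F_q)}W$ recorded just before Lemma $\ref{claim:extension}$, from a $\sigma_0$-linear Frobenius on a finitely generated $W(\F_q)$-module $M_0$), the kernel and cokernel of ${\bf 1}-\varphi$ on $M\otimes\Q_p$ are finite dimensional $\Q_p$-vector spaces. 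This is the standard finiteness for $\varphi$-modules / unit-root $F$-isocrystals: writing $M_0\otimes\Q_p$ as a finite-dimensional $\mrm{Frac}(W(\F_q))$-vector space with its Frobenius, Dieudonn\'e--Manin (or a direct argument on the slope decomposition) gives that $\Ker({\bf 1}-\varphi)$ and $\Coker({\bf 1}-\varphi)$ on $M\otimes\Q_p$ are finite over $\Q_p$, and in fact compatible with the base change $W(\F_q)\rightsquigarrow W$ precisely because $\oline{\F}_q^{\,\varphi=1}=\F_p$-type invariants are finite. I would invoke the treatment in \cite{KT} (the syntomic complex there is built from exactly this Frobenius) to keep the argument short.

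Concretely the steps are: (i) apply $\Q_p$-tensor and the cohomology sequence of $(\ref{equation:syn})$ to reduce to finiteness of $\Ker$ and $\Coker$ of ${\bf 1}-\varphi$ on $H^i_{\mrm{crys}}(C_{\finf}^{\sharp}/W,D^0(-Z))\otimes\Q_p\to H^i_{\mrm{crys}}(C_{\finf}^{\sharp}/W,D(-Z))\otimes\Q_p$ for $i=0,1$; (ii) use Lemma $\ref{lem1-section2}(1)$ to replace $D^0(-Z)$ by $D(-Z)$ up to $p$-torsion, so rationally we may work with a single $\varphi$-module $M\otimes\Q_p$ and the operator ${\bf 1}-\varphi$ with $\varphi$ $\sigma$-linear; (iii) descend along the base change to $W(\F_q)$ and apply the Dieudonn\'e--Manin finiteness for $F$-isocrystals over $\mrm{Frac}(W(\F_q))$ to conclude finite dimensionality of kernel and cokernel; (iv) reassemble via the long exact sequence. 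The main obstacle is step (iii): one must be careful that ${\bf 1}-\varphi$ is genuinely $\sigma$-linear (not linear), so finiteness of its kernel and cokernel over $\Q_p$ (rather than over $\mrm{Frac}(W)$) is what must be checked, and one needs the compatibility of this with the $W(\F_q)\to W$ base change — this is exactly where the slope/Dieudonn\'e--Manin input, rather than naive linear algebra over $W$, is essential.
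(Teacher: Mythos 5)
Your proposal takes essentially the same route as the paper: tensor the triangle $(\ref{equation:syn})$ with $\Q_p$, take the long exact sequence, use Lemma~$\ref{lem1-section2}$ to replace $D^0(-Z)$ by $D(-Z)$ rationally, and reduce the assertion to finiteness over $\Q_p$ of $\mrm{Coker}(1-\varphi_0)$ and $\mrm{Ker}(1-\varphi_1)$ acting on finite-dimensional $P_0$-vector spaces, where $P_0=\mrm{Frac}(W(\oline{\F}_q))$. The key input you call ``Dieudonn\'e--Manin finiteness'' is exactly what the paper records as Sublemma~$\ref{sblem-section2}$: for a $\sigma$-linear $\varphi$ on a finite-dimensional $P_0$-vector space, $1-\varphi$ is surjective with finite-dimensional $\Q_p$-kernel. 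One correction to your step~(iii): this classical statement lives over $P_0$, whose residue field $\oline{\F}_q$ is algebraically closed; it does \emph{not} hold over $\mrm{Frac}(W(\F_q))$ (there $1-\varphi$ is in general not surjective on nonzero-slope parts, and the Dieudonn\'e--Manin classification does not apply). So the ``descent to $W(\F_q)$'' you propose points the wrong way — one should stay over $W(\oline{\F}_q)$ and apply the sublemma directly, as the paper does. Likewise your opening sentence, that it suffices to show the two outer terms are finite-dimensional over $\Q_p$, is not literally correct (they are only finite over $P_0$), though you correct this yourself in the next paragraph.
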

\begin{pf} The long exact sequence
\begin{multline*}
\cdots 
\lra H^i_{\mrm{syn}} (C_{\finf},\mc{S}_{D})\otimes \Q_p\lra 
H^i_{\mrm{crys}}(C_{\finf}^{\sharp} /W,D^0(-Z)))\otimes \Q_p \\ 
\buildrel{1-\varphi_i}\over\lra 
H^i_{\mrm{crys}}(C_{\finf}^{\sharp}/W,D(-Z))\otimes \Q_p\lra 
\cdots 
\end{multline*}
can be rewritten 
\begin{multline*}
\cdots \lra H^i_{\mrm{syn}} (C_{\finf},\mc{S}_{D})\otimes \Q_p\lra  
H^i_{\mrm{crys}}(C_{\finf}^{\sharp}/W,D(-Z))\otimes \Q_p \\ 
\buildrel{1-\varphi_i}\over\lra H^i_{\mrm{crys}}(C_{\finf}^{\sharp}/W,D(-Z))
\otimes \Q_p\lra \cdots 
\end{multline*}
thanks to Lemma $\ref{lem1-section2}$.\\ 
We deduce from this long exact sequence the following short exact sequence:
$$ 
0\lra \mrm{Coker}(1-\varphi_0)\to H^1_{\mrm{syn}} (C_{\finf},\mc{S}_{D})\otimes \Q_p\lra \mrm{Ker}(1-\varphi_1)\lra 0.
$$
Since $H^i_{\mrm{crys}}(C_{\finf}^{\sharp}/W,D(-Z))\otimes\Q_p$ 
is a finite dimensional $P_0$-vector space, with $P_0=\mrm{Frac}(W)$, 
the assertion results from the following classical result of which 
we omit the proof: 
\begin{sblem}\label{sblem-section2} Let $V$ be a finite dimensional 
$P_0$-vector space endowed with a $\sigma$-linear operator 
$\varphi:V\lra V$. Then 
$1-\varphi$ is a surjective map whose kernel is a 
finite dimensional $\Q_p$-vector space.
\end{sblem}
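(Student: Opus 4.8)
The plan is to read $(V,\varphi)$ as an $F$-isocrystal over the algebraically closed field $\oline{\F}_q$ and to deduce everything from the Dieudonn\'e--Manin classification together with one genuinely arithmetic input in the slope-zero part. First I would dispose of the possible non-injectivity of $\varphi$: since $\sigma$ is an automorphism of $P_0$, the subspaces $V_\infty=\bigcap_{n\ge 0}\varphi^n(V)$ and $V_{\mrm{nil}}=\bigcup_{n\ge 0}\ker(\varphi^n)$ are $\varphi$-stable and stabilize for $n\gg 0$, and rank--nullity for $\varphi^N$ gives a $\varphi$-stable direct sum decomposition $V=V_\infty\oplus V_{\mrm{nil}}$ with $\varphi$ bijective on $V_\infty$ and nilpotent on $V_{\mrm{nil}}$. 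On $V_{\mrm{nil}}$ the operator $1-\varphi$ is invertible with inverse the finite sum $\sum_{k\ge 0}\varphi^k$, so it contributes nothing, and from here on I may assume $\varphi$ bijective.

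Next I would invoke Dieudonn\'e--Manin, which, because $\oline{\F}_q$ is algebraically closed, gives a $\varphi$-stable decomposition $V=\bigoplus_{\lambda\in\Q}V_\lambda$ into isoclinic components; it then suffices to treat each $V_\lambda$. For a slope $\lambda=r/s>0$ in lowest terms the standard Manin lattice $M\subset V_\lambda$ satisfies $\varphi^s(M)=p^rM\subseteq pM$, so $1-\varphi^s$ is invertible on $M$ by the convergent geometric series (using that $M$ is $p$-adically complete), hence invertible on $V_\lambda$; the telescoping identity $1-\varphi^s=(1-\varphi)\circ(1+\varphi+\cdots+\varphi^{s-1})$, valid for $\sigma$-linear $\varphi$, then forces $1-\varphi$ to be bijective, in particular surjective with trivial kernel. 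For $\lambda<0$ the map $\varphi$ is invertible and $\varphi^{-1}$ is $\sigma^{-1}$-linear of slope $-\lambda>0$, so the previous case gives $1-\varphi^{-1}$ bijective, and $1-\varphi=-\varphi\circ(1-\varphi^{-1})$ is bijective as well.

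For the slope-zero component, Dieudonn\'e--Manin says it is trivial, $V_0\cong(P_0,\sigma)^{\oplus m}$, so I would only need to show that $1-\sigma\colon P_0\to P_0$ is surjective with kernel $\Q_p$. On $W=W(\oline{\F}_q)$ this is a successive-approximation argument: given $a\in W$, reducing $b-\sigma(b)=a$ modulo $p$ produces the Artin--Schreier equation $\bar b-\bar b^{\,p}=\bar a$ over $\oline{\F}_q$, solvable exactly because $\oline{\F}_q$ is algebraically closed; lifting a solution, subtracting, dividing by $p$, and iterating gives a $p$-adically convergent solution $b=\sum_{n\ge 0}p^nb_n$. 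The same recursion started from $a=0$ identifies $\ker(1-\sigma\mid W)$ with $\Z_p$ via Teichm\"uller lifts, and inverting $p$ yields surjectivity on $P_0$ with kernel $\Q_p$. Putting the three cases together, $1-\varphi$ is surjective on all of $V$ and $\ker(1-\varphi)$ is the finite-dimensional $\Q_p$-space carried by the unit-root part.

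The hard part is really only the slope-zero case --- equivalently, the triviality of unit-root $F$-isocrystals over an algebraically closed residue field and the consequent surjectivity of $1-\sigma$; all the rest comes down to convergence of $p$-adic geometric series and the formal Dieudonn\'e--Manin splitting. If one wants to avoid the full Dieudonn\'e--Manin decomposition, an alternative is a d\'evissage along the always-available slope filtration, using the snake lemma to propagate surjectivity and finiteness of the kernel through the graded pieces; but the unit-root input remains indispensable.
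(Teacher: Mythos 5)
The paper does not actually supply a proof of this sub-lemma: it declares it a ``classical result of which we omit the proof.'' So there is no argument of the authors to compare yours against; what you have written is a self-contained proof, and it is correct.

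Your route is the Dieudonn\'e--Manin route. The Fitting splitting $V=V_\infty\oplus V_{\mathrm{nil}}$ for a $\sigma$-linear $\varphi$ is legitimate because $\sigma$ is an automorphism of $P_0$ (so $\ker\varphi^n$ and $\mathrm{im}\,\varphi^n$ are genuine $P_0$-subspaces and rank--nullity over $P_0$ applies); on the nilpotent summand $1-\varphi$ is invertible by the finite geometric series, and on $V_\infty$ the isoclinic decomposition over the algebraically closed residue field reduces everything to slopes. Your treatment of slope $\lambda=r/s>0$ is fine: $\varphi^s(M)=p^rM\subset pM$ gives convergence of $\sum_k\varphi^{sk}$ on the $p$-adically complete lattice $M$, hence bijectivity of $1-\varphi^s$ on $M[1/p]=V_\lambda$, and both factorizations $1-\varphi^s=(1-\varphi)(1+\cdots+\varphi^{s-1})=(1+\cdots+\varphi^{s-1})(1-\varphi)$ are available since $1$ and $\varphi$ commute as operators, yielding bijectivity of $1-\varphi$. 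Slope $<0$ reduces to slope $>0$ via $\varphi^{-1}$ as you say. The only genuinely arithmetic point, as you emphasize, is the slope-zero part $V_0\cong(P_0,\sigma)^{\oplus m}$, where the surjectivity of $1-\sigma$ on $W(\overline{\F}_q)$ is the successive-approximation / Artin--Schreier argument and $\ker(1-\sigma)=W^{\sigma=1}=\Z_p$; inverting $p$ gives kernel $\Q_p$ and hence $\ker(1-\varphi)\cong\Q_p^{\oplus m}$ on all of $V$. In short: correct, complete, and a reasonable way to fill a gap the paper leaves to the reader. (One could avoid invoking the full Dieudonn\'e--Manin classification by working with the slope filtration and a snake-lemma d\'evissage, as you remark at the end, but the essential input --- triviality of the unit-root part over $\overline{\F}_q$ and surjectivity of $1-\sigma$ --- is unavoidable either way.)
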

\end{pf}
\begin{cor}\label{cor-section2} 
The Pontrjagin dual $\mrm{Im}(\alpha)^{\lor}$ of $\mrm{Im}(\alpha)$ is a free 
$\Z_p$-module whose rank is equal to 
$\mrm{dim}_{\Q_p}(H^1_{\mrm{syn}} (C_{\finf},\mc{S}_{D})\otimes \Q_p)$.
In particular, $(\mrm{Im}(\alpha)^{\lor})_{\Gamma^{(p')}}$ is a 
finitely generated torsion $\La (\Gamma^{(p)})$-module. 
\end{cor}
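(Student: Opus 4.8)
The plan is to establish two things about $\mathrm{Im}(\alpha)$: that its Pontrjagin dual is a \emph{free} $\Z_p$-module, and that the $\Z_p$-rank coincides with $\dim_{\Q_p} H^1_{\mathrm{syn}}(C_{\finf},\mc{S}_D)\otimes\Q_p$; once this is done, the $\La(\Gamma^{(p)})$-statement follows formally. First I would unwind the definition of $\mathrm{Im}(\alpha)$: by construction $\alpha$ is the map $H^1_{\mathrm{syn}}(C_{\finf},\mc{S}_D)\otimes\Q_p \to H^1_{\mathrm{syn}}(C_{\finf},\mc{S}_D\otimes\Q_p/\Z_p)$ coming from the long exact sequence relating $R\Gamma(C_{\finf},\mc{S}_D)$, its $\Q_p$-localization, and its derived reduction modulo $\Z_p$. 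Writing $M:=H^1_{\mathrm{syn}}(C_{\finf},\mc{S}_D)$, the source of $\alpha$ is $M\otimes\Q_p$ and a standard diagram chase in the exact triangle $\mc{S}_D \to \mc{S}_D\otimes\Q_p \to \mc{S}_D\otimes\Q_p/\Z_p$ identifies $\mathrm{Im}(\alpha)$ with $(M\otimes\Q_p)/\mathrm{Im}(M\to M\otimes\Q_p)$, i.e.\ with the divisible quotient $(M\otimes\Q_p/\Z_p)\big/(M_{\mathrm{tors}}\otimes\Q_p/\Z_p)$ — concretely $\mathrm{Im}(\alpha)\cong (M/M_{\mathrm{tors}})\otimes\Q_p/\Z_p$.

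Next I would take Pontrjagin duals. For a finitely generated $\Z_p$-module, say $N$ free of rank $r$, one has $\mathrm{Hom}_{\mathrm{cont}}(N\otimes\Q_p/\Z_p,\,\Q_p/\Z_p)\cong N^{\vee\vee}\cong \varprojlim_n N/p^n N \cong \Z_p^{\,r}$, which is free of rank $r$. Applying this with $N=M/M_{\mathrm{tors}}$ gives that $\mathrm{Im}(\alpha)^\vee$ is free over $\Z_p$ of rank $\mathrm{rank}_{\Z_p} M = \dim_{\Q_p}(M\otimes\Q_p) = \dim_{\Q_p} H^1_{\mathrm{syn}}(C_{\finf},\mc{S}_D)\otimes\Q_p$, and the latter dimension is finite by Lemma~\ref{lem2-section2}. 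This proves the first two assertions of the corollary.

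For the final sentence, I need that $(\mathrm{Im}(\alpha)^\vee)_{\Gamma^{(p')}}$ is finitely generated and torsion over $\La(\Gamma^{(p)})$. Since $\mathrm{Im}(\alpha)^\vee$ is a finitely generated $\Z_p$-module, a fortiori it is a finitely generated $\La(\Gamma^{(p)})$-module (the $\Gamma^{(p)}$-action is through a finite rank $\Z_p$-module, so it is even finitely generated over $\Z_p\subset\La(\Gamma^{(p)})$ after passing to coinvariants); hence its quotient $(\mathrm{Im}(\alpha)^\vee)_{\Gamma^{(p')}}$ is finitely generated over $\La(\Gamma^{(p)})$. It is torsion because any finitely generated $\Z_p$-module is a torsion $\La(\Gamma^{(p)})$-module (a topological generator $\gamma-1$ of the augmentation does not annihilate it, but $\Z_p$ has Krull dimension $1<2=\dim\La(\Gamma^{(p)})$, so it has positive codimension; concretely, multiplication by $\gamma^{p^n}-1$ for suitable $n$, or simply the fact that a nonzero element of $\La(\Gamma^{(p)})$ not in the support suffices). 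The cleanest phrasing: a module that is finitely generated over $\Z_p$ is killed by $(\gamma-1)^k$ for no $k$ in general, so instead note it is finitely generated over the subring $\Z_p$, hence has $\La(\Gamma^{(p)})$-rank zero, hence is torsion since $\La(\Gamma^{(p)})$ is a domain.

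I do not expect a genuine obstacle here; the only mildly delicate point is the identification of $\mathrm{Im}(\alpha)$ with the divisible module $(M/M_{\mathrm{tors}})\otimes\Q_p/\Z_p$ and the commutation of Pontrjagin duality with the various limits, which is routine once one knows (from Lemma~\ref{lem2-section2}) that $M\otimes\Q_p$ is finite-dimensional so that $M$ has finite $\Z_p$-rank. The argument is essentially bookkeeping with a single finitely generated $\Z_p$-module, and the assertion that such a module is a finitely generated torsion $\La(\Gamma^{(p)})$-module is the same elementary observation used throughout cyclotomic Iwasawa theory.
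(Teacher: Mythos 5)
There is a genuine gap in your argument, and it is precisely the point the paper has to work for. You correctly identify $\mathrm{Im}(\alpha)$ with the cokernel of $M\to M\otimes\Q_p$ (with $M:=H^1_{\mrm{syn}}(C_{\finf},\mc{S}_D)$), i.e.\ with $(M/M_{\mathrm{tors}})\otimes\Q_p/\Z_p$. But you then apply the Pontrjagin-duality computation for a \emph{finitely generated free} $\Z_p$-module to $N:=M/M_{\mathrm{tors}}$, and nothing you cite justifies that $N$ is finitely generated over $\Z_p$. Lemma~\ref{lem2-section2} only gives that $M\otimes\Q_p$ is finite-dimensional, i.e.\ that $N$ has finite $\Z_p$-\emph{rank}; it does not give that $N$ is a lattice. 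A torsion-free $\Z_p$-module of finite rank $d$ need not be free: for instance if $N\cong\Z_p^{d-1}\oplus\Q_p$, then $N\otimes\Q_p$ still has dimension $d$ but $N\otimes\Q_p/\Z_p\cong(\Q_p/\Z_p)^{d-1}$, so $\mathrm{Im}(\alpha)^\vee$ would be free of rank $d-1$, not $d$. In general one only gets the inequality $\mathrm{corank}\,\mathrm{Im}(\alpha)\le d$, which is exactly where the paper's proof first stops as well.

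The missing ingredient, supplied in the paper, is the observation that $\mathrm{Ker}(\alpha)\cong M/M_{\mathrm{tors}}$ (sitting inside $M\otimes\Q_p$) contains no non-zero $p$-divisible element, which forces $N$ to be a genuine lattice of full rank $d$ in $M\otimes\Q_p$ and yields equality. This in turn is not formal: it uses the structure of $H^1_{\mrm{syn}}(C_{\finf},\mc{S}_D)$ coming from the fundamental triangle $(\ref{equation:syn})$, namely that it is an extension $0\to\mathrm{Coker}(1-\varphi_0)\to H^1_{\mrm{syn}}\to\mathrm{Ker}(1-\varphi_1)\to 0$ of a submodule of $H^1_{\mrm{crys}}(C_{\finf}^{\sharp}/W,D^0(-Z))$ by a quotient of $H^0_{\mrm{crys}}(C_{\finf}^{\sharp}/W,D(-Z))$, both of which are finitely generated $W$-modules by Lemma~\ref{lem1-section2}. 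Your proposal treats exactly this step as ``routine bookkeeping,'' which is where it goes wrong. (Separately, your parenthetical justification of the torsion claim over $\La(\Gamma^{(p)})$ is muddled — ``multiplication by $\gamma^{p^n}-1$ for suitable $n$'' does not annihilate a general $\Z_p[[\Gamma^{(p)}]]$-module that is $\Z_p$-finite — but your final phrasing via Cayley--Hamilton, that any finitely generated $\Z_p$-module has $\La(\Gamma^{(p)})$-rank zero and is hence torsion over the domain $\La(\Gamma^{(p)})$, is correct and repairs it.)
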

\begin{proof}[Proof of Corollary $\ref{cor-section2}$]
By definition of the syntomic complex we have a long exact sequence
\begin{multline*}
\cdots \lra 
H^i_{\mrm{syn}} (C_{\finf},\mc{S}_{D}\otimes \Q_p/\Z_p)\lra 
H^i_{\mrm{crys}}(C_{\finf}^{\sharp}/W,D^0(-Z)\otimes \Q_p/\Z_p) \\ 
\buildrel{1-\varphi_i}\over\lra  H^i_{\mrm{crys}}(C_{\finf}^{\sharp}/W,D(-Z)
\otimes \Q_p/\Z_p)\lra \cdots 
\end{multline*}
where the middle and right handside modules are torsion. In particular, 
$H^1_{\mrm{syn}} (C_{\finf},\mc{S}_{D}\otimes \Q_p/\Z_p)$ is a torsion 
$\Z_p$-module. 
Thus, $\mrm{Im}(\alpha)$ is a torsion $\Z_p$-module 
which is a quotient of $H^1_{\mrm{syn}} (C_{\finf},\mc{S}_{D}) 
\otimes \Q_p$. We deduce from \ref{lem2-section2}, that $\mrm{Im}(\alpha)$ 
is cofree of finite rank $n\leq \mrm{dim}_{\Q_p}(H^1_{\mrm{syn}} 
(C_{\finf},\mc{S}_{D})\otimes \Q_p)$. 
But since $H^1_{\mrm{syn}} (C_{\finf},\mc{S}_{D})$ 
is an extension of a submodule of 
$H^1_{\mrm{crys}}(C_{\finf}^{\sharp}/W,D^0(-Z))$ by a quotient of 
$H^0_{\mrm{crys}}(C_{\finf}^{\sharp}/W,D(-Z))$, $\mrm{Ker}(\alpha)$ 
contains no non-zero $p$-divisible elements. 
This proves that the corank of $\mrm{Im}(\alpha)$ is equal to 
$\mrm{dim}_{\Q_p}(H^1_{\mrm{syn}} (C_{\finf},\mc{S}_{D})\otimes \Q_p)$. 
\end{proof} 
We now study the term $\mrm{Im}(\beta):$ 
\begin{lem}\label{lem3-section2}  
$(\mrm{Im}(\beta )^{\lor})_{\Gamma^{(p')}}$ is a 
finitely generated torsion $\La (\Gamma^{(p)})$-module.
\end{lem}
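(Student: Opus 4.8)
The plan is to identify $\mrm{Im}(\beta)$ with a finite modification of the cokernel of ${\bf 1}-\varphi_1$ in crystalline cohomology, to show that this cokernel is annihilated by a fixed power of $p$, and then to analyse its reduction modulo $p$, which turns out to be governed by the $\mbb Z_p$-extension $\mbb F_q^{(p)}$ of $\mbb F_q$.

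\emph{Step 1 (reduction to $\Coker({\bf 1}-\varphi_1)$).} By exactness of the long exact sequence preceding $(\ref{exact-section2})$, $\mrm{Im}(\beta)=\Ker(\gamma)$ is the $\mbb Z_p$-torsion submodule of $H^2_{\mrm{syn}}(C_{\finf},\mc{S}_{D})$. The distinguished triangle $(\ref{equation:syn})$ yields a $\La(\Gamma)$-linear short exact sequence
\[
0\lra \Coker({\bf 1}-\varphi_1)\lra H^2_{\mrm{syn}}(C_{\finf},\mc{S}_{D})\lra \Ker({\bf 1}-\varphi_2)\lra 0,
\]
where ${\bf 1}-\varphi_i\colon H^i_{\mrm{crys}}(C_{\finf}^{\sharp}/W,D^0(-Z))\lra H^i_{\mrm{crys}}(C_{\finf}^{\sharp}/W,D(-Z))$. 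As in the proof of Lemma $\ref{lem2-section2}$, Lemma $\ref{lem1-section2}$ together with Sublemma $\ref{sblem-section2}$ shows that ${\bf 1}-\varphi_i$ becomes surjective after $\otimes\,\Q_p$ with finite-dimensional $\Q_p$-kernel; hence $\Coker({\bf 1}-\varphi_1)$ is $\mbb Z_p$-torsion, the torsion-free quotient of $\Ker({\bf 1}-\varphi_2)$ is a finitely generated $\mbb Z_p$-module, and its torsion submodule is finite (dévissage along the $p$-adic filtration of the finitely generated $W$-module $H^2_{\mrm{crys}}(C_{\finf}^{\sharp}/W,D^0(-Z))_{\mrm{tors}}$, using that $1-\varphi$ has finite kernel on a finite-dimensional $\oline{\mbb F}_q$-vector space carrying a $\sigma$-semilinear operator). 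Thus $\mrm{Im}(\beta)$ is an extension of a finite group by $\Coker({\bf 1}-\varphi_1)$; dualising and applying Lemma $\ref{claim:extension}$, it suffices to prove that $(\Coker({\bf 1}-\varphi_1)^{\vee})_{\Gamma^{(p')}}$ is a finitely generated torsion $\La(\Gamma^{(p)})$-module.

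\emph{Step 2 ($\Coker({\bf 1}-\varphi_1)$ has bounded exponent; reduction mod $p$).} After tensoring with $P_0=\mrm{Frac}(W)$ and using Lemma $\ref{lem1-section2}$ to identify source and target, $({\bf 1}-\varphi_1)\otimes P_0$ becomes $\mrm{id}-\psi$ for a $\sigma$-semilinear endomorphism $\psi$ of a finite-dimensional $P_0$-vector space $V$. Since the residue field $\oline{\mbb F}_q$ of $W$ is algebraically closed, Dieudonn\'e--Manin decomposes $(V,\psi)$ into a unit-root summand and a summand of strictly positive slopes; choosing adapted $\psi$-stable $W$-lattices, $\mrm{id}-\psi$ is bijective on the positive-slope lattice and, on the unit-root lattice $N\otimes_{\mbb Z_p}W$ with $\psi=\mrm{id}\otimes\sigma$ on a fixed basis, equals $\mrm{id}\otimes(\mrm{id}-\sigma)$, which is onto because $1-\sigma$ is surjective on $W$. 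As $W$-lattices in $V$ are commensurable up to a bounded power of $p$, $({\bf 1}-\varphi_1)\bigl(H^1_{\mrm{crys}}(C_{\finf}^{\sharp}/W,D^0(-Z))\bigr)$ contains $p^{N}H^1_{\mrm{crys}}(C_{\finf}^{\sharp}/W,D(-Z))$ for some $N$; together with the bounded exponent of the torsion of $H^1_{\mrm{crys}}(C_{\finf}^{\sharp}/W,D(-Z))$, this shows $C:=\Coker({\bf 1}-\varphi_1)$ is killed by a fixed power of $p$. Finally, crystalline base change gives $H^1_{\mrm{crys}}(C_{\finf}^{\sharp}/W,D(-Z))/p\cong \oline M\otimes_{\mbb F_q}\oline{\mbb F}_q$, with $\Gamma$ acting through the second factor and $\oline M:=H^1_{\mrm{crys}}(C_F^{\sharp}/W(\mbb F_q),D(-Z))/p$ finite-dimensional over $\mbb F_q$; hence $C/pC=\Coker({\bf 1}-\varphi_1\bmod p)$ is a $\Gamma$-equivariant quotient of $\oline M\otimes_{\mbb F_q}\oline{\mbb F}_q$.

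\emph{Step 3 (the $\Gamma^{(p')}$-coinvariants).} Filtering $C$ by the $p^iC$, each graded piece is a $\La(\Gamma)$-linear quotient of $C/pC$, hence of $\oline M\otimes_{\mbb F_q}\oline{\mbb F}_q$; call such a quotient $E$. Because $\Gamma^{(p')}\cong\prod_{l\neq p}\mbb Z_l$ has pro-order prime to $p$ and $E$ is $p$-torsion, $(-)^{\Gamma^{(p')}}$ is exact on $E$, so $E^{\Gamma^{(p')}}$ is a $\Gamma^{(p)}$-equivariant quotient of $(\oline M\otimes_{\mbb F_q}\oline{\mbb F}_q)^{\Gamma^{(p')}}=\oline M\otimes_{\mbb F_q}\mbb F_q^{(p)}$, where $\mbb F_q^{(p)}=(\oline{\mbb F}_q)^{\Gamma^{(p')}}$ is the $\mbb Z_p$-extension of $\mbb F_q$. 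Writing $\mbb F_q^{(p)}=\varinjlim_n\mbb F_{q^{p^n}}$ and using the normal basis theorem, $\mbb F_{q^{p^n}}\cong \mbb F_q[T]/T^{p^n}$ as a $\La(\Gamma^{(p)})/p=\mbb F_p[[T]]$-module ($T=\gamma-1$, the class of a topological generator); this ring is Gorenstein, its $\mbb F_p$-dual is of the same shape, the transition maps dualise to the reductions $\mbb F_q[T]/T^{p^{n+1}}\to\mbb F_q[T]/T^{p^n}$, and therefore $(\mbb F_q^{(p)})^{\vee}\cong\mbb F_q[[T]]$ is finitely generated over $\La(\Gamma^{(p)})$. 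Hence $(E^{\vee})_{\Gamma^{(p')}}=(E^{\Gamma^{(p')}})^{\vee}$ embeds into a finitely generated module over the Noetherian ring $\La(\Gamma^{(p)})=\mbb Z_p[[T]]$, so it is finitely generated, and being $p$-torsion it is torsion over $\La(\Gamma^{(p)})$. Running this through the Pontrjagin dual of the $p$-adic filtration of $C$ and applying Lemma $\ref{claim:extension}$ repeatedly shows that $(C^{\vee})_{\Gamma^{(p')}}$, and hence $(\mrm{Im}(\beta)^{\vee})_{\Gamma^{(p')}}$, is a finitely generated torsion $\La(\Gamma^{(p)})$-module.

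The one genuinely non-formal input is Step 2: the Frobenius structure (Dieudonn\'e--Manin over $P_0$ and surjectivity of $1-\sigma$ on $W$) must be used to force $\Coker({\bf 1}-\varphi_1)$ to have bounded exponent, which is what licenses the reduction modulo $p$; thereafter everything rests on the elementary but crucial fact that the Pontrjagin dual of $\mbb F_q^{(p)}$ is finitely generated over $\La(\Gamma^{(p)})$ — equivalently, that $\La(\Gamma^{(p)})$ is Noetherian whereas $\La(\Gamma)$ is not. I expect the main care in writing up to be in checking compatibility of the $\La(\Gamma)$-structures with crystalline base change (so that $\Gamma$ indeed acts only through the coefficient factor $W$) and in keeping the various torsion and finite-group contributions assembled along the way under control.
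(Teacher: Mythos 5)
Your Step 3 is the right key idea and is essentially the paper's Claim~$\ref{claim:final}$: the whole argument turns on the fact that, after taking $\Gamma^{(p')}$-coinvariants, $\mbb{F}_p[[\Gamma]]$ collapses to $\mbb{F}_p[[\Gamma^{(p)}]]\cong\La(\Gamma^{(p)})/(p)$, which is finitely generated and torsion over the Noetherian ring $\La(\Gamma^{(p)})$. But Step~1 contains a genuine gap, and Step~2 has a secondary weak point.

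The gap in Step~1: you claim that the $\Z_p$-torsion of $\Ker(\mathbf{1}-\varphi_2)$ is \emph{finite}, justified by ``$1-\varphi$ has finite kernel on a finite-dimensional $\oline{\mbb F}_q$-vector space carrying a $\sigma$-semilinear operator.'' This would be correct for a map of the form $\mathrm{id}-\psi$ with $\psi$ $\sigma$-semilinear on a single $\oline{\mbb F}_q$-vector space, but the map here is $\mathbf{1}-\varphi_2\colon H^2_{\mrm{crys}}(D^0(-Z))\to H^2_{\mrm{crys}}(D(-Z))$, where $\mathbf{1}$ is the natural comparison map (not the identity, not even an isomorphism) whose kernel and cokernel are only killed by $p$ (Lemma~$\ref{lem1-section2}$). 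On the $W$-torsion submodules --- which are of the form $\bigoplus_i W_{n_i}(\oline{\mbb F}_q)$ and hence \emph{infinite} groups --- $\mathbf{1}$ can in principle vanish, and then $\Ker(\mathbf{1}-\varphi_2)$ contains the full (infinite) torsion subgroup or a large piece of it. There is no reason $\Ker(\mathbf{1}-\varphi_2)(p)$ should be finite, and the paper makes no such claim; instead it treats $\Ker(\mathbf{1}-\varphi_2)(p)$ as a $\La(\Gamma)$-\emph{submodule} of $H^2_{\mrm{crys}}(D^0(-Z))(p)\cong\bigoplus_i W_{n_i}(\oline{\mbb F}_q)$ and handles it by the same structural analysis (Claim~$\ref{claim:final}$) it uses for the cokernel. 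Your Step~3 machinery does apply to this term as well (it is killed by a bounded power of $p$ by the structure of $\bigoplus W_{n_i}$), so the gap is patchable, but the finiteness claim as stated should be dropped and replaced by the submodule case of your Step~3 argument.

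Secondary weakness in Step~2: the assertion that $(\mathbf{1}-\varphi_1)\bigl(H^1_{\mrm{crys}}(D^0(-Z))\bigr)$ contains $p^N H^1_{\mrm{crys}}(D(-Z))$ is justified by ``$W$-lattices in $V$ are commensurable up to a bounded power of $p$,'' but the image is only a $\Z_p$-submodule (indeed $\mathbf{1}-\varphi_1$ is merely $\Z_p$-linear, since $\varphi$ is $\sigma$-semilinear), not evidently a $W$-lattice, so the commensurability principle does not apply directly. The paper sidesteps this entirely: it uses only that $\Coker(\mathbf{1}-\varphi_1)$ is a $\La(\Gamma)$-linear quotient of a finitely generated $W$-module, together with Claim~$\ref{claim:final}$.2 (which classifies $\La(\Gamma)$-quotients of $W(\oline{\mbb F}_q)$), to get the bounded exponent. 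Rewriting Step~2 along those lines --- or simply working directly with the $p$-power torsion parts of the exact sequence $0\to\Coker(\mathbf{1}-\varphi_1)\to H^2_{\mrm{syn}}\to\Ker(\mathbf{1}-\varphi_2)\to 0$ as in the paper, without the finiteness/bounded-exponent detours --- would make the argument watertight.
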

\begin{proof}[Proof of Lemma $\ref{lem3-section2}$] 
Note that we have an isomorphism $\mrm{Im}(\beta)\simeq \Ker(\gamma)$. 
The kernel of the map 
$\gamma:H^2_{\mrm{syn}} (C_{\finf},\mc{S}_{D})\lra 
H^2_{\mrm{syn}} (C_{\finf},\mc{S}_{D})\otimes \Q_p$ is 
$H^2_{\mrm{syn}} (C_{\finf},\mc{S}_{D})(p)$. 
Recall that we have a short exact sequence: 
$$ 
0\lra \mrm{Coker}(1-\varphi_1)\lra H^2_{\mrm{syn}} (C_{\finf},\mc{S}_{D})
\lra \mrm{Ker}(1-\varphi_2)\lra 0,
$$
By taking $p$-power torsion part of this sequence, we have 
\begin{equation}\label{exact2-section2}
0\lra \mrm{Coker}(1-\varphi_1)(p) \lra 
\mrm{Im}(\beta)
\lra \mrm{Ker}(1-\varphi_2) (p).
\end{equation} 
Taking the Pontrjagin duals and then taking the 
$\Gamma^{(p')}$-coinvariants of the modules, we have the following: 
\begin{equation}\label{exact2bisbis-section2}
(\mrm{Ker}(1-\varphi_2) (p)^{\vee})_{\Gamma^{(p')}} 
\lra (\mrm{Im}(\beta)^{\vee})_{\Gamma^{(p')}}  \lra 
(\mrm{Coker}(1-\varphi_1)(p) ^{\vee}) _{\Gamma^{(p')}} 
\lra 0,
\end{equation}
where the modules and the morphisms are defined 
over $\La (\Gamma^{(p)})$ as discussed around 
$(\ref{exactbis-section2})$. 
By the sequence $(\ref{exact2bisbis-section2})$ and Lemma 
$\ref{claim:extension}$, 
it is enough to show that 
$(\mrm{Coker}(1-\varphi_1)(p) ^{\vee}) _{\Gamma^{(p')}}$ and 
$(\mrm{Ker}(1-\varphi_2) (p)^{\vee})_{\Gamma^{(p')}}$ are 
finitely generated torsion 
$\La (\Gamma^{(p)})$-modules. 
\par
By the triangle $(\ref{equation:syn})$, 
$\mrm{Coker}(1-\varphi_1)$ 
(resp. $\mrm{Ker}(1-\varphi_2)$) is a quotient (resp. submodule) 
of $H^1_{\mrm{crys}}(C_{\finf}^{\sharp}/W,D(-Z))$ 
(resp. $H^2_{\mrm{crys}}(C_{\finf}^{\sharp}/W,D^0(-Z))$). 
By Lemma $\ref{lem1-section2}$, both 
$H^i_{\mrm{crys}}(C_{\finf}^{\sharp}/W,D^0(-Z))$ 
and $H^i_{\mrm{crys}}(C_{\finf}^{\sharp}/W,D(-Z))$ are 
  finitely generated $W(\oline{\F}_q)$-modules. 
  Hence, they are of the form 
  $W(\oline{\F}_q)^{\oplus r_0} \oplus 
  \bigoplus_{i=1}^r W_{n_i}(\oline{\F}_q)$, 
  where the $n_i\ge 1$ are not necessarily distinct. 
Now, we will terminate the proof admitting 
the following claim, which will be shown below: 
\begin{claim}\label{claim:final}
\begin{enumerate}
\item 
Let $M$ be a $\La (\Gamma )$-module such that 
$M \cong \oplus_{i=1}^r W_{n_i}(\oline{\F}_q)$, 
where the $n_i\ge 1$ are not necessarily distinct. 
Then 
\begin{enumerate}
\item 
For any $\La (\Gamma )$-linear quotient $N$ of $M$, 
$({N}^{\vee})_{\Gamma^{(p')}}$ is a finitely generated 
torsion $\La (\Gamma^{(p)} )$-module. 
\item 
For any $\La (\Gamma )$-submodule $N'$ of $M$, 
$({N'}^{\vee})_{\Gamma^{(p')}}$ is a finitely generated 
torsion $\La (\Gamma^{(p)} )$-module. 
\end{enumerate}
\item 
For any $\La (\Gamma )$-linear quotient $S$ of the 
$\La (\Gamma )$-module $W(\oline{\mathbb{F}}_q)$, 
$S (p)$ is equal to $S [p^t ]$ for a sufficiently large 
integer $t$. 
\end{enumerate}
\end{claim}
For $\mrm{Ker}(1-\varphi_2)(p)^\lor$, we have the sequence:
$$
H^2_{\mrm{crys}}(C_{\finf}^{\sharp}/W,D^0(-Z))(p)^\lor 
\lra \mrm{Ker}(1-\varphi_2)(p)^\lor \lra 0 .
$$
Taking the $\Gamma^{(p')}$-coinvariant, we have: 
$$
(H^2_{\mrm{crys}}(C_{\finf}^{\sharp}/W,D^0(-Z))(p)^\lor )_{\Gamma^{(p')}} 
\lra (\mrm{Ker}(1-\varphi_2)(p)^\lor )_{\Gamma^{(p')}} \lra 0 .
$$
Since $(H^2_{\mrm{crys}}(C_{\finf}^{\sharp}/W,D^0(-Z))(p)^\lor )
_{\Gamma^{(p')}} $ is a finitely generated 
torsion $\La (\Gamma^{(p)} )$-module by Claim $\ref{claim:final}.1$, 
$(\mrm{Ker}(1-\varphi_2)(p)^\lor )_{\Gamma^{(p')}}$ is 
a finitely generated torsion $\La (\Gamma^{(p)} )$-module by 
Lemma $\ref{claim:extension}$. 
For $\mrm{Coker}(1-\varphi_1)(p)^\lor$, we start from the following 
exact sequence of $\La (\Gamma )$-modules: 
$$
0 \lra 
\mrm{Im}(1-\varphi_1) \lra H^1_{\mrm{crys}}(C_{\finf}^{\sharp}/W,D(-Z)) 
\lra \mrm{Coker}(1-\varphi_1) \lra 0 .
$$
By Claim $\ref{claim:final}.2$ and by the remark on the structure on 
$H^1_{\mrm{crys}}(C_{\finf}^{\sharp}/W,D(-Z)) $ given before, 
there is a sufficiently large $s$ such that 
$\mrm{Im}(1-\varphi_1)(p)$ 
(resp. $H^1_{\mrm{crys}}(C_{\finf}^{\sharp}/W,D(-Z))(p)$, 
$\mrm{Coker}(1-\varphi_1)(p)$) is equal to 
$\mrm{Im}(1-\varphi_1)[p^s ]$ 
(resp. $H^1_{\mrm{crys}}(C_{\finf}^{\sharp}/W,D(-Z))[p^s ]$, 
$\mrm{Coker}(1-\varphi_1)[p^s ]$ ). 
Hence, we have the following sequence: 
\small 
\begin{equation}\label{equation:subquobyGp}
\left( \mrm{Im}(1-\varphi_1)/(p^s)\mrm{Im}(1-\varphi_1) \right)^\lor 
\overset{a}{\lra} \mrm{Coker}(1-\varphi_1)(p)^\lor 
\overset{b}{\lra} 
H^1_{\mrm{crys}}(C_{\finf}^{\sharp}/W,D(-Z))(p)^\lor 
\end{equation}
\normalsize 
As is explained above, 
the image of the map $b$ in $(\ref{equation:subquobyGp})$ 
is of the form $N^{\vee}$ for a $\La (\Gamma )$-linear quotient 
$N$ of $\oplus_{i=1}^r W_{n_i}(\oline{\F}_q)$. 
Since $\mrm{Im}(1-\varphi_1)/(p^s)\mrm{Im}(1-\varphi_1) $ is a 
$\La (\Gamma )$-linear quotient of 
$H^1_{\mrm{crys}}(C_{\finf}^{\sharp}/W,D^0(-Z))/(p^s) 
H^1_{\mrm{crys}}(C_{\finf}^{\sharp}/W,D^0(-Z))$, 
the image of the map $a$ is of the form ${N'}^{\vee}$ for a 
$\La (\Gamma )$-linear 
submodule $N'$ of $\oplus_{j=1}^{r'} W_{n'_j}(\oline{\F}_q)$. 
Since $\Gamma^{(p')}$ is pro-cyclic group, 
the sequence $(\ref{equation:subquobyGp})$ induces the following sequence: 
\begin{equation}\label{equation:subquobyGpbis} 
({N'}^{\vee})_{\Gamma^{(p')}} \lra 
(\mrm{Coker}(1-\varphi_1)(p)^\lor )_{\Gamma^{(p')}} \lra 
(N^{\vee})_{\Gamma^{(p')}} \lra 0 . 
\end{equation}
Since the modules on the right and the left are finitely generated 
torsion $\La (\Gamma^{(p)} )$-modules by Claim $\ref{claim:final}.1$, 
$(\mrm{Coker}(1-\varphi_1)(p)^\lor )_{\Gamma^{(p')}} $ is 
a finitely generated torsion $\La (\Gamma^{(p)} )$-module by 
Lemma $\ref{claim:extension}$. 
\par 
To finish the proof of Lemma $\ref{lem3-section2}$, we prove 
Claim $\ref{claim:final}$. 
Since $W_m (\oline{\mbb{F}}_q)$ is a successive extension of 
$W_1 (\oline{\mbb{F}}_q)=\oline{\mbb{F}}_q$, it is enough to 
show the first assertion only 
for $M=W_1 (\oline{\mbb{F}}_q)=\oline{\mbb{F}}_q$. 
Let $U$ be an open subgroup of $\Gamma$, $(\oline{\mbb{F}}_q)^{U} $ 
is a finite extension of $\mathbb{F}_q$. We have 
a natural $\La (\Gamma )$-linear isomorphism 
$(\oline{\mbb{F}}_q)^{U} \cong \mbb{F}_q [(\Gamma /U )^{\vee}]$. 
If $\Gamma /U$ is sufficiently small so that the order of $\Gamma /U $ 
divides $q-1$, we can take a basis $\{x_1 ,\cdots ,x_l \}$ of 
$(\oline{\mbb{F}}_q)^{U}$ over 
$\mbb{F}_q$, so that the action of $\Gamma$ on $(\oline{\mbb{F}}_q)^{U}$ 
is represented by a diagonal matrix. 
For each member $x_i$ in the above set of basis, 
$g\mapsto (x_i )^g /x_i $ gives a character of $\Gamma$. 
Every character of $\Gamma $ factored by $\Gamma /U$ 
is given this way and for different $x_i ,x_j$ the associated 
characters are different. This explains the canonical isomorphism 
$(\oline{\mbb{F}}_q)^{U} \cong \mbb{F}_q [(\Gamma /U )^{\vee}]$ 
for these special $U$'s. 
Since they are rather elementary, we do not give 
further explanation nor the proof on the above isomorphisms. 
By taking an inductive limit of 
$(\oline{\mbb{F}}_q)^{U} \cong \mbb{F}_q [(\Gamma /U )^{\vee}]$ with 
respect to open subgroups $U$ of $\Gamma$, we have: 
\begin{equation}\label{equation:decomposition} 
M \cong \mbb{F}_q [ (\Gamma )^{\vee}] \cong 
(\mbb{F}_p [ (\Gamma )^{\vee}])^{\oplus \mrm{ord}_p (q)}.  
\end{equation}  
\par 
Hence, it suffices to show the first assertion only 
for a $\La (\Gamma )$-linear quotient $N$ of 
$\mbb{F}_p [ (\Gamma )^{\vee}]$. 
Taking Pontrjagin dual, $N^{\vee}$ is a $\La (\Gamma )$-submodule of 
$(\mbb{F}_p [ (\Gamma )^{\vee}])^{\vee} \cong \mbb{F}_p [[ \Gamma ]] 
\cong \La (\Gamma )/(p)$. Hence we have $N^{\vee} =I$ 
for an ideal $I$ of $\mbb{F}_p [[ \Gamma ]]$. 
Since $\Gamma^{(p')}$is procyclic, the short exact sequence: 
$$
0 \lra I \lra \mbb{F}_p [[ \Gamma ]] \lra \mbb{F}_p [[ \Gamma ]] /I 
\lra 0 
$$
induces the following sequence: 
\begin{equation}\label{equation:lastclaim}
(\mbb{F}_p [[ \Gamma ]] /I )^{\Gamma^{(p')}} 
\lra (N^{\vee})_{\Gamma^{(p')}} \lra 
(\mbb{F}_p [[ \Gamma ]])_{\Gamma^{(p')}} \lra 
(\mbb{F}_p [[ \Gamma ]] /I )_{\Gamma^{(p')}}
\lra 0 . 
\end{equation}
The third term $(\mbb{F}_p [[ \Gamma ]])_{\Gamma^{(p')}}$ is isomorphic to 
$\mbb{F}_p [[ \Gamma^{(p)} ]] \cong \La (\Gamma^{(p)})/(p)$. 
For the first term, we decompose as $\mbb{F}_p [[ \Gamma ]] /I 
\cong \mbb{F}_p [[ \Gamma^{(p)} ]][[\Gamma^{(p')}]] /I 
\cong \left( \left( 
(\mbb{F}_p [[ \Gamma^{(p)} ]]/I_0 )\right) 
[[\Gamma^{(p')}]] \right) /\oline{I} $ 
where $I_0 =I \cap \mbb{F}_p [[ \Gamma^{(p)} ]]$ and $\oline{I}$ is 
the image of $I$ via $\mbb{F}_p [[ \Gamma ]] 
\cong \mbb{F}_p [[ \Gamma^{(p)} ]][[\Gamma^{(p')}]] 
\twoheadrightarrow 
\left( \mbb{F}_p [[ \Gamma^{(p)} ]] /I_0 \right) [[\Gamma^{(p')}]]$. 
Now, it is easy to see that 
$(R [[ \Gamma^{(p')} ]] /\oline{I})^{\Gamma^{(p')}} = 
R$ when $R$ is a pro-$p$ algebra invariant under $\Gamma^{(p')}$ 
and $\oline{I}$ is the ideal of $R [[\Gamma^{(p')}]]$ 
such that $R \cap \oline{I} =0$. Hence, the first term of 
$(\ref{equation:lastclaim})$ 
is isomorphic to $\mbb{F}_p [[ \Gamma^{(p)} ]]/I_0$, which is a finitely 
generated torsion $\La (\Gamma^{(p)})$-module. 
Since $\La (\Gamma^{(p)} )$ is noetherian, the assertion $(\mrm{a})$ 
is an immediate consequence of the sequence $(\ref{equation:lastclaim})$. 
\par 
For the assertion $(\mrm{b})$, by the same argument as above, 
it is sufficient to prove when $N'$ is a $\La (\Gamma )$-linear 
submodule of $\mbb{F}_p [ (\Gamma )^{\vee}]$. Then we have 
a $\La (\Gamma^{(p)})$-linear map 
$\La (\Gamma^{(p)} )/(p) \twoheadrightarrow ({N'}^{\vee})_{\Gamma^{(p')}}$. 
This completes the proof of $(\mrm{b})$. 
\par 
By the similar argument as above, we have $W(\oline{\mbb{F}}_q) 
\cong 
\left( \varprojlim_n \left( \Z /(p^n) [(\Gamma )^{\vee}] \right) 
\right)^{\oplus \mrm{ord}_p (q)}$. It is not difficult to show 
that only non-trivial 
$\La (\Gamma)$-linear quotients of 
$\varprojlim_n \left( \Z /(p^n) [(\Gamma )^{\vee}] \right)$ is either 
$\varprojlim_n \left( \Z /(p^n) [(\Gamma )^{\vee}] \right)$ itself 
or $ \Z /(p^t) [(\Gamma )^{\vee}] $ for some $t$. 
This suffices for the second assertion. 
This completes the proof of Claim $\ref{claim:final}$ (and hence completes 
the proof of Lemma $\ref{lem3-section2}$). 
\end{proof}
Before starting the proof of Theorem $\ref{thm:cyc}$, we recall 
the following result: 
\begin{prop}\label{prop-section2} 
\begin{enumerate}
\item Let $A/F$ an abelian variety satisfying the condition ${\bf (SS)}$.
Let $D(\mc{A})$ be the Dieudonn\'{e} crystal over $C_{\finf^{(p)}}^{\sharp}/W$ 
associated to the N\'{e}ron model $\mc{A}$ of $A\times_{F}\finf^{(p)} $. 
Then we have a monomorphism of $\La (\Gamma^{(p)})$-modules 
$$X(A/\finf^{(p)} )\lra X(D(\mc{A})/\finf^{(p)} ).$$
\item Let ${\cal G}$ be a p-divisible group over $C_{\finf^{(p)}}$, 
then $X(D({\cal G})/\finf^{(p)})$ 
is isomorphic to the Pontrjagin dual of 
$H^1_{\mrm{fl}}(C_{\finf^{(p)}},{\cal G})$. 
\end{enumerate}
\end{prop}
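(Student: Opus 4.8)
Write $C:=C_{\finf^{(p)}}$. In both cases the strategy is to compare $H^1_{\mrm{syn}}(C,\mc{S}_{D}\otimes\Q_p/\Z_p)$ — whose Pontrjagin dual is $X(D/\finf^{(p)})$ by definition — with flat cohomology over $C$, and then, in case $(1)$, with the arithmetic Selmer group; all the comparison maps will be $\Gamma$-equivariant by the base-change compatibilities recalled before Proposition $\ref{theorem-section2}$ (hence $\La(\Gamma^{(p)})$-linear after the relevant coinvariants), and they are, in substance, the comparisons established in \cite{KT}, adapted to the present base.

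\emph{Part $(2)$.} Here $Z=\varnothing$, $D(\mc{G})$ is a classical Dieudonn\'{e} crystal with Hodge filtration $D^0(\mc{G})$, and $\mc{S}_{D(\mc{G})}$ is the mapping fibre of $1-\varphi:Ru_*D^0(\mc{G})\to Ru_*D(\mc{G})$. The key input is the fundamental exact sequence of Berthelot--Breen--Messing type (cf. \cite{bbm}, and in the form used here \cite{KT}): on the syntomic site of $C$ there is, for every $n\ge1$, a short exact sequence
\[
0\lra \mc{G}[p^n]\lra \mathcal{D}^0_n\overset{1-\varphi}{\lra}\mathcal{D}_n\lra 0,
\]
where $\mathcal{D}^0_n$ and $\mathcal{D}_n$ are syntomic sheaves whose cohomology over $C$ recovers the mod-$p^n$ reductions of $H^*_{\mrm{crys}}(C/W,D^0(\mc{G}))$ and $H^*_{\mrm{crys}}(C/W,D(\mc{G}))$. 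This exhibits $\mc{G}[p^n]$ as the mapping fibre of $1-\varphi$ between $\mathcal{D}^0_n$ and $\mathcal{D}_n$; applying $R\Gamma(C,-)$ and comparing with crystalline cohomology identifies $R\Gamma(C,\mc{S}_{D(\mc{G})})\otimes^{\mbb{L}}\Z/p^n$ with $R\Gamma_{\mrm{fl}}(C,\mc{G}[p^n])$. Passing to the inductive limit over $n$, using $\mc{G}=\varinjlim_n\mc{G}[p^n]$, gives $R\Gamma(C,\mc{S}_{D(\mc{G})}\otimes\Q_p/\Z_p)\simeq R\Gamma_{\mrm{fl}}(C,\mc{G})$, hence an isomorphism $H^1_{\mrm{syn}}(C,\mc{S}_{D(\mc{G})}\otimes\Q_p/\Z_p)\cong H^1_{\mrm{fl}}(C,\mc{G})$; dualising gives $(2)$. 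The only new point relative to \cite{KT} is that the argument is insensitive to replacing $\F_q$ by the perfect base field $\F^{(p)}_q$ (or $\oline{\F}_q$), which is immediate since crystalline Dieudonn\'{e} theory and the fundamental exact sequence hold over any perfect field of characteristic $p$; this also yields the $\La(\Gamma)$-equivariance.

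\emph{Part $(1)$.} By exactness of Pontrjagin duality, the asserted monomorphism $X(A/\finf^{(p)})\hookrightarrow X(D(\mc{A})/\finf^{(p)})$ is the same as a canonical surjective $\La(\Gamma^{(p)})$-linear map
\[
H^1_{\mrm{syn}}\bigl(C,\mc{S}_{D(\mc{A})}\otimes\Q_p/\Z_p\bigr)\twoheadrightarrow\sel(A/\finf^{(p)}).
\]
I would build it as follows. Over the good-reduction locus $U=C\setminus Z$ the crystal $D(\mc{A})|_U$ is that of the $p$-divisible group $\mc{A}\{p\}|_U$, so Part $(2)$ (over $U$) identifies $\mc{S}_{D(\mc{A})}\otimes\Q_p/\Z_p$ with $\mc{A}\{p\}$ there; the role of the twist $D(\mc{A})(-Z)$, $D^0(\mc{A})(-Z)$ built into $\mc{S}_{D(\mc{A})}$ is precisely to extend this across $Z$ into a comparison of $H^1_{\mrm{syn}}(C,\mc{S}_{D(\mc{A})}\otimes\Q_p/\Z_p)$ with the subgroup of $H^1_{\mrm{fl}}(\finf^{(p)},A\{p\})$ consisting of the classes that, at every place $v$, lie in the local Selmer condition $\mrm{Ker}\bigl[H^1_{\mrm{fl}}(\finf^{(p)}_v,A\{p\})\to H^1_{\mrm{fl}}(\finf^{(p)}_v,A)\bigr]$, i.e. with $\sel(A/\finf^{(p)})$ up to a finite discrepancy at $Z$. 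This is proved by gluing along the localisation triangle relating cohomology over $C$, over $U$, and the local cohomologies at $Z$, together with the local computation that, $\mc{A}$ being semi-stable, $\mc{A}(\mc{O}_v)=A(\finf^{(p)}_v)$ (N\'{e}ron mapping property), so that $A(\finf^{(p)}_v)/p^n$ is contained in the image of the local integral cohomology $H^1_{\mrm{fl}}(\mc{O}_v,\mc{A}\{p\})$, the difference being governed by $H^1$ of the finite component group $\Phi_v$. Tracking $\Gamma$-equivariance and dualising yields the monomorphism; note the good-reduction case ($Z=\varnothing$) already recovers the isomorphism $X(A/\finf^{(p)})\cong H^1_{\mrm{fl}}(C,\mc{A}\{p\})^{\vee}$.

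\emph{Main obstacle.} I expect the technical heart to be the $(-Z)$-twisted comparison in Part $(1)$: proving that the twisted crystal $D(\mc{A})(-Z)$ of \cite[\S 5.11]{KT}, and hence $\mc{S}_{D(\mc{A})}$, really computes the Selmer group over $\finf^{(p)}$ — not merely the full flat cohomology $H^1_{\mrm{fl}}(C,\mc{A}\{p\})$, which is strictly larger at $Z$ — and controlling the exact interaction between this twist, the connected N\'{e}ron model $\mc{A}^0$ and the $p$-part of the component groups $\Phi_v$. Precisely this interaction creates a finite discrepancy, which is why the statement is a monomorphism and not an isomorphism; here semi-stable reduction \textbf{(SS)} is used essentially, keeping $\Phi_v$ a manageable finite group. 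A secondary issue is purely technical: in characteristic $p$ the sheaf $\mc{A}[p^n]$ is only quasi-finite flat over $C$, so ``flat cohomology over $C$'' must be handled via $U\hookrightarrow C$ and direct images, and the $\sigma$-linear Frobenius $\varphi$ together with the various $W$-module structures must be carried along throughout to secure the asserted $\La(\Gamma^{(p)})$-linearity.
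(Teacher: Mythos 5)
The paper's own proof of this proposition consists entirely of citing \cite[\S 2.5, \S 5.13]{KT} for part (1) and \cite[\S 5.10]{KT} for part (2); your sketch reconstructs in outline exactly the Kato--Trihan arguments being cited, with the correct dualisation of the monomorphism into a surjection $H^1_{\mrm{syn}}(C,\mc{S}_{D(\mc{A})}\otimes\Q_p/\Z_p)\twoheadrightarrow\sel(A/\finf^{(p)})$, the correct BBM-style syntomic description of $\mc{G}[p^n]$ for part (2), and the correct identification of the $(-Z)$-twist and the component groups $\Phi_v$ as the technical crux of part (1). This is essentially the same approach as the paper.
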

\begin{pf} The first assertion is proved as in \cite[\S 2.5, 5.13]{KT}. 
The second assertion is reduced to \cite[\S 5.10]{KT}.
\end{pf}
\begin{proof}[Proof of Theorem $\ref{thm:cyc}$]
By Lemma $\ref{lem:ssred}$, we can assume that $A/F$ has 
semi-stable reduction. 
Note that $\mrm{Gal}(\finf/\fpinf)$ is isomorphic to 
$\Gamma^{(p')}\cong \uset{l\not= p}{\prod}{\Z_l}$. 
Let $\mbb{F}^{(p)}_q = F^{(p)}_{\infty} \cap \oline{\mbb{F}}_q$. 
We have the following commutative diagram: 
\small 
$$
\begin{CD}
0 @>>> \mrm{Ker}(1-\varphi^{(p)}_1) @>>> 
H^1_{\mrm{crys}}(C_{\finf^{(p)}}^{\sharp}/W(\mbb{F}^{(p)}_q) ) 
@>{1-\varphi^{(p)}_1 }>> 
H^1_{\mrm{crys}}(C_{\finf^{(p)}}^{\sharp}/W(\mbb{F}^{(p)}_q) )' 
\\ 
@. @V{a}VV @V{b}VV @VV{c}V \\
0 @>>> \mrm{Ker}(1-\varphi_1)^{\Gamma^{(p')}} @>>> 
H^1_{\mrm{crys}}(C_{\finf}^{\sharp} /W )
^{\Gamma^{(p')}} 
@>>{1-\varphi_1 }> 
{H^1_{\mrm{crys}}(C_{\finf}^{\sharp} /W)'}
^{\Gamma^{(p')}},  \\ 
\end{CD}
$$
\normalsize 
where $H^1_{\mrm{crys}}(C_{\finf^{(p)}}/W(\mbb{F}^{(p)}_q) )$ (resp. 
$H^1_{\mrm{crys}}(C_{\finf^{(p)}}/W(\mbb{F}^{(p)}_q)  )'$) means 
$H^1_{\mrm{crys}}(C_{\finf^{(p)}}^{\sharp} /W(\mbb{F}^{(p)}_q ),D^0 (-Z) 
\otimes {\Q_p/\Z_p})$ 
(resp. $H^1_{\mrm{crys}}(C_{\finf^{(p)}}^{\sharp} /W(\mbb{F}^{(p)}_q )
,D (-Z) \otimes {\Q_p/\Z_p} ) $) and we take the similar definitions 
for $\finf$. 
We denote by $\varphi^{(p)}_i$ the frobenius operator 
over $F^{(p)}_{\infty}$. 
The vertical maps $b$ and $c$ in the diagram are isomorphism 
by crystalline base change theorem. 
Thus, the map $a$ is isomorphism. 
By the triangle $(\ref{equation:syn})$, 
we also have the following diagram: 
\small 
$$
\begin{CD}
0 @>>> \mrm{Coker}(1-\varphi^{(p)}_0) @>>> 
H^1_{\mrm{syn}} (C_{\finf^{(p)}},\mc{S}_{D} 
\otimes {\Q_p/\Z_p})
@>>> 
\mrm{Ker}(1-\varphi^{(p)}_1) @>>> 0
\\ 
@. @V{d}VV @VVV @VV{a}V @.\\
0 @>>> \mrm{Coker}(1-\varphi_0)^{\Gamma^{(p')}} @>>> 
H^1_{\mrm{syn}} (C_{\finf}, \mc{S}_{D} 
\otimes {\Q_p/\Z_p})^{\Gamma^{(p')}}
@>>> 
\mrm{Ker}(1-\varphi_1)^{\Gamma^{(p')}} @.   \\ 
\end{CD}
$$
\normalsize
We have the following claim: 
\begin{claim}\label{claim:aaa}
The Pontrjagin dual of the cokernel of the map: 
$$1 - \varphi^{(p)}_0 : 
H^0_{\mrm{crys}}(C_{\finf^{(p)}}^{\sharp} /W(\mbb{F}^{(p)}_q ),
D^0 (-Z) \otimes \Q_p /\Z_p)  
\lra H^0_{\mrm{crys}}(C_{\finf^{(p)}}^{\sharp} /W(\mbb{F}^{(p)}_q ),D(-Z) 
\otimes \Q_p /\Z_p)$$ is a finitely generated 
torsion $\La (\Gamma^{(p)})$-module. 
\end{claim}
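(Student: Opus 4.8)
The plan is to pin down $\mrm{Coker}(1-\varphi^{(p)}_0)$ explicitly — I will show it is, up to isomorphism, a $\La(\Gamma^{(p)})$-linear quotient of the $p$-power torsion of $H^1_{\mrm{crys}}$ — and then invoke the $\Gamma^{(p)}$-analogue of the module computation carried out in the proof of Claim $\ref{claim:final}$. First I would record that, since $C_{\finf^{(p)}}$ is proper over the perfect field $\mbb{F}^{(p)}_q$ and the coefficients are finite locally free, $H^i_{\mrm{crys}}(C_{\finf^{(p)}}^{\sharp}/W(\mbb{F}^{(p)}_q),D^0(-Z))$ and $H^i_{\mrm{crys}}(C_{\finf^{(p)}}^{\sharp}/W(\mbb{F}^{(p)}_q),D(-Z))$ are finitely generated $W(\mbb{F}^{(p)}_q)$-modules (Tsuji's finiteness, exactly as in Lemma $\ref{lem1-section2}$; equivalently, by crystalline base change from the corresponding finitely generated $W(\mbb{F}_q)$-modules over $C_F^{\sharp}$). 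Each is therefore of the form $W(\mbb{F}^{(p)}_q)^{\oplus r}\oplus\bigoplus_j W_{n_j}(\mbb{F}^{(p)}_q)$, with $p$-power torsion submodule $\bigoplus_j W_{n_j}(\mbb{F}^{(p)}_q)$. Moreover, from the triangle $Ru_{\ast}D^{\bullet}(-Z)\to Ru_{\ast}D^{\bullet}(-Z)\otimes\Q_p\to Ru_{\ast}D^{\bullet}(-Z)\otimes^{\mbb{L}}\Q_p/\Z_p$ one obtains, for $D^{\bullet}\in\{D^0,D\}$, a $(1-\varphi^{(p)})$-equivariant short exact sequence
$$0\to H^0_{\mrm{crys}}(D^{\bullet}(-Z))\otimes\Q_p/\Z_p\to H^0_{\mrm{crys}}(D^{\bullet}(-Z)\otimes\Q_p/\Z_p)\to H^1_{\mrm{crys}}(D^{\bullet}(-Z))(p)\to 0,$$
where here and below $H^i_{\mrm{crys}}(-)$ is shorthand for $H^i_{\mrm{crys}}(C_{\finf^{(p)}}^{\sharp}/W(\mbb{F}^{(p)}_q),-)$.

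Next I would apply the snake lemma to $1-\varphi^{(p)}$ between the two sequences (for $D^0$ and for $D$). This exhibits $\mrm{Coker}(1-\varphi^{(p)}_0)$ — the cokernel on the middle terms — as an extension of $\mrm{Coker}\bigl(1-\varphi^{(p)}_1:H^1_{\mrm{crys}}(D^0(-Z))(p)\to H^1_{\mrm{crys}}(D(-Z))(p)\bigr)$ by a quotient of $\mrm{Coker}\bigl(1-\varphi^{(p)}_0:H^0_{\mrm{crys}}(D^0(-Z))\otimes\Q_p/\Z_p\to H^0_{\mrm{crys}}(D(-Z))\otimes\Q_p/\Z_p\bigr)$. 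The second cokernel is $0$: by Lemma $\ref{lem1-section2}$(1) the canonical map ${\bf 1}$ identifies $H^0_{\mrm{crys}}(D^0(-Z))\otimes\Q_p$ with $H^0_{\mrm{crys}}(D(-Z))\otimes\Q_p$, a finite-dimensional $\mrm{Frac}(W(\mbb{F}^{(p)}_q))$-vector space, and through this identification $1-\varphi^{(p)}_0$ becomes ${\bf 1}\circ(1-\psi)$ with $\psi$ $\sigma$-semilinear; by Sublemma $\ref{sblem-section2}$ — whose proof applies over $W(k)$ for any perfect $k$ whose absolute Galois group has trivial pro-$p$ part, in particular for $k=\mbb{F}^{(p)}_q$ since $\Gal(\oline{\F}_q/\mbb{F}^{(p)}_q)=\Gamma^{(p')}$ — this map is surjective, and a short diagram chase against the surjections $H^0_{\mrm{crys}}(D^{\bullet}(-Z))\otimes\Q_p\twoheadrightarrow H^0_{\mrm{crys}}(D^{\bullet}(-Z))\otimes\Q_p/\Z_p$ then forces $1-\varphi^{(p)}_0$ to be surjective on $H^0_{\mrm{crys}}(\cdot)\otimes\Q_p/\Z_p$ as well. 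Hence $\mrm{Coker}(1-\varphi^{(p)}_0)$ is a $\La(\Gamma^{(p)})$-linear quotient of $H^1_{\mrm{crys}}(D(-Z))(p)\cong\bigoplus_j W_{m_j}(\mbb{F}^{(p)}_q)$.

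It then remains to dualise. The Pontrjagin dual $\mrm{Coker}(1-\varphi^{(p)}_0)^{\vee}$ is a $\La(\Gamma^{(p)})$-submodule of $\bigl(\bigoplus_j W_{m_j}(\mbb{F}^{(p)}_q)\bigr)^{\vee}=\bigoplus_j W_{m_j}(\mbb{F}^{(p)}_q)^{\vee}$. By an argument analogous to the one in the proof of Claim $\ref{claim:final}$ with $\Gamma^{(p)}$ in place of $\Gamma$ — using the normal basis theorem for $\mbb{F}_{q^{p^n}}/\mbb{F}_q$ and passing to the inductive limit over $n$ — one has a $\La(\Gamma^{(p)})$-linear isomorphism $(\mbb{F}^{(p)}_q)^{\vee}\cong\mbb{F}_q[[\Gamma^{(p)}]]\cong(\La(\Gamma^{(p)})/(p))^{\oplus\mrm{ord}_p(q)}$, a finitely generated torsion $\La(\Gamma^{(p)})$-module; dévissage along the Witt-length filtration together with Lemma $\ref{claim:extension}$ shows each $W_{m_j}(\mbb{F}^{(p)}_q)^{\vee}$ is finitely generated and torsion over $\La(\Gamma^{(p)})$, hence so is the finite direct sum, and hence — by the Noetherianity of $\La(\Gamma^{(p)})$ — so is the submodule $\mrm{Coker}(1-\varphi^{(p)}_0)^{\vee}$. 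This proves Claim $\ref{claim:aaa}$.

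The delicate point is the vanishing, in the second step, of the cokernel of $1-\varphi^{(p)}_0$ on the divisible part $H^0_{\mrm{crys}}(\cdot)\otimes\Q_p/\Z_p$: if this cokernel were non-trivial, its Pontrjagin dual — a quotient of the dual of $W(\mbb{F}^{(p)}_q)\otimes\Q_p/\Z_p$, which is finitely generated but \emph{not} torsion over $\La(\Gamma^{(p)})$ — could fail to be torsion. So the whole argument rests on the surjectivity assertion of Sublemma $\ref{sblem-section2}$ being available over $W(\mbb{F}^{(p)}_q)$, i.e. on an ``additive Lang'' statement for finite $p$-groups over $\mbb{F}^{(p)}_q$, which amounts to the vanishing of $H^1(\Gamma^{(p')},M)$ for finite $p$-groups $M$ (true since $\Gamma^{(p')}=\prod_{\ell\neq p}\Z_\ell$ has no quotient of order $p$).
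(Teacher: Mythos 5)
Your argument is correct and follows essentially the same line as the paper's: both compare the four modules $H^0_{\mrm{crys}}(\Z_p)$, $H^0_{\mrm{crys}}(\Q_p)$, $H^0_{\mrm{crys}}(\Q_p/\Z_p)$ and $H^1_{\mrm{crys}}(\Z_p)_{\mrm{tor}}$ via the snake lemma, prove that $1-\varphi^{(p)}_0$ is surjective at the $\Q_p$-level, and conclude that the cokernel on the $\Q_p/\Z_p$-level is a quotient of the torsion part of $H^1_{\mrm{crys}}$, hence of $\bigoplus_j W_{m_j}(\mbb{F}^{(p)}_q)$. The only place you genuinely diverge is in justifying the surjectivity of $1-\varphi^{(p)}_0$ on $H^0_{\mrm{crys}}(\cdot)\otimes\Q_p$: the paper quotes \cite[Lemma 6.2]{EL}, whereas you give a Galois-cohomological (``additive Lang'') argument that Sublemma~$\ref{sblem-section2}$ remains valid over $W(\mbb{F}^{(p)}_q)$ because $\Gamma^{(p')}=\Gal(\oline{\F}_q/\mbb{F}^{(p)}_q)$ has no pro-$p$ quotient, so $H^1(\Gamma^{(p')},-)$ vanishes on finite $p$-groups; this is correct and is a nice self-contained replacement for the citation. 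You also spell out, via the $\Gamma^{(p)}$-analogue of Claim~$\ref{claim:final}$ (normal basis giving $(\mbb{F}^{(p)}_q)^{\vee}\cong(\La(\Gamma^{(p)})/(p))^{\oplus\mrm{ord}_p q}$, dévissage along Witt length, Noetherianity), the final reduction from ``quotient of a finitely generated torsion $W(\mbb{F}^{(p)}_q)$-module'' to ``Pontrjagin dual is finitely generated torsion over $\La(\Gamma^{(p)})$,'' which the paper states but leaves implicit. In short: same proof, with two steps usefully filled in.
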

By this claim, the Pontrjagin dual of $\mrm{Coker}(1 - \varphi^{(p)}_0)$ 
is a finitely generated torsion 
$\La (\Gamma^{(p)} )$-module. 
Taking the Pontrjagin dual of the above commutative diagram, 
we have a $\La (\Gamma^{(p)})$-linear map 
$X(D(\mc{A})/\finf)_{\Gamma^{(p')}} \lra X(D(\mc{A})/\fpinf)$ 
whose cokernel is a torsion $\La (\Gamma^{(p)} )$-module 
of finite type. 
On the other hand, 
$X(D(\mc{A})/\finf)_{\Gamma^{(p')}}$ is a finitely generated 
torsion $\La (\Gamma^{(p)})$-module by 
Proposition $\ref{theorem-section2}$. 
Finally we will prove Claim $\ref{claim:aaa}$. 
This will complete the proof of Theorem $\ref{thm:cyc}$ by 
Proposition $\ref{prop-section2}$. 
We consider the following commutative diagram whose vertical maps are 
all $1-\varphi^{(p)}_i$: 
\small 
\begin{equation}\label{equation:crystor1}
\begin{CD}
H^0_{\mrm{crys}}( \Z_p  ) @>>> H^0_{\mrm{crys}}( \Q_p  ) @>>> 
H^0_{\mrm{crys}}( \Q_p /\Z_p  ) 
@>>> 
H^1_{\mrm{crys}}( \Z_p )_{\mrm{tor}} @>>> 0 
\\ 
@VVV @VVV @VV{1-\varphi^{(p)}_0}V @VVV @. \\
H^0_{\mrm{crys}}( \Z_p  )' @>>> H^0_{\mrm{crys}}( \Q_p  )' @>>> 
H^0_{\mrm{crys}}( \Q_p /\Z_p  )' 
@>>> 
H^1_{\mrm{crys}}( \Z_p )'_{\mrm{tor}} @>>> 0 ,  \\ 
\end{CD}
\end{equation}
\normalsize 
where $H^i_{\mrm{crys}} (\ast )$ 
in the upper line with $\ast =\Z_p ,\Q_p ,\Q_p /\Z_p$ 
means $H^i_{\mrm{crys}}(C_{\finf^{(p)}}^{\sharp} /W(\mbb{F}^{(p)}_q ),
D^0(-Z) \otimes \ast )$, $H^i_{\mrm{crys}} (\ast )'$ in the lower 
line means $H^i_{\mrm{crys}}(C_{\finf^{(p)}}^{\sharp} /W(\mbb{F}^{(p)}_q ),
D(-Z) \otimes \ast )$ and $(\ \ )_{\mrm{tor}}$ is the $\Z_p$-torsion part. 
We can show as in Lemma $\ref{lem1-section2}$ (a) that the map 
$1: H^0_{\mrm{crys}}( \Q_p  ) \lra H^0_{\mrm{crys}}( \Q_p  ) '$ induces 
a natural isomorphism 
of finite dimensional $W(\mbb{F}^{(p)}_q ) \otimes \Q_p$-vector spaces. 
Since $\varphi^{(p)}_0$ is $\sigma$-linear, 
$1-\varphi^{(p)}_0: H^0_{\mrm{crys}}( \Q_p  ) \lra H^0_{\mrm{crys}}( \Q_p  )'$ 
is surjective (see \cite[Lemma 6.2]{EL} for a similar argument). 
By the diagram $(\ref{equation:crystor1})$ and 
by the snake lemma, $\mrm{Coker}[1-\varphi^{(p)}_0 : 
H^0_{\mrm{crys}}( \Q_p /\Z_p  ) \rightarrow H^0_{\mrm{crys}}( \Q_p /\Z_p  )']$ 
is isomorphic to $\mrm{Coker}[1-\varphi^{(p)}_1 : 
H^1_{\mrm{crys}}(\Z_p  )_{\mrm{tor}} \rightarrow 
H^1_{\mrm{crys}}( \Z_p  )'_{\mrm{tor}}]$. 
Thus, $\mrm{Coker}[1-\varphi^{(p)}_0 : 
H^0_{\mrm{crys}}( \Q_p /\Z_p  ) \rightarrow H^0_{\mrm{crys}}
( \Q_p /\Z_p  )']$ is a $\La (\Gamma^{(p)})$-linear quotient of 
a finitely generated torsion $W(\mbb{F}^{(p)}_q )$-module 
$H^1_{\mrm{crys}}( \Z_p  )'_{\mrm{tor}}$. 
This completes the proof of Claim $\ref{claim:aaa}$.
\end{proof}
\section{$\mu$-invariants of $X (A /\fpinf)$} 
We showed that $X (A /\fpinf)$ is a torsion 
$\La (\Gamma^{(p)})$-module for an abelian variety $A$ over $F$. 
In this section, we will prove that the $\mu$-invariant of 
$X (A /\fpinf)$ is zero under certain assumptions 
(Theorem $\ref{thm:cycmu}$). 
\par 
Before giving the proof, we prepare several fundamental facts 
which will be necessary in the proof. 
\begin{lem}\label{lem:goodred}
Let $K$ be a finitely generated one-variable function field 
over a field $\mbb{F}$ contained in $\oline{\mbb{F}}_q$ 
and let $A$ be an abelian variety over $K$. 
Then, the Selmer group $\sel (A/K)$ is isomorphic 
to the flat cohomology $H^1_{\mrm{fl}}(C_K, \mc{A}\{ p\})$ if 
$A$ has everywhere good reduction over $K$, where $\mc{A}$ is the 
model of $A$ over $C_K$. 
Here, $C_K$ denotes the projective smooth curve which is 
the model of a function field $K$.
\end{lem}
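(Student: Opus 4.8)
The plan is to unwind the definition of the Selmer group in \eqref{equation:seldef} and compare it, term by term, with the flat cohomology of the Néron model $\mc{A}$ over $C_K$, using the Néron mapping property to control the local conditions. First I would write $A\{p\}$ for the $p$-divisible group of $A$ and note that, since $A$ has everywhere good reduction, the Néron model $\mc{A}$ is an abelian scheme over $C_K$, so $\mc{A}\{p\}$ is a $p$-divisible group over $C_K$ in the sense of a finite flat group scheme system; in particular $\mc{A}\{p\}$ is the (unique) extension of $A\{p\}$ over the generic point to a $p$-divisible group over $C_K$. The localization sequence in flat cohomology relating $C_K$ and its generic point $\Spec K$ then reads
\[
0\lra H^1_{\mrm{fl}}(C_K,\mc{A}\{p\})\lra \Hfl(K,A\{p\})\lra \bigoplus_{v}H^1_{v}(C_K,\mc{A}\{p\}),
\]
where $v$ runs over the closed points of $C_K$ and $H^1_v$ denotes cohomology with supports; so the task is to identify, at each $v$, the image of the local class with the local Selmer condition $\Hfl(K_v,A)$.

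The key local computation is that, for a place $v$ of good reduction, the local condition $\mrm{Ker}[\Hfl(K_v,A\{p\})\to \Hfl(K_v,A)]$ coincides with the unramified part, i.e. with the image of $H^1_{\mrm{fl}}(\mc{O}_v,\mc{A}\{p\})\to \Hfl(K_v,A\{p\})$, equivalently with the kernel of the map to $H^1_v$ in the sequence above. To see this I would use the Kummer sequence $0\to \mc{A}\{p\}\to \mc{A}\otimes\Q_p/\Z_p$ obtained from multiplication by $p^n$ on the abelian scheme $\mc{A}$ over $\mc{O}_v$, together with the fact that $\mc{A}$ has connected fibers and $A(K_v)\otimes\Q_p/\Z_p$ injects appropriately; the properness and smoothness of $\mc{A}/\mc{O}_v$ give that $H^1_{\mrm{fl}}(\mc{O}_v,\mc{A})$ is the ``unramified'' piece of $\Hfl(K_v,A)$ (here one uses that $H^1$ of a henselian local ring with coefficients in a smooth proper group scheme is controlled by the special fiber, which is an abelian variety over a finite field, hence its $H^1$ with $\Q_p/\Z_p$-divisible coefficients is trivial after the relevant reductions). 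Concretely: a class in $\Hfl(K_v,A\{p\})$ dies in $\Hfl(K_v,A)$ exactly when it comes from $A(K_v)\otimes\Q_p/\Z_p$, and by Néron's mapping property every point of $A(K_v)$ extends over $\mc{O}_v$, so such a class extends over $\mc{O}_v$ and therefore lies in the kernel of the restriction to $H^1_v$; the converse inclusion is the same argument read backwards.

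Granting the local identification, one concludes: the global class in $\Hfl(K,A\{p\})$ satisfies the local Selmer condition at every $v$ iff it lies in the kernel of the map to $\bigoplus_v H^1_v(C_K,\mc{A}\{p\})$, iff (by the localization sequence) it comes from $H^1_{\mrm{fl}}(C_K,\mc{A}\{p\})$; and the map $H^1_{\mrm{fl}}(C_K,\mc{A}\{p\})\to \Hfl(K,A\{p\})$ is injective because $H^0_v(C_K,\mc{A}\{p\})=0$ for all $v$ (a finite flat group scheme has no sections supported at a point). This yields the desired isomorphism $\sel(A/K)\cong H^1_{\mrm{fl}}(C_K,\mc{A}\{p\})$. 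I expect the main obstacle to be the precise local statement that the Selmer condition $\Hfl(K_v,A)$ equals the unramified subgroup cut out by the Néron model — i.e. checking that $H^1_{\mrm{fl}}(\mc{O}_v,\mc{A})\hookrightarrow \Hfl(K_v,A)$ has image exactly the kernel of restriction to $H^1_v(\mc{O}_v,\mc{A}\{p\})$ — since this requires care with flat (rather than étale) cohomology of the $p$-divisible group in characteristic $p$, and is where one genuinely uses good reduction; everything else is formal manipulation of the localization sequence. This is standard (it is the function-field analogue of Mazur's computation of local conditions at primes of good reduction), and I would cite the treatment in \cite{KT} for the flat-cohomological details rather than reprove them.
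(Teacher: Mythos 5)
Your overall strategy matches the paper's: pass to the localization sequence for the inclusion $C_K \setminus Z \hookrightarrow C_K$ (you phrase it as passing to the generic point with cohomology with supports; the paper takes finite $Z$ and a limit, but these are the same thing), then reduce everything to a local statement at each closed point $v$. The paper reformulates the local terms via Milne as $\Hfl(K_v,\mc{F})/\Hfl(\mc{O}_v,\mc{F})$, which is what your $H^2_v$ (you wrote $H^1_v$ by slip — the degree shift in the localization sequence puts the supported cohomology in degree $2$) unwinds to; this is harmless.

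The real issue is your treatment of the local identification. You need
$$
\mathrm{Im}\bigl[\Hfl(\mc{O}_v,\mc{A}\{p\})\lra \Hfl(K_v,A\{p\})\bigr] \;=\; A(K_v)\otimes\Q_p/\Z_p \;=\; \Ker\bigl[\Hfl(K_v,A\{p\})\to\Hfl(K_v,A)\bigr].
$$
The inclusion $\supseteq$ is exactly the Néron mapping property argument you give, and that part is fine. But $\subseteq$ is \emph{not} ``the same argument read backwards.'' It is a different statement, and it needs a different input: from the Kummer sequence over $\mc{O}_v$,
$$
0\lra \mc{A}(\mc{O}_v)\otimes\Q_p/\Z_p \lra \Hfl(\mc{O}_v,\mc{A}\{p\}) \lra \Hfl(\mc{O}_v,\mc{A})(p)\lra 0,
$$
you must know that $\Hfl(\mc{O}_v,\mc{A})$ vanishes. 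That vanishing is $\Hfl(\mc{O}_v,\mc{A})\cong H^1_{\mrm{fl}}(\mbb{F}_v,\oline{\mc{A}}_v)=0$, the first isomorphism from henselian invariance for the smooth proper scheme $\mc{A}/\mc{O}_v$ and the second from Lang's theorem for the connected group $\oline{\mc{A}}_v$ over the residue field. Your parenthetical (``its $H^1$ with $\Q_p/\Z_p$-divisible coefficients is trivial after the relevant reductions'') gestures at the special fiber but neither names Lang nor states the needed vanishing; and the phrase that $\Hfl(\mc{O}_v,\mc{A})$ ``is the unramified piece of $\Hfl(K_v,A)$'' is a red herring --- the group is not merely a subgroup of the local cohomology, it is zero, and if it were nonzero your argument would not close. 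You should replace the hand-wave with an explicit invocation of Lang and the exact sequence above; with that supplied, the proof coincides with the paper's.

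One smaller point: the injectivity $H^1_{\mrm{fl}}(C_K,\mc{A}\{p\})\hookrightarrow \Hfl(K,A\{p\})$ comes from $H^1_Z(C_K,\mc{F})=0$ (for $\mc{F}$ finite flat over a regular curve), not from the vanishing of $H^0_v$; the latter is a degree off.
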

Let us recall the following claim: 
\begin{claim}\label{lem:localization} 
Let $V$ be a smooth curve over a finite field $\mathbb{F}_q$ 
and let $U \subset V$ be a dense open subset. 
For any finite flat group scheme $\mc{F}$ 
over $\mathcal{O}_V$, we have the following natural exact sequence:
$$
0 \lra \Hfl (V ,\mc{F}) \lra \Hfl (U,\mc{F} ) \lra \uset{v \in V \setminus U}
{\bigoplus} \dfrac{\Hfl (K_v ,F)}{\Hfl (\mathcal{O}_v ,\mc{F})},
$$
where $\mathcal{O}_v$ is the completion of $\mathcal{O}_V$ at 
the closed point $v\in V$, $K_v$ is the field of fraction of 
$\mathcal{O}_v$ and $F$ is the generic fiber of $F$. 
\end{claim}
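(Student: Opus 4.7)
The plan is to derive this as an instance of the standard localization sequence in flat cohomology, with the local obstruction at each missing point $v \in Z := V \setminus U$ made explicit.

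First, I would start from the long exact sequence of flat cohomology with supports,
$$
\cdots \lra H^1_Z(V,\mc{F}) \lra \Hfl(V,\mc{F}) \lra \Hfl(U,\mc{F}) \lra H^2_Z(V,\mc{F}) \lra \cdots,
$$
and use excision (together with passage to the completion at each closed point, which is legitimate for finite flat coefficients) to identify $H^i_Z(V,\mc{F}) \cong \bigoplus_{v\in Z} H^i_v(\mathcal{O}_v,\mc{F})$. Next, from the local-global sequence
$$
\cdots \lra \Hfl(\mathcal{O}_v,\mc{F}) \lra \Hfl(K_v,F) \lra H^2_v(\mathcal{O}_v,\mc{F}) \lra H^2_{\mrm{fl}}(\mathcal{O}_v,\mc{F}) \lra \cdots,
$$
combined with the vanishing $H^2_{\mrm{fl}}(\mathcal{O}_v,\mc{F}) = 0$ (which reflects that flat cohomology of a complete DVR with finite residue field $k_v$ reduces to Galois cohomology of $k_v$, and $\Gal(k_v^{\mrm{sep}}/k_v) \cong \widehat{\Z}$ has strict cohomological dimension one for finite modules), one obtains $H^2_v(\mathcal{O}_v,\mc{F}) \cong \Hfl(K_v,F)/\Hfl(\mathcal{O}_v,\mc{F})$; a functoriality check then identifies the boundary map $\Hfl(U,\mc{F}) \to H^2_v(\mathcal{O}_v,\mc{F})$ with the composition of restriction to $\Spec K_v$ and reduction modulo $\Hfl(\mathcal{O}_v,\mc{F})$.

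For the exactness at $\Hfl(V,\mc{F})$, i.e.\ injectivity of $\Hfl(V,\mc{F}) \to \Hfl(U,\mc{F})$, I would use the torsor interpretation of $H^1_{\mrm{fl}}$. Given an $\mc{F}$-torsor $T \to V$ which is trivial over $U$, choose a section $s\colon U \to T$. Since $T$ is finite flat over $V$, hence separated, $s$ is a closed immersion; let $\overline{s(U)} \subset T$ be its scheme-theoretic closure. The induced morphism $\overline{s(U)} \to V$ is finite and birational from an integral scheme to the integral, normal (smooth) scheme $V$, hence an isomorphism by Zariski's main theorem, yielding a global section of $T$ and trivializing the torsor.

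Combining the above with the localization sequence gives the desired four-term exact sequence. The main delicate point is Step~2, namely the vanishing $H^2_{\mrm{fl}}(\mathcal{O}_v,\mc{F}) = 0$ and excision in flat cohomology with finite flat coefficients: unlike the \'etale case, flat cohomology does not automatically satisfy excision, and one needs to invoke the comparison between $\mathcal{O}_v^{h}$ and $\widehat{\mathcal{O}_v}$ and the reduction to Galois cohomology of the residue field for finite flat group schemes (standard results going back to Milne's \emph{Arithmetic Duality Theorems}, Ch.~III).
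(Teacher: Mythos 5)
Your proposal follows the same backbone as the paper's own proof: the localization sequence for flat cohomology with supports in $Z=V\setminus U$, followed by the identification of $H^2_Z(V,\mc{F})$ with $\bigoplus_{v\in Z}\Hfl(K_v,F)/\Hfl(\mathcal{O}_v,\mc{F})$ via excision and the local cohomology sequence at each $v$; the paper simply cites Milne's \emph{Arithmetic Duality Theorems}, III \S 7, for the vanishing of $H^1_Z(V,\mc{F})$ and for this identification. You differ in two places. For exactness on the left you replace the citation of $H^1_Z(V,\mc{F})=0$ by a direct geometric argument (a section over $U$ of a finite flat torsor, its scheme-theoretic closure, and the fact that a finite birational morphism onto a normal integral scheme is an isomorphism); this is correct and more self-contained than the paper's treatment. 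On the other hand, your justification of $H^2_{\mrm{fl}}(\mathcal{O}_v,\mc{F})=0$ --- that flat cohomology of the complete discrete valuation ring ``reduces to Galois cohomology of $k_v$'' --- is not right as stated for finite flat group schemes that are not smooth: already $H^1_{\mrm{fl}}(\mathcal{O}_v,\mu_p)\cong\mathcal{O}_v^{\times}/(\mathcal{O}_v^{\times})^{p}$ is much larger than $H^1(k_v,\mu_p)$ in characteristic $p$, so there is no such reduction in degree one. The vanishing in degree two is nevertheless a true and standard local fact for finite flat group schemes over a Henselian or complete discrete valuation ring with finite residue field (Milne, ADT III), which is exactly what the paper's citation supplies; so this is an imprecise heuristic rather than a genuine gap, and the rest of your argument goes through.
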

Though this claim seems to be known to the experts, 
we will give a short sketch of the proof by recalling fundamental facts 
on flat cohomologies. For fundamental tools and facts used in the proof, 
the reader can refer to \cite[Chapter III]{Mi}.
\begin{proof}[Proof of Claim $\ref{lem:localization}$] 
Let $Z=V\setminus U$ be a closed subscheme of $V$. 
We have a usual localization sequence:
\begin{equation}\label{equation:milne}
H^1_{Z} (V ,F) \lra 
\Hfl (V,\mc{F} )\lra \Hfl (U,\mc{F}) \lra H^2_{Z} (V ,\mc{F}), 
\end{equation}
where $H^i_Z (V,\mc{F})$ is a flat cohomology with support in $Z$. 
By \cite[III, \S$7$]{Mi}, we have:
$$ 
\begin{cases}
& H^1_Z (V,\mc{F}) = 0 \\
& H^2_Z (V,\mc{F}) = \underset{v\in V\setminus U}{\bigoplus} 
H^2_v (V,\mc{F}) \cong \underset{v\in V\setminus U}{\bigoplus} 
\dfrac{\Hfl (K_v ,F)}{\Hfl (\mathcal{O}_v ,\mc{F})}
\end{cases}
$$
This completes the proof. 
\end{proof}
\begin{proof}[Proof of Lemma $\ref{lem:goodred}$]
By Claim $\ref{lem:localization}$, we have: 
$$
0 \lra H^1_{\mrm{fl}} (C_K,\mc{A}[p^m]) \lra H^1_{\mrm{fl}} (C_K 
\setminus Z ,\mc{A}[p^m]) 
\lra \underset{v\in Z}{\oplus}
\dfrac{H^1_{\mrm{fl}} (K_v ,A[p^m])}
{H^1_{\mrm{fl}} (\mathcal{O}_v ,\mc{A}[p^m])}
$$
By taking inductive limit with respect to $Z$ and $m$, we have: 
$$
0 \lra H^1_{\mrm{fl}} (C_K,\mc{A}\{ p\}) \lra H^1_{\mrm{fl}} (K,A\{ p\}) 
\lra \underset{v\in C_K}{\oplus}
\dfrac{H^1_{\mrm{fl}} (K_v ,A\{ p\})}{H^1_{\mrm{fl}} (\mathcal{O}_v ,\mc{A}
\{ p\})}.
$$
On the other hand, we easily show that  
$\dfrac{H^1_{\mrm{fl}} (K_v ,A\{ p\})}{H^1_{\mrm{fl}} 
(\mathcal{O}_v ,\mc{A}\{ p\})} \cong H^1 (K_v ,A)(p)$ 
since $H^1_{\mrm{fl}} (\mathcal{O}_v ,\mc{A})\cong 
H^1_{\mrm{fl}} (\mbb{F}_v ,\oline{\mc{A}}_v )=0$ by \cite{Lang},  
where $\mbb{F}_v$ is the residue field of $\mathcal{O}_v$ 
and $\oline{\mc{A}}_v$ is the special fiber of $\mc{A}$ at 
$\mrm{Spec}(\mbb{F}_v)$. 
This completes the proof of Lemma $\ref{lem:goodred}$ 
by comparing with the exact sequence $(\ref{equation:seldef})$
which defines the Selmer group. 
\end{proof}
By using the following lemma, we reduce the proof of Theorem 
$\ref{thm:cycmu}$ into a simplified situation . 
\begin{lem}\label{pro:mainiso}
Let $F'/F$ be a finite Galois extension. 
We denote by $A'$ the extension $A'=A \times_F F'$.
If $\mu (X(A'/\Fpinf ))=0$, then we have $\mu (X(A/\fpinf ))=0$. 
\end{lem}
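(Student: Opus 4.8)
The plan is to reduce the statement to the standard criterion that, for a finitely generated torsion module $N$ over $\La(\Gamma^{(p)})\cong\Z_p[[T]]$ --- and likewise over $\La(G')$ for $G'=\Gal(\Fpinf/F')\cong\Z_p$ --- one has $\mu(N)=0$ if and only if $N$ is finitely generated as a $\Z_p$-module. Indeed, by the structure theorem there is a homomorphism with finite kernel and cokernel from $N$ to a module $\bigoplus_{i}\La(\Gamma^{(p)})/(p^{a_i})\oplus\bigoplus_{j}\La(\Gamma^{(p)})/(g_j)$ in which each $g_j$ is prime to $p$, and one has $\mu(N)=\sum_i a_i$. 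Since each summand $\La(\Gamma^{(p)})/(p^{a})$ with $a\ge 1$ is not finitely generated over $\Z_p$, each $\La(\Gamma^{(p)})/(g_j)$ is a free $\Z_p$-module of finite rank by Weierstrass preparation, and finite modules do not affect finite generation over the Noetherian ring $\Z_p$, the module $N$ is finitely generated over $\Z_p$ if and only if no factor $\La(\Gamma^{(p)})/(p^{a})$ with $a\ge 1$ occurs, i.e. if and only if $\mu(N)=0$.

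Next I would invoke the descent map already constructed in the proof of Lemma \ref{lem:ssred}. With $J=\Gal(F'/F'\cap\fpinf)\cong\Gal(\Fpinf/\fpinf)$, a finite group, there is a natural $\La(G')$-linear homomorphism
\[
X(A'/\Fpinf)_{J}\lra X(A/\fpinf),
\]
whose cokernel is a quotient of the Pontrjagin dual of the finite group $H^1(J,A\{p\}(\Fpinf))$, hence finite, and in which $G'$ is identified with an open subgroup of $\Gamma^{(p)}$. By Theorem \ref{thm:cyc} applied to $A'/F'$, the module $X(A'/\Fpinf)$ is finitely generated torsion over $\La(G')$, so the hypothesis $\mu(X(A'/\Fpinf))=0$ says exactly that $X(A'/\Fpinf)$ is finitely generated over $\Z_p$. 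Since $J$ is finite, the coinvariants $X(A'/\Fpinf)_{J}$ form a quotient of $X(A'/\Fpinf)$ and are therefore again finitely generated over $\Z_p$; hence the image of the displayed map is finitely generated over $\Z_p$, and since its cokernel in $X(A/\fpinf)$ is finite, $X(A/\fpinf)$ is finitely generated over $\Z_p$. As $X(A/\fpinf)$ is finitely generated torsion over $\La(\Gamma^{(p)})$ by Theorem \ref{thm:cyc}, the criterion of the preceding paragraph yields $\mu(X(A/\fpinf))=0$.

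I do not anticipate a genuine obstacle: the key input --- the $\La(G')$-linear map with finite cokernel --- is already available from the proof of Lemma \ref{lem:ssred}, and everything else is elementary $\Lambda$-module theory. The one point that deserves a word of care is the interplay between the two Iwasawa algebras $\La(G')$ and $\La(\Gamma^{(p)})$; routing the argument through the intrinsic property ``finitely generated over $\Z_p$'' --- which makes no reference to the $\Lambda$-structure --- rather than trying to compare $\mu$-invariants over the two rings directly, makes this a non-issue.
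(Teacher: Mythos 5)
Your proposal is correct and follows essentially the same route as the paper: identify $\mu=0$ with being finitely generated over $\Z_p$, and use the natural map $X(A'/\Fpinf)_J \to X(A/\fpinf)$ with finite cokernel already constructed in the proof of Lemma \ref{lem:ssred}. The paper's proof is just a terser version of this argument.
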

\begin{proof}
The assertion that $\mu (X(A/\fpinf ))=0$ (resp. $\mu (X(A'/\Fpinf ))=0$)
is equivalent to the assertion that $X(A/\fpinf )$ (resp. $X(A'/\Fpinf )$) 
is a finitely generated $\Z_p$-module. 
As the proof of Lemma $\ref{lem:ssred}$, we have a natural 
homomorphism $X(A'/\Fpinf )_J \lra X(A/\fpinf )$ whose cokernel 
is finite (Here, $J$ is $\mrm{Gal}(\Fpinf /\fpinf)$). 
Hence we complete the proof. 
\end{proof}
By Lemma $\ref{pro:mainiso}$, 
we may (and we will) assume that $A/F$ is isomorphic to an ordinary 
abelian variety or a supersingular abelian variety 
defined over a finite field in order to prove Theorem $\ref{thm:cycmu}$. 
First, we consider the case where the following condition is satisfied 
through this section for the proof of Theorem $\ref{thm:cycmu}$:  
\vspace*{5pt}
\\ 
{\bf (OF)} $A/F$ is isomorphic to an ordinary abelian variety 
defined over a finite field $\mathbb{F}_q \subset F$. 
\vspace*{5pt}
\\  
Note that $\finf$ is a Galois extension of 
$\fpinf$ with Galois group $\Gamma ^{(p')}\cong 
\uset{l\not= p}{\prod}{\Z_l}$. 
As in the argument of the proof of Theorem $\ref{thm:cyc}$, we have:  
$$
\Hfl (C_{\fpinf} ,A\{ p\})^{\lor} 
\cong (H^1_{\mrm{fl}}(C_{\finf} ,A\{ p\})^{\lor})
_{\Gamma ^{(p')}} .
$$
Thus we have reduced ``finiteness" over $\fpinf$ to 
Lemma $\ref{lem:Finf}$ (``finiteness" over $\finf$) below.
\begin{lem}\label{lem:Finf}
Let us assume the condition ${\bf (OF)}$ for $A/F$. 
Then, $\Hfl (C_{\finf},A\{ p\})^{\lor}$ is 
a finitely generated $\Z_p$-modules.
\end {lem}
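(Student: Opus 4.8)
The plan is to show that $\Hfl(C_{\finf}, A\{p\})$ is a cofinitely generated $\Z_p$-module, i.e. that its Pontrjagin dual is a finitely generated $\Z_p$-module. Since we are under the hypothesis {\bf (OF)}, the abelian variety $A$ is already an ordinary abelian variety defined over a finite field $\mathbb{F}_q \subset F$, so $A$ has everywhere good reduction and its N\'eron model $\mc{A}$ over $C_{\finf}$ is an abelian scheme; moreover $A\{p\}$ is a constant $p$-divisible group over the finite field, and over a scheme of characteristic $p$ the ordinarity of $A$ means its connected-\'etale sequence
$$
0 \lra A\{p\}^0 \lra A\{p\} \lra A\{p\}^{\mrm{\acute{e}t}} \lra 0
$$
has $A\{p\}^0$ of multiplicative type (dual to an \'etale group) and $A\{p\}^{\mrm{\acute{e}t}}$ \'etale, each of the same height $g=\dim A$. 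First I would reduce the problem, via the long exact flat cohomology sequence attached to this connected-\'etale sequence, to proving the analogous finiteness statement separately for the multiplicative-type piece $A\{p\}^0 \cong \mu_{p^\infty}^{\oplus g}$ (after a finite base extension trivializing it, using Lemma \ref{lem:ssred}-type descent as in Lemma \ref{pro:mainiso}) and for the \'etale piece $A\{p\}^{\mrm{\acute{e}t}} \cong (\Q_p/\Z_p)^{\oplus g}$.

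Next I would treat each piece by identifying the relevant flat cohomology with a more tractable object over the proper smooth curve $C_{\finf}$ over $\oline{\mbb{F}}_q$. For the \'etale part, $\Hfl(C_{\finf}, (\Q_p/\Z_p)^{\oplus g}) = H^1_{\mrm{\acute{e}t}}(C_{\finf}, \Q_p/\Z_p)^{\oplus g}$, whose dual is (up to the contribution of $H^0$) related to the Tate module of the (abelianized, pro-$p$ part of the) \'etale fundamental group of $C_{\finf}$, hence finitely generated over $\Z_p$. For the multiplicative part, $\Hfl(C_{\finf}, \mu_{p^\infty}^{\oplus g})$ is computed by Kummer theory: the Kummer sequence $0 \to \mu_{p^n} \to \Gm \to \Gm \to 0$ gives $\Hfl(C_{\finf},\mu_{p^n}) \cong \mrm{Pic}(C_{\finf})/p^n$ together with a contribution from $\cO(C_{\finf})^\times/p^n$; passing to the limit, $\Hfl(C_{\finf}, \mu_{p^\infty})^{\lor}$ is controlled by the $p$-divisible part of $\mrm{Pic}^0(C_{\finf}) = \mrm{Jac}(C_{\finf})(\oline{\mbb{F}}_q)$, and since $C_{\finf}$ is a curve over $\oline{\mbb{F}}_q$ its Jacobian's $p$-divisible group has finite corank $g_{C}$ (twice the genus, with the supersingular part contributing $0$), so again the dual is finitely generated over $\Z_p$. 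Throughout, one invokes that crystalline/flat cohomology of a proper smooth curve over $\oline{\mbb{F}}_q$ with these coefficients is finite-dimensional, as already used in Lemma \ref{lem1-section2}.

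Assembling these, the long exact sequence shows $\Hfl(C_{\finf}, A\{p\})^{\lor}$ sits in an exact sequence between the (finitely generated over $\Z_p$) duals of the \'etale and multiplicative contributions, together with $H^0$ and $H^2$ terms which are likewise finite or finitely generated; hence it is a finitely generated $\Z_p$-module. The main obstacle I anticipate is not the \'etale piece but keeping careful track of the $H^0$ and $H^2$ boundary terms in the connected-\'etale long exact sequence and in the Kummer sequence over the \emph{non-finitely-generated} base field $\finf = F\oline{\mbb{F}}_q$: over $\oline{\mbb{F}}_q$ the unit group and Picard group behave differently than over a finite field (e.g. $\cO(C_{\finf})^\times = \oline{\mbb{F}}_q^\times$ which is $p$-divisible), and one must check that these terms contribute only finitely generated $\Z_p$-modules after dualizing, rather than something of infinite corank. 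A clean way around this is to reduce everything, as in the proof of Theorem \ref{thm:cyc}, to the crystalline/Dieudonn\'e description: under {\bf (OF)} the Dieudonn\'e crystal $D(\mc{A})$ is an ordinary (hence, up to isogeny, a sum of the unit-root crystal and its dual) $F$-crystal with $Z=\emptyset$, so $X(A/\finf)$ is a quotient of $H^1$ of a unit-root $F$-isocrystal on the proper smooth curve $C_{\finf}$, whose finiteness over $\Z_p$ follows from the finite-dimensionality of crystalline cohomology (Tsuji \cite{T}) together with the standard fact (Sub-Lemma \ref{sblem-section2}) that $1-\varphi$ on a finite-dimensional $F$-space has finite-dimensional kernel over $\Q_p$.
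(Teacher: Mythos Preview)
Your proposal is essentially correct and follows the same route as the paper: split $A\{p\}$ via the connected--\'etale sequence into $(\mu_{p^\infty})^{g}$ and $(\Q_p/\Z_p)^{g}$ (no further base extension is needed, since over $\oline{\F}_q$ these are already constant), then handle the multiplicative piece by Kummer theory and the Jacobian, and the \'etale piece by Artin--Schreier. Two small slips to fix: the Kummer sequence gives $\Hfl(C_{\finf},\mu_{p^n})\cong \mrm{Pic}(C_{\finf})[p^n]$ (the $p^n$-torsion, not $\mrm{Pic}/p^n$), since $\cO(C_{\finf})^\times=\oline{\F}_q^{\times}$ is $p$-divisible; and the $p$-corank of $J_{C_{\finf}}(\oline{\F}_q)\{p\}$ is at most the genus of $C_{\finf}$, not twice the genus. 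The alternative crystalline argument you sketch at the end is unnecessary here---the elementary Kummer/Artin--Schreier computation already suffices, and is exactly what the paper does.
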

\begin{pf} 
Since $A$ is an abelian variety with good ordinary reduction, 
the connected part (resp. \'{e}tale part) $A\{ p\}^{\mrm{conn}}$ 
(resp. $A\{ p\}^{\text{\'{e}t}}$) of $A\{ p\}$ has rank $g=\mrm{dim}
(A)$. Note that a finite flat group scheme $A \{ p\}^{\mrm{conn}}$ 
(resp. $A \{ p\}^{\text{\'{e}t}}$) over $C_{\finf}$ is a constant 
scheme defined over the base $\oline{\mathbb{F}}_p$ of the curve 
$C_{\finf}$. Since 
$A \{ p\}^{\mrm{conn}}$ (resp. $A \{ p\}^{\text{\'{e}t}}$) 
is isomorphic to $(\mu_{p^{\infty}})^g$ (resp. $(\Q_p /\Z_p)^g$) over 
$\oline{\mathbb{F}}_p$, we have the following exact sequence: 
$$
H^1_{\mrm{fl}} (C_{\finf} ,(\mu_{p^{\infty}})^g) 
\lra H^1_{\mrm{fl}} (C_{\finf} ,A\{ p\}) 
\lra H^1_{\mrm{fl}} (C_{\finf} ,(\Q_p /\Z_p)^g). 
$$ 
Thus we reduce the proof of Lemma $\ref{lem:Finf}$ to the following claim:
\begin{claim}\label{lem:wakeru}
$\Hfl (C_{\finf},\mu_{p^{\infty}})^{\lor}$ and 
$\Hfl (C_{\finf},\Q_p /\Z_p)^{\lor}$ are 
free $\Z_p$-modules of finite rank.
\end {claim}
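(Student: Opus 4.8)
The plan is to compute $H^1_{\mrm{fl}}(C_{\finf},\mu_{p^\infty})$ and $H^1_{\mrm{fl}}(C_{\finf},\Q_p/\Z_p)$ directly, using that $C_{\finf}$ is a proper smooth geometrically connected curve over the algebraically closed field $\oline{\mbb{F}}_q$. First I would handle $\mu_{p^\infty}$: from the Kummer sequence $0\to\mu_{p^n}\to\Gm\to\Gm\to 0$ on the flat (equivalently, here, étale away from $p$ — but we must stay with flat cohomology since $p=\mathrm{char}$) site, one gets $0\to\mrm{Pic}(C_{\finf})/p^n\to H^1_{\mrm{fl}}(C_{\finf},\mu_{p^n})\to H^2_{\mrm{fl}}(C_{\finf},\Gm)[p^n]\to 0$. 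Since $C_{\finf}$ is a smooth proper curve over an algebraically closed field, $H^2_{\mrm{fl}}(C_{\finf},\Gm)$ is the Brauer group, which vanishes (Tsen's theorem). Passing to the limit over $n$, $H^1_{\mrm{fl}}(C_{\finf},\mu_{p^\infty})\cong \varinjlim_n \mrm{Pic}(C_{\finf})/p^n$. Writing the Picard group as an extension of $\Z$ (degree) by $\mrm{Pic}^0(C_{\finf})=J(\oline{\mbb{F}}_q)$, the points of the Jacobian, and noting that $J(\oline{\mbb{F}}_q)$ is a divisible group (an abelian variety over an algebraically closed field is divisible), one finds $\mrm{Pic}(C_{\finf})/p^n\cong \Z/p^n$, so $H^1_{\mrm{fl}}(C_{\finf},\mu_{p^\infty})\cong \Q_p/\Z_p$, whose Pontrjagin dual is $\Z_p$, a free $\Z_p$-module of rank $1$.

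Next I would treat $\Q_p/\Z_p$. Since $\Q_p/\Z_p=\varinjlim_n \Z/p^n$ and $\Z/p^n$ is a constant (étale) group scheme, $H^1_{\mrm{fl}}(C_{\finf},\Z/p^n)=H^1_{\mrm{\acute{e}t}}(C_{\finf},\Z/p^n)=\Hom_{\mrm{cont}}(\pi_1(C_{\finf}),\Z/p^n)$. The key input is that the maximal pro-$p$ quotient of $\pi_1$ of a smooth proper curve of genus $g$ over an algebraically closed field of characteristic $p$ is a free pro-$p$ group on $g$ generators (this is a theorem of Shafarevich; the wild part of the fundamental group of an affine curve is huge, but for the proper curve the $p$-part of the abelianization is exactly $\Z_p^g$, matching the $p$-rank of the Jacobian, which is $\le g$, and in any case finite). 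Hence $H^1_{\mrm{fl}}(C_{\finf},\Q_p/\Z_p)\cong (\Q_p/\Z_p)^{\gamma}$ for some integer $\gamma\le g$ (the $p$-rank of $J$), and its Pontrjagin dual is $\Z_p^{\gamma}$, free of finite rank.

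The main obstacle I anticipate is being careful about flat versus étale cohomology and about the behaviour of all these groups under the base field $\oline{\mbb{F}}_q$ (which is algebraically closed, so the Galois-descent subtleties disappear, but one must confirm that $C_{\finf}$ is genuinely the base change of a curve over $\mbb{F}_q$ and that $\mrm{Pic}^0$ and the pro-$p$ fundamental group behave as claimed over $\oline{\mbb{F}}_q$). For $\mu_{p^\infty}$ the delicate point is justifying $H^2_{\mrm{fl}}(C_{\finf},\Gm)=0$ and the divisibility of $J(\oline{\mbb{F}}_q)$, which together force the $\mu$-part to be exactly $\Q_p/\Z_p$; for $\Q_p/\Z_p$ the delicate point is invoking the correct finiteness statement for the pro-$p$ étale fundamental group of the proper curve (as opposed to an open curve, where it would fail). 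Once these structural facts are in place, taking Pontrjagin duals immediately yields that both duals are free $\Z_p$-modules of finite rank, completing the proof of the claim and hence of Lemma \ref{lem:Finf}.
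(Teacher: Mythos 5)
Your treatment of the $\Q_p/\Z_p$ coefficient via the pro-$p$ \'etale fundamental group is correct and a legitimate alternative to the paper's Artin--Schreier computation; both produce $H^1_{\mrm{fl}}(C_{\finf},\Q_p/\Z_p)\cong(\Q_p/\Z_p)^{\gamma}$ with $\gamma$ the $p$-rank, since the kernel of $x\mapsto x-x^p$ on $H^1(C_{\finf},\mc{O}_{C_{\finf}})$ is precisely $\mrm{Hom}(\pi_1^{\mrm{ab}},\F_p)$. However, your computation of $H^1_{\mrm{fl}}(C_{\finf},\mu_{p^\infty})$ contains a genuine error. The Kummer long exact sequence attached to $0\to\mu_{p^n}\to\mbb{G}_m\to\mbb{G}_m\to 0$ yields
\begin{equation*}
0\lra H^0(C_{\finf},\mbb{G}_m)/p^n \lra H^1_{\mrm{fl}}(C_{\finf},\mu_{p^n})\lra \mrm{Pic}(C_{\finf})[p^n]\lra 0,
\end{equation*}
not the sequence you wrote: you have shifted a cohomological index, since $0\to\mrm{Pic}/p^n\to(\,\cdot\,)\to H^2(\mbb{G}_m)[p^n]\to 0$ is the fragment computing $H^2(C_{\finf},\mu_{p^n})$. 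Since $H^0(C_{\finf},\mbb{G}_m)=\oline{\F}_q^{\times}$ is $p$-divisible, the correct conclusion is $H^1_{\mrm{fl}}(C_{\finf},\mu_{p^n})\cong\mrm{Pic}(C_{\finf})[p^n]\cong J_{\finf}(\oline{\F}_q)[p^n]$, so in the limit $H^1_{\mrm{fl}}(C_{\finf},\mu_{p^\infty})\cong(\Q_p/\Z_p)^{r}$, where $r$ is the $p$-rank of the Jacobian. Your derivation (applying divisibility of $J_{\finf}(\oline{\F}_q)$ to $\mrm{Pic}/p^n$, which then collapses to $\Z/p^n$) consistently follows the wrong sequence and lands on the wrong group, of corank $1$ rather than $r$; it is only by luck that the final verdict --- a free $\Z_p$-dual of finite rank --- still reads true.

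Once the Kummer sequence is corrected, your route is actually cleaner than the paper's, which reaches $H^1_{\mrm{fl}}(C_{\finf},\mu_{p^\infty})\cong\mrm{Cl}(\finf)\{p\}$ by way of the localization sequence of Claim~$\ref{lem:localization}$ and a valuation diagram, rather than by invoking Kummer theory directly on the proper curve. Both arguments ultimately rest on the fact, which you should cite explicitly as the paper does (to Mumford), that the $p$-primary torsion of an abelian variety over an algebraically closed field of characteristic $p$ is $(\Q_p/\Z_p)^{r}$ with $0\le r\le\dim$; that structural statement is what makes the rank finite and the dual free.
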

Let us prove the claim in the rest of the proof. 
We first study 
$\Hfl (C_{\finf},\mu_{p^{\infty}})^{\lor}$. 
For a scheme $V$, \cite[Proposition 3.7]{Mi1} gives us 
an equality $H^1_{\mrm{fl}}(V ,\mc{O}_{V}^{\times})\cong 
\mrm{Pic}(V)$. 
On the other hand, 
we have an exact sequence: 
$$
0\lra \mu_{p^n} \lra \mc{O}^{\times}_V 
\overset{\times p^n}{\lra} \mathcal{O}^{\times}_V \lra 0 
$$  
on the flat site over $V$. 
This implies that 
$\Hfl (\mrm{Spec}(R) ,\mu_{p^n}) = R^{\times}/( R^{\times})^{p^n}$ 
for a local ring $R$. 
By Lemma $\ref{lem:localization}$, 
we have an exact sequence:
$$
0\lra \Hfl (C_{\finf}, \mu_{p^n} )\lra 
(\finf)^\times/(\finf)^\times)^{p^n}
\lra \underset{v\in C_{\finf} 
\setminus \mathrm{Spec} (\finf )}{\bigoplus} 
\dfrac{(F_{\infty ,v})^\times/((F_{\infty ,v})^\times)^{p^n}}
{\Cal{O}_v^\times/(\Cal{O}_v^\times)^{p^n}},
$$
where $\Cal{O}$ is the ring of integers of $\finf $. 
Note that we have$:$ 
$$
\dfrac{(F_{\infty ,v})^\times/((F_{\infty ,v})^\times)^{p^n}}
{\Cal{O}_v^\times/(\Cal{O}_v^\times)^{p^n}} 
\cong 
\dfrac{(F_{\infty ,v})^\times/\Cal{O}_v^\times} 
{\bigl( (F_{\infty ,v})^\times/\Cal{O}_v^\times 
\bigr)^{p^n}} 
\cong 
\Z /(p^n).
$$
Let us consider the following commutative diagram$:$ 
$$
\begin{CD}
0 @>>> (\finf )^{\times} @>{\times p^n}>> 
(\finf )^{\times} 
@>>> (\finf )^{\times}/ ((\finf )^{\times} )^{p^n} 
@>>> 0 \\
@. @V{\oplus \mrm{val}_v}VV @VV{\oplus \mrm{val}_v}V 
@VVV @. \\
0 @>>> \underset{v}{\bigoplus}  
\Z @>{\times p^n}>> 
\underset{v}{\bigoplus}  
\Z @>>> \underset{v}{\bigoplus}  
\Z /p^n \Z @>>> 0, 
\end{CD}
$$
where $v$ runs the primes of $v\in C_{\finf}
\setminus \mathrm{Spec}(\finf )$ for the summations 
in the lower line. 
The kernel of the middle vertical map is equal to 
$(\oline{\mbb{F}}_p )^{\times}$. 
By applying the snake lemma to the above diagram, 
we get the following exact sequence
$$0 \lra  (\oline{\mbb{F}}_p )^{\times} / ((\oline{\mbb{F}}_p )^{\times})^{p^n}
\lra \Hfl (C_{\finf},\mu_{p^n}) \to \mrm{Cl} (\finf ) 
[p^n] \lra  0.$$
Since $(\oline{\mbb{F}}_p )^{\times} / ((\oline{\mbb{F}}_p )^{\times})^{p^n}$ 
is trivial, we have $\Hfl (C_{\finf},\mu_{p^{\infty}}) 
\cong \mrm{Cl}(\finf ) \{ p \}$. 
By [W], we have an isomorphism 
$$
\mrm{Cl}(\finf ) \{ p \}\simeq J_{\finf }
(\oline{\mbb{F}}_p)\{ p \}$$
where $J_{\finf }$ is the Jacobian variety 
associated to $C_{\finf}$. 
However, by [Mu], we have $J_{\finf}(\oline{\mbb{F}}_p)
\{p\}\simeq (\Q_p/\Z_p)^r$ 
for some $0\le r\le \mrm{genus}(C_{\finf})$.
\par 
Next, we study $\Hfl (C_{\finf},\Q_p /\Z_p )^{\lor}$.
Let us recall the following exact sequence$:$ 
\begin{equation}\label{equation:exactas}
H^i_{\mathrm{fl}} (C_{\finf},\Z /(p) \Z )\cong \mathrm{Ker}\left[ 
H^i_{\mathrm{fl}} (C_{\finf},\mc{O}_{C_{\finf}})
\xrightarrow{x\to x -x^p} 
H^i_{\mathrm{fl}} (C_{\finf},\mc{O}_{C_{\finf}})
\right] ,
\end{equation}
where the last map is induced by the Artin-Scherier sequence$:$ 
$$
0 \lra \Z /(p) \Z \lra \mathcal{O}_{C_{\finf}} 
\xrightarrow{x\to x -x^p}  
\mathcal{O}_{C_{\finf}} \lra 0 ,
$$
on the flat site over $C_{\finf}$. Since 
$H^{i-1}_{\mathrm{fl}} (C_{\finf},
\mathcal{O}_{C_{\finf}})$ 
is isomorphic to 
$H^{i-1}_{\mathrm{Zar}} (C_{\finf},
\mathcal{O}_{C_{\finf}})$, 
it is a finite dimensional $\oline{\mbb{F}}_q$-vector 
space. Hence $H^i_{\mathrm{fl}} (C_{\finf},
\mathcal{O}_{C_{\finf}} )
\xrightarrow{x\to x -x^p}
H^i_{\mrm{fl}} (C_{\finf},\mathcal{O}_{C_{\finf}} )$ 
is surjective for any $i$. 
Further, $H^1_{\mathrm{Zar}} (C_{\finf},\mathcal{O}_{C_{\finf}})$ 
is isomorphic to an $r$-dimensional vector space over $\oline{\mbb{F}}_q$. 
We have $H^1_{\mathrm{fl}} (C_{\finf},\Z /(p) \Z ) \cong 
(\Z /(p)\Z)^{\oplus r}$ by $(\ref{equation:exactas})$. On the other hand, 
$H^2_{\mrm{fl}} (C_{\finf},\Z /(p)\Z )$ is trivial 
since $H^2_{\mrm{fl}} (C_{\finf},\mathcal{O}_{C_{\finf}})=
H^2_{\mrm{Zar}} (C_{\finf},\mathcal{O}_{C_{\finf}})$ is trivial. 
This gives the following exact sequence: 
$$
0\lra 
H^1_{\mrm{fl}} (C_{\finf},\Z /(p) \Z ) \lra 
H^1_{\mrm{fl}} (C_{\finf},\Q_p /\Z_p ) \overset{\times p}{\lra} 
H^1_{\mrm{fl}} (C_{\finf},\Q_p /\Z_p ) \lra 0
$$
Since $H^1_{\mrm{fl}} (C_{\finf},\Q_p /\Z_p )^{\vee}/(p)
H^1_{\mrm{fl}} (C_{\finf},\Q_p /\Z_p )^{\vee}$ is finite, 
$H^1_{\mrm{fl}} (C_{\finf},\Q_p /\Z_p )^{\vee}$ is a finitely generated 
$\Z_p$-module by Nakayama's lemma. 
Further, $H^1_{\mrm{fl}} (C_{\finf},\Q_p /\Z_p )^{\vee}$ has no 
non-trivial $p$-torsion elements since 
$H^1_{\mrm{fl}} (C_{\finf},\Q_p /\Z_p )$ is $p$-divisible. 
Thus, $H^1_{\mrm{fl}} (C_{\finf},\Q_p /\Z_p )^{\vee}$ is a free 
$\Z_p$-module of rank $r$. 
This completes the proof of the claim and hence the proof 
of Lemma $\ref{lem:Finf}$ is completed. 
\end{pf}
By a remark given before Lemma $\ref{lem:Finf}$, the proof of 
Theorem $\ref{thm:cycmu}$ has been done in the ordinary case. 
\par 
Next, we discuss the supersingular case. 
By Lemma $\ref{pro:mainiso}$, 
we may assume the case where the following condition is satisfied: 
\vspace*{5pt}
\\ 
{\bf (SF)} $A/F$ is isomorphic to a supersingular abelian variety 
defined over a finite field $\mathbb{F}_q \subset F$ and 
the proper smooth curve $C_F$ which is the model of $F$ has 
invertible Hasse-Witt matrix. 
\vspace*{5pt}
\\ 
The assertion that $\mu (X(A/\finf))=0$ is equivalent to 
the assertion that $H^1_{\mrm{fl}} (C_{\finf} ,A\{ p\})$ is $p$-divisible. 
Now, we consider the following exact sequence: 
$$
H^1_{\mrm{fl}} (C_{\finf} ,A\{ p\})\overset{\times p}{\lra} 
H^1_{\mrm{fl}} (C_{\finf} ,A\{ p\}) \lra H^2_{\mrm{fl}} (C_{\finf},A[p]).
$$
Let $\bd{\alpha}_p$ be a finite group scheme defined to be 
the kernel of the frobenius map $F:\ \mbb{G}_a \lra \mbb{G}_a$. 
Since $A$ is supersingular, $A[p]$ is isomorphic to 
$\bd{\alpha}_p^{\oplus 2g}$, it suffices to show that 
$H^2 (C_{\finf},\bd{\alpha}_p)=0$. 
Now, from the exact sequence 
$$
H^1_{\mrm{fl}} (C_{\finf},\mc{O}_{C_{\finf}}) \overset{F}{\lra} 
H^1_{\mrm{fl}} (C_{\finf},\mc{O}_{C_{\finf}}) \lra 
H^2_{\mrm{fl}} (C_{\finf},\bd{\alpha}_p) \lra 0, 
$$
the assertion that $H^2 (C_{\finf},\bd{\alpha}_p)=0$ 
is equivalent to the assertion that the map $F$ on 
$H^1_{\mrm{fl}} (C_{\finf},\mc{O}_{C_{\finf}})$ is surjective. 
Since the last assertion is equivalent to the assumption that 
the Hasse-Witt matrix for $C_F$ is invertible, we complete the proof for 
the first assertion of Theorem $\ref{thm:cycmu}$. 
\par 
If the Hasse-Witt matrix for $C_F$ is not invertible, 
the kernel of $F$ on 
$H^1_{\mrm{fl}} (C_{\finf},\mc{O}_{C_{\finf}})$ contains 
a non-trivial $\oline{\mbb{F}}_p$-vector space and 
the Frobenius on $H^0_{\mrm{fl}} (C_{\finf},\mc{O}_{C_{\finf}})
\cong \oline{\mbb{F}}_p$ is surjective. 
Thus $H^1_{\mrm{fl}}(C_{\finf},A [p])^{\vee} \cong X(A/\fpinf)/(p)
X(A/\fpinf)$ has infinite $p$-rank and $\mu (X(A/\fpinf))$ has to be 
positive if the Hasse-Witt matrix for $C_F$ is not invertible. 
This completes the proof of Theorem $\ref{thm:cycmu}$. 
\section{A result over a $p$-adic Lie extension $L$}
\label{section:isotriv}
In this section, we prove Theorem $\ref{thm:isotriv}$. 
For the proof of Theorem $\ref{thm:isotriv}$, we introduce 
following new Selmer group: 
\begin{defn}
Let $A$ be an abelian variety over $F$ and let $S$ be a finite set of primes 
of $F$. By using the extension 
$0\lra A\{ p\}^{\mrm{conn}} \lra A\{ p\} \lra A\{ p\}^{\text{\'{e}t}} \lra 
0$ of p-divisible groups over $F$, we define the Selmer group over an 
algebraic extension $K$ over $F$ as follows:  
\begin{equation}\label{def:selgr}
\sel^{S}(A/K ) :=\mrm{Ker}\left[ H^1_{\mrm{fl}}(K ,A\{ p\})  
\lra \underset{v\in S_K}{\prod} 
H^1_{\mrm{fl}}(K_v ,A\{ p\}^{\text{\'{e}t}}) \times 
\underset{v\not\in S_K}{\prod} 
H^1_{\mrm{fl}}(K_v ,A) \right] ,
\end{equation}
where $S_K$ is a set of primes of $K$ lying over the primes in $S$. 
\end{defn}
From now on, 
we fix a $g$-dimensional abelian variety $A$ over $F$, which has 
good reduction outside a finite set $S$ of primes in $F$ 
and has ordinary reduction at each prime in $S$ as in 
Theorem $\ref{thm:isotriv}$. 
We give the following lemma on comparison between Selmer groups: 
\begin{lem}\label{lem:selcomp} 
Let the assumption be as in Theorem $\ref{thm:isotriv}$.
\begin{enumerate}
\item 
For any extension $K$ of $F$, $\sel (A/K)$ is naturally 
identified as a subgroup of $\sel^{S}(A/K)$. 
\item 
For any $K$ contained in $F_{\infty}$, 
the quotient $\sel^{S}(A/K )/\sel (A/K )$ is a cofinitely generated  
$\Z_p$-module whose corank is less than or equal to 
$g \sharp S$. 
\end{enumerate}
\end{lem}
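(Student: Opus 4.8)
\textbf{Proof proposal for Lemma \ref{lem:selcomp}.}

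The plan is to compare the two Selmer groups prime-by-prime over the local points in $S_K$, since away from $S$ the defining local conditions are literally the same (good reduction, condition ``$H^1_{\mathrm{fl}}(K_v,A)$'' in both definitions). First I would note that for $v\in S_K$ the abelian variety $A$ has ordinary reduction, so over the henselization (or completion) $\mathcal{O}_v$ there is the connected-\'etale sequence $0\to A\{p\}^{\mathrm{conn}}\to A\{p\}\to A\{p\}^{\text{\'et}}\to 0$ of $p$-divisible groups, and $A\{p\}^{\text{\'et}}$ is the maximal \'etale quotient. The key local input is that the natural map $H^1_{\mathrm{fl}}(K_v,A\{p\})\to H^1_{\mathrm{fl}}(K_v,A\{p\}^{\text{\'et}})$ has kernel mapping onto the ``good'' local condition that defines $\sel(A/K)$, i.e.\ the image of $A(K_v)\otimes\Q_p/\Z_p$ inside $H^1_{\mathrm{fl}}(K_v,A\{p\})$, up to a controlled discrepancy coming from $H^1_{\mathrm{fl}}(K_v,A\{p\}^{\mathrm{conn}})$. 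Thus the local condition defining $\sel^{S}$ at $v\in S_K$ is \emph{weaker} than the one defining $\sel$, which immediately gives assertion (1): $\sel(A/K)\subseteq\sel^{S}(A/K)$ for every $K$.

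For assertion (2), I would set up the exact sequence
\begin{equation}\label{eq:selquotient}
0\lra \sel(A/K)\lra \sel^{S}(A/K)\lra \prod_{v\in S_K}\frac{L^{S}_v}{L_v},
\end{equation}
where $L_v\subseteq H^1_{\mathrm{fl}}(K_v,A\{p\})$ is the local condition $\mathrm{Ker}[H^1_{\mathrm{fl}}(K_v,A\{p\})\to H^1_{\mathrm{fl}}(K_v,A)]$ used in \eqref{equation:seldef} and $L^{S}_v=\mathrm{Ker}[H^1_{\mathrm{fl}}(K_v,A\{p\})\to H^1_{\mathrm{fl}}(K_v,A\{p\}^{\text{\'et}})]$ is the one used in \eqref{def:selgr}; the quotient $\sel^{S}(A/K)/\sel(A/K)$ injects into $\bigoplus_{v\in S_K} L^{S}_v/L_v$. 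Using the connected-\'etale sequence, $L^{S}_v/L_v$ is a subquotient of $H^1_{\mathrm{fl}}(K_v,A\{p\}^{\mathrm{conn}})$, and since $A\{p\}^{\mathrm{conn}}$ has height $g$, each such local term is a cofinitely generated $\Z_p$-module of corank $\le g$. Here one must be careful that, while $S_K$ can be infinite when $K/F$ is infinite (e.g.\ $K\subseteq F_\infty$), the number of primes of $K$ above a given prime of $F$ is finite for $K\subseteq F_\infty$ because $\mathrm{Gal}(F_\infty/F)\cong\widehat{\Z}$ has only finitely many primes splitting completely; more precisely each prime of $F$ in $S$ has a residue-degree pattern so that the local extension $K_v/F_w$ absorbs almost all of the decomposition, and one gets that $\#S_K$ is finite — wait, this needs the right bookkeeping, so let me instead phrase the bound intrinsically: the corank of $\bigoplus_{v\in S_K}L^{S}_v/L_v$ over $\Z_p$ is at most $g$ times the number of primes of $K$ above $S$, and for the statement ``$\le g\,\sharp S$'' one invokes that over $K\subseteq F_\infty$ the map $\bigoplus_{v\in S_K}(\cdots)$ factors through a single ``base-changed'' local term per prime of $F$, since $F_\infty$ is obtained by extension of the (finite) constant field and the relevant connected $p$-divisible group is already defined over $\mathbb{F}_q$, so its flat cohomology over $K_v$ does not grow.

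The main obstacle I anticipate is precisely this last point: controlling the \emph{corank} (not just cofinite generation) of $\sel^{S}/\sel$ uniformly in $K\subseteq F_\infty$ by $g\,\sharp S$, rather than by $g$ times the (a priori larger) number of primes of $K$ above $S$. The resolution should come from the fact that for $v\in S$ the reduction is ordinary with connected part $A\{p\}^{\mathrm{conn}}$ a multiplicative-type $p$-divisible group, so $H^1_{\mathrm{fl}}(K_v,A\{p\}^{\mathrm{conn}})$ is computed via Kummer theory ($\varprojlim$ of $K_v^\times/(K_v^\times)^{p^n}$-type groups), and its $\Z_p$-corank is governed by $g$ and is \emph{insensitive} to replacing $F_w$ by $K_v$ because passing up the $\widehat{\Z}$-tower only enlarges the residue field, which contributes nothing new to the local Kummer group of a torus (the valuation part is rank $g$, bounded independently of $K$). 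Once that local uniformity is in hand, summing over the primes of $F$ in $S$ gives the bound $g\,\sharp S$, and \eqref{eq:selquotient} finishes the proof.
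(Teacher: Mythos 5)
Your overall plan matches the paper's: compare the two Selmer groups prime-by-prime over $S$, isolate the local discrepancy at each $v\in S$, and bound it using the multiplicative ($\simeq\mu_{p^\infty}^g$) structure of $A\{p\}^{\mathrm{conn}}$. But there are two genuine gaps in the execution.

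For part (1), you assert that the local condition defining $\sel^{S}$ is weaker than the one defining $\sel$, i.e.\ that $A(K_v)\otimes\Q_p/\Z_p$ lands inside $B_{K_v}:=\mathrm{Ker}\bigl[H^1_{\mathrm{fl}}(K_v,A\{p\})\to H^1_{\mathrm{fl}}(K_v,A\{p\}^{\text{\'et}})\bigr]$, but you do not prove it; the phrase ``kernel mapping onto the good local condition \dots up to a controlled discrepancy'' is not a statement that yields the containment. What actually makes this work is that the map $\widehat{\mc{A}}_v(\mc{O}_v)\otimes\Q_p/\Z_p\to A(K_v)\otimes\Q_p/\Z_p$ is \emph{surjective}, because the quotient $A(K_v)/\widehat{\mc{A}}_v(\mc{O}_v)$ injects into $\oline{\mc{A}}_v(\mbb{F}_v)$, a direct limit of finite groups, which dies after $\otimes\,\Q_p/\Z_p$; and the image of $\widehat{\mc{A}}_v(\mc{O}_v)\otimes\Q_p/\Z_p$ factors through $H^1_{\mathrm{fl}}(K_v,\widehat{\mc{A}}_v[p^n])$, hence lands in the connected part $B_{K_v}$. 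You need that surjectivity claim; without it part (1) is unproved.

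For part (2), the step ``$L^{S}_v/L_v$ is a subquotient of $H^1_{\mathrm{fl}}(K_v,A\{p\}^{\mathrm{conn}})$, and since $A\{p\}^{\mathrm{conn}}$ has height $g$, each such local term is \dots of corank $\le g$'' is simply false as reasoning. For $K_v$ with infinite (perfect) residue field, $H^1_{\mathrm{fl}}(K_v,\mu_{p^\infty})\cong K_v^\times\otimes\Q_p/\Z_p$ has \emph{infinite} $\Z_p$-corank (the principal-unit part $U^1_{K_v}\otimes\Q_p/\Z_p$ is enormous), so being a subquotient of $H^1_{\mathrm{fl}}(K_v,A\{p\}^{\mathrm{conn}})$ gives no bound at all. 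Your closing paragraph gropes toward the correct mechanism (valuation contributes rank $g$, units contribute nothing), but the point is that $L^S_v/L_v$ is not merely a subquotient but a quotient of the \emph{cokernel}
$$C_{K_v}=\mathrm{Coker}\Bigl[\widehat{\mc{A}}_v(\mc{O}_v)\otimes\Q_p/\Z_p\lra\varinjlim_n H^1_{\mathrm{fl}}(K_v,\widehat{\mc{A}}_v[p^n])\Bigr],$$
and it is precisely the image of the formal group that swallows the infinite-corank unit piece $U^1\otimes\Q_p/\Z_p$, leaving only the valuation contribution $(\Q_p/\Z_p)^g$. The paper establishes this by descending from $K^{\mathrm{ur}}_v$ to $K_v$ via Hochschild--Serre, using that $\widehat{\mc{A}}_v[p^n](\mc{O}^{\mathrm{ur}}_v)=0$ (ordinary connected part has no $p$-torsion over the unramified ring) to identify both source and target of the cokernel with $\Gamma$-invariants, and then computing the cokernel over $K^{\mathrm{ur}}_v$ by Kummer theory to get $(\Q_p/\Z_p)^g$. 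You need this cokernel structure and the unramified-descent argument, not just the height of the $p$-divisible group; the worry about $\sharp S_K$ versus $\sharp S$ you flag is a secondary bookkeeping issue, whereas the missing cokernel analysis is the real gap.
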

\begin{proof}
For an extension $K$ of $F$, 
we denote by $B_{K_v}$ the subgroup of $H^1_{\mrm{fl}}(K_v,A\{ p\} )$ 
given by 
\small 
$$
B_{K_v}:=\mrm{Ker}\left[ H^1_{\mrm{fl}}(K_v,A\{ p\} ) \lra 
H^1_{\mrm{fl}}(K_v ,A\{ p\}^{\text{\'{e}t}}) \right] 
=\mrm{Im} \left[ H^1_{\mrm{fl}}(K_v,A\{ p\}^{\mrm{conn}} ) \lra 
H^1_{\mrm{fl}}(K_v ,A\{ p\}) \right] .
$$
\normalsize 
Recall that $A(K_v )\otimes \Q_p /\Z_p $ is regarded as a subgroup of 
$H^1_{\mrm{fl}}(K_v,A\{ p\} )$ and the kernel of 
$H^1_{\mrm{fl}}(K_v,A\{ p\} )\lra H^1_{\mrm{fl}}(K_v,A )$ 
is equal to $A(K_v )\otimes \Q_p /\Z_p $. 
By comparing the definition of $\sel (A/K)$ and $\sel^{S} (A/K)$ 
($(\ref{equation:seldef})$ and $(\ref{def:selgr})$, respectively), 
$\sel (A/K)$ is a subgroup of $\sel^{S} (A/K)$ if and only if 
$A(K_v )\otimes \Q_p /\Z_p $ is contained in $B_{K_v}$ for every $v\in S$. 
We denote by $\mc{A}_v$ the Neron model on the ring of integers of $K_v$ 
associated to the abelian variety $A/K_v$.
Let $\widehat{\mc{A}}_v$ be the formal completion 
of $\mc{A}_v$ along the zero section of the special fiber 
of $\mc{A}_v$. Since $A$ has ordinary reduction at $v$, 
$\widehat{\mc{A}}_v[p^n]$ is a finite flat group scheme 
which is \'{e}tale locally isomorphic to 
a finite flat group scheme $(\mu_{p^n})^{g \sharp S}$ over $\mc{O}_v$ 
whose generic fiber is isomorphic to the connected part $A[p^n]^{\mrm{conn}}$ 
of the finite flat group scheme $A[p^n]$ over $K_v$. 
We have the following diagram: 
\begin{equation}\label{equation:flgp}
\begin{CD}
0 @>>> \widehat{\mc{A}}_v ( \mc{O}_v )\otimes_{\Z}\Q_p /\Z_p  
@>{\delta}>> \varinjlim_n 
H^1_{\mrm{fl}} (K_v , \widehat{\mc{A}}_v [p^n]) @>{\epsilon}>> 
H^1_{\mrm{fl}} (K_{v} ,\widehat{\mc{A}}_v)\\ 
@. @V{\alpha }VV @VV{\beta}V @VV{\gamma}V \\
0 @>>> A(K_{v})\otimes_{\Z}\Q_p /\Z_p  @>>> 
H^1_{\mrm{fl}} (K_{v} ,A\{ p\}) @>>> 
H^1_{\mrm{fl}} (K_{v} ,A). \\ 
\end{CD}
\end{equation}
Since $\mc{A}_v (\mc{O}_v) =A(K_v )$, we have an exact sequence: 
$$
0 \lra \widehat{\mc{A}}_v ( \mc{O}_v ) \lra A(K_{v}) \lra \oline{\mc{A}}_v 
(\mbb{F}_v), 
$$
where $\mbb{F}_v$ is the residue field of $K_v$ and 
$\oline{\mc{A}}_v$ is $\mc{A}_{v} \times 
_{\mathrm{Spec}(\mc{O}_v)} \mathrm{Spec}(\mbb{F}_v)$. 
Since $\oline{\mc{A}}_v (\mbb{F}_v)$ 
is a direct limit of finite groups, we have 
$\oline{\mc{A}}_v (\mbb{F}_v) \otimes \Q_p /\Z_p =0$. 
This implies the following assertion:   
\begin{equation}\label{equation:assertion}
\text{The map $\alpha$ in the diagram 
$(\ref{equation:flgp})$ must be surjective.}
\end{equation} Consequently, 
$A(K_v )\otimes \Q_p /\Z_p $ is a subgroup of $B_{K_v}$ 
for every $v\in S$. 
This completes the proof of the first assertion of the lemma. 
\par 
Next, we prove the second assertion. 
Suppose that $K$ is contained in $F_{\infty}$. 
Consider the following diagram: 
\small 
\begin{equation}\label{equation:twocomp}
\begin{CD}
0 @>>> \sel (A /K )  @>>> 
H^1_{\mrm{fl}} (K , A\{ p\} ) @>>> 
\underset{v}{\prod} 
\dfrac{H^1_{\mrm{fl}}(K_v ,A\{ p\})}
{A(K_v) \otimes \Q_p /\Z_p } \\ 
@. @VVV @VVV @VVV \\
0 @>>> \sel^S  (A /K )   @>>> 
H^1_{\mrm{fl}} (K ,A\{ p\}) @>>> \hspace*{-2pt}
\underset{v\in S}{\prod} 
\dfrac{H^1_{\mrm{fl}}(K_v ,A\{ p\})}
{B_{K_v}} \times 
\underset{v\not\in S }{\prod} 
\dfrac{H^1_{\mrm{fl}}(K_v ,A\{ p\})}{A(K_v) 
\otimes \Q_p /\Z_p }, \\ 
\end{CD}
\end{equation}
\normalsize
Thus, $\sel^S (A/K)/\sel (A/K )$ is a subquotient of 
$\underset{v\in S}{\prod} 
\dfrac{B_{K_v}}{A(K_v)\otimes \Q_p /\Z_p}$
by the snake lemma. 
Since $\dfrac{B_{K_v}}{A(K_v)
\otimes \Q_p /\Z_p}$ 
is a quotient of 
\begin{equation}\label{equation:cdef}
C_{K_v}:=\mrm{Coker}\left[ 
\widehat{\mc{A}}_v ( \mc{O}_v )\otimes_{\Z}\Q_p /\Z_p 
\overset{\delta}{\lra} \varinjlim_n 
H^1_{\mrm{fl}}(K_v , \widehat{\mc{A}}_v [p^n])\right] ,
\end{equation}
it suffices to show that 
$\underset{v\in S}{\prod} C_{K_v}$ 
is cofinitely generated of corank less than or equal to 
$g\sharp S$ over $\Z_p$. 
Let $K^{\mrm{ur}}_v$ be the maximal unramified extension 
of $K_v$. 
By the Hochshild-Serre spectral sequence, 
we have the following exact sequence.
\begin{multline}\label{equation:snakeord}
0 \lra 
\varinjlim_n H^1 (\Gamma ,\widehat{\mc{A}}_v  [p^n](\mc{O}^{\mrm{ur}}_v))
\lra 
\varinjlim_n H^1_{\mrm{fl}} (K_v , \widehat{\mc{A}}_v  [p^n])
\lra 
H^1_{\mrm{fl}} (K^{\mrm{ur}}_v , \widehat{\mc{A}}_v  [p^n])
^{\Gamma} \\ 
\lra 
\varinjlim_n H^2 (\Gamma ,\widehat{\mc{A}}_v  [p^n](\mc{O}^{\mrm{ur}}_v))
\lra 0 
\end{multline} 
where $\Gamma =\mrm{Gal}(K^{\mrm{ur}}_v /K_v)$. 
By the assumption that $A$ is ordinary at $v\in S$, 
$\widehat{\mc{A}}_v (\mc{O}^{\mrm{ur}}_v)$ is isomorphic to 
$(U^1_{K^{\mrm{ur}}_v})^g$, where $U^1_{K^{\mrm{ur}}_v}
\subset \mc{O}_{K^{\mrm{ur}}_v}^{\times}$ is the group of 
principal units. Since $\widehat{\mc{A}}_v [p^n](\mc{O}^{\mrm{ur}}_v)=0$, 
$(\ref{equation:snakeord})$ implies the following isomorphism: 
\begin{equation}\label{equation:urisom}
\varinjlim_n H^1_{\mrm{fl}} (K_v , \widehat{\mc{A}}_v  [p^n]) \cong 
\varinjlim_n H^1_{\mrm{fl}} (K^{\mrm{ur}}_v , \widehat{\mc{A}}_v  [p^n])
^{\Gamma} .
\end{equation}
Since 
$(\widehat{\mc{A}}_v (\mc{O}^{\mrm{ur}}_v))^{\Gamma} =
\widehat{\mc{A}}_v (\mc{O}_v )$, we 
have the following commutative diagram: 
$$ 
\begin{CD}
0 @>>> \widehat{\mc{A}}_v ( \mc{O}_v ) @>{\times p}>> 
\widehat{\mc{A}}_v ( \mc{O}_v ) @>>> 
\widehat{\mc{A}}_v ( \mc{O}_v )\otimes_{\Z} \Z /p^n \Z @>>> 0\\ 
@. @| @| @VVV @. \\
0 @>>> (\widehat{\mc{A}}_v (\mc{O}^{\mrm{ur}}_v))
^{\Gamma} @>{\times p}>> 
(\widehat{\mc{A}}_v (\mc{O}^{\mrm{ur}}_v))
^{\Gamma} @>>> 
(\widehat{\mc{A}}_v (\mc{O}^{\mrm{ur}}_v)\otimes_{\Z} \Z /p^n \Z)
^{\Gamma} @>>> 0. 
\end{CD}
$$ 
The surjectivity of the last map in the lower line of the diagram 
follows from the fact that 
$H^1 (\Gamma , \widehat{\mc{A}}_v (\mc{O}^{\mrm{ur}}_v))$ is trivial. 
Hence, we have $
\widehat{\mc{A}}_v ( \mc{O}_v )\otimes_{\Z} \Z /p^n \Z  \cong 
(\widehat{\mc{A}}_v (\mc{O}^{\mrm{ur}}_v)\otimes_{\Z} \Z /p^n \Z ) 
^{\Gamma}$. Thus, by taking the inductive limit with respect to $n$, we have:
\begin{equation}\label{equation:aurisom}
\widehat{\mc{A}}_v ( \mc{O}_v )\otimes_{\Z} \Q_p /\Z_p  \cong 
(\widehat{\mc{A}}_v (\mc{O}^{\mrm{ur}}_v)\otimes_{\Z} \Q_p /\Z_p ) 
^{\Gamma}
\end{equation}
We have 
\begin{equation}\label{equation:aurcoker}
\begin{split}
& \mathrm{Coker}
\left[ 
\widehat{\mc{A}}_v ( \mc{O}^{\mathrm{ur}}_v )\otimes_{\Z}\Q_p /\Z_p 
\overset{\delta}{\lra} \varinjlim_n 
H^1_{\mrm{fl}}(K^{\mathrm{ur}}_v , \widehat{\mc{A}}_v [p^n])\right] \\ 
& \cong \mathrm{Coker} 
\left[ (U^1_{K^{\mrm{ur}}_v} \otimes_{\Z} \Q_p /\Z_p )^g \lra 
 H^1_{\mrm{fl}}(K^{\mrm{ur}}_v , (\mu_{p^n})^g ) \right] \\
& \cong \mathrm{Coker}
\left[ (U^1_{K^{\mrm{ur}}_v} \otimes_{\Z} \Q_p /\Z_p )^g \lra 
((K^{\mrm{ur}}_v)^{\times} \otimes_{\Z} \Q_p /\Z_p )^g \right] \cong 
(\Q_p /\Z_p )^g
\end{split}
\end{equation}
By $(\ref{equation:cdef})$, $(\ref{equation:urisom})$, 
$(\ref{equation:aurisom})$ and $(\ref{equation:aurcoker})$, 
$C_{K_v}$ is a subgroup of $(\Q_p /\Z_p )^g$ for each $v\in S$.
This completes the proof of the lemma. 
\end{proof} 
We have the following lemma: 
\begin{lem}\label{lem:Hcoinv}
Under the same assumption as that of Theorem $\ref{thm:isotriv}$, 
the group $X^{S} (A/L )_{H} \cong (\sel^{S} 
(A/L )^{H})^{\vee}$ 
is a finitely generated $\Z_p$-module.
\end{lem}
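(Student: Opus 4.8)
The asserted identification $X^{S}(A/L)_{H}\cong(\sel^{S}(A/L)^{H})^{\vee}$ is Pontrjagin duality, $(M^{\vee})_{H}=(M^{H})^{\vee}$, so it suffices to prove that $\sel^{S}(A/L)^{H}$ is a cofinitely generated $\Z_p$-module. The plan is to compare $\sel^{S}(A/L)^{H}$ with $\sel^{S}(A/\fpinf)$ through the restriction map, and to bound the resulting kernel and cokernel by local and global Galois cohomology of $H=\mrm{Gal}(L/\fpinf)$.

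First I would record that $\sel^{S}(A/\fpinf)$ is cofinitely generated over $\Z_p$. Indeed, Theorem \ref{thm:cyc} gives that $X(A/\fpinf)$ is a finitely generated torsion $\La(\Gamma^{(p)})$-module, and the hypothesis $\mu(X(A/\fpinf))=0$ of Theorem \ref{thm:isotriv} forces $X(A/\fpinf)$ to be finitely generated over $\Z_p$ by the structure theorem over $\La(\Gamma^{(p)})$; hence $\sel(A/\fpinf)$ is cofinitely generated over $\Z_p$, and Lemma \ref{lem:selcomp}(2), applied with $K=\fpinf\subset\finf$, shows that $\sel^{S}(A/\fpinf)$ differs from $\sel(A/\fpinf)$ by a cofinitely generated $\Z_p$-module.

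Next I would form the commutative diagram comparing the defining sequences of $\sel^{S}(A/\fpinf)$ and $\sel^{S}(A/L)^{H}$: the localisation maps $\lambda_{\fpinf}\colon\Hfl(\fpinf,A\{p\})\to\mc{J}(\fpinf)$ and $\lambda_{L}^{H}$ into the relevant products of local conditions ($\Hfl((\fpinf)_v,A\{p\}^{\text{\'{e}t}})$ for $v\in S$, $\Hfl((\fpinf)_v,A)$ for $v\notin S$, and the $H$-invariants of the analogous product over $L$), together with the vertical restriction maps $h$ on global $H^{1}$ and $g$ on the local conditions. A diagram chase shows that $\sel^{S}(A/L)^{H}$ is an extension of a subgroup of $\Coker(h)$ by a quotient of the preimage $\lambda_{\fpinf}^{-1}(\Ker g)$, and that $\lambda_{\fpinf}^{-1}(\Ker g)$ is itself an extension of $\mrm{Im}(\lambda_{\fpinf})\cap\Ker(g)$ by $\sel^{S}(A/\fpinf)$. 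Since cofinitely generated $\Z_p$-modules are stable under subobjects, quotients and extensions, it remains to bound $\Coker(h)$ and $\mrm{Im}(\lambda_{\fpinf})\cap\Ker(g)$. For $\Coker(h)$, inflation--restriction embeds it into $H^{2}(H,A\{p\}(L))$; since $A\{p\}(L)$ is a cofinitely generated $\Z_p$-module (of corank $\le g$, because in characteristic $p$ the connected part of $A\{p\}$ has no nonzero rational points over a field) and $H$ is a torsion-free compact $p$-adic Lie group, hence of finite $p$-cohomological dimension and of type $FP_{\infty}$ over $\Z_p$, every $H^{i}(H,A\{p\}(L))$ is cofinitely generated over $\Z_p$.

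The main point --- and the step I expect to be the real obstacle --- is to show that $\mrm{Im}(\lambda_{\fpinf})\cap\Ker(g)$ is cofinitely generated over $\Z_p$, i.e. that the primes outside $S$ contribute nothing. Writing $\Ker(g)=\bigoplus_{v}\Ker(g_v)$ and, for a place $w\mid v$ of $L$ with decomposition group $H_w=\mrm{Gal}(L_w/(\fpinf)_v)$, the flat Hochschild--Serre spectral sequence identifies $\Ker(g_v)$ with $H^{1}(H_w,A\{p\}^{\text{\'{e}t}}(L_w))$ when $v\in S$ and with $H^{1}(H_w,A(L_w))$ when $v\notin S$. For the finitely many $v\in S$, $H_w$ is again a torsion-free compact $p$-adic Lie group and $A\{p\}^{\text{\'{e}t}}(L_w)$ is a cofinitely generated $\Z_p$-module, so $\Ker(g_v)$ is cofinitely generated over $\Z_p$. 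For $v\notin S$, hypothesis (1) of Theorem \ref{thm:isotriv} says $v$ is unramified in $L/\fpinf$, so $H_w$ is a quotient of the absolute Galois group of the residue field $\kappa_v$ of $(\fpinf)_v$; but $\fpinf$ contains the constant $\Z_p$-extension $\mbb{F}^{(p)}_q$, so $\mrm{Gal}(\oline{\kappa}_v/\kappa_v)\cong\prod_{\ell\ne p}\Z_\ell$ has order prime to $p$, and hence $H^{1}(H_w,A(L_w))$ has no nonzero $p$-torsion. On the other hand $\Hfl(\fpinf,A\{p\})=\varinjlim_n\Hfl(\fpinf,A[p^{n}])$ is $p$-primary, and so is $\mrm{Im}(\lambda_{\fpinf})$; therefore the intersection $\mrm{Im}(\lambda_{\fpinf})\cap\Ker(g)$ has trivial component at every $v\notin S$ and injects into $\bigoplus_{v\in S}\Ker(g_v)$, which is cofinitely generated over $\Z_p$. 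This completes the reduction. The delicate bookkeeping lies in the flat Hochschild--Serre identifications and in keeping track of $p$-primarity through the localisation sequences.
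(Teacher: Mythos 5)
Your argument is correct and reaches the same conclusion, but you handle the places outside $S$ by a genuinely different mechanism than the paper does. The paper first rewrites both $\sel^{S}(A/\fpinf)$ and $\sel^{S}(A/L)^{H}$ in terms of flat cohomology of the open curve $C_{K}\setminus S_{K}$: via Claim \ref{lem:localization} together with the proof of Lemma \ref{lem:goodred}, good reduction outside $S$ identifies the local condition at each $v\notin S$ with the unramified classes, so the primes outside $S$ never appear in the comparison diagram, and one only needs Hochschild--Serre for the covering of open curves $\varinjlim_{K}(C_{K}\setminus S_{K})\to C_{\fpinf}\setminus S$ and for the finitely many $w\in S_{L}$. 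You instead keep the full localisation map $\lambda_{\fpinf}$ into the product over \emph{all} places and must then argue separately that $\mrm{Im}(\lambda_{\fpinf})\cap\Ker(g)$ has trivial component at each $v\notin S$; your observation that the decomposition group $H_{w}$ there factors through the prime-to-$p$ residue Galois group (because $\fpinf$ already absorbs the constant $\Z_{p}$-extension), while $\mrm{Im}(\lambda_{\fpinf})$ is $p$-primary, is exactly the right point and is correct. The paper's route is a touch slicker, packaging the prime-to-$p$ local triviality into a single localisation sequence; yours makes the underlying arithmetic reason fully explicit and is a bit more flexible if one later changes the local conditions. A small sharpening you might note: if one takes $G$ pro-$p$, as in the setting of Conjecture \ref{conj:ourconj}, then $H_{w}$ at $v\notin S$ is simultaneously pro-$p$ (a closed subgroup of $H$) and pro-prime-to-$p$ (a quotient of the residue Galois group), hence trivial, so $\Ker(g_{v})=0$ outright; your $p$-primarity formulation is the one that still works when $G$ is only assumed to be a torsion-free $p$-adic Lie group, as in the literal statement of Theorem \ref{thm:isotriv}.
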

\begin{proof}
Let $S$ be a finite set of primes in $\fpinf$ where the extension 
$L/\fpinf$ is ramified. 
Since $A$ has good reduction outside $S$ and since $L/\fpinf$ 
is unramified outside $S$, 
we have the following commutative diagram: 
\small 
$$ 
\begin{CD}
0 @>>> \sel^{S} (A/\fpinf) @>>> 
H^1_{\mrm{fl}} (C_{\fpinf}\setminus S ,A\{ p\}) @>>> 
\underset{v \in S}{\prod} 
H^1_{\mrm{fl}} (F^{(p)}_{\infty ,v} ,A\{ p\}^{\text{\'{e}t}})\\ 
@. @V{a}VV @V{b}VV @VV{c}V \\
0 @>>> \sel^{S} (A/L )^{H} @>>> 
\left( \uset{K}{\varinjlim} 
H^1_{\mrm{fl}} (C_{K}\setminus S_K ,A \{ p \}) \right)^{H} @>>> 
\left( 
\underset{w\in S_L}{\prod}
H^1_{\mrm{fl}} (L_{w} ,A\{ p\}^{\text{\'{e}t}})
\right)^{H}, \\ 
\end{CD} 
$$ 
\normalsize 
where $K$ runs a finite extension of $\fpinf $ contained in 
$L$ and $S_K$ is the set of primes of $K$ lying over $S$.
By the Hochshild-Serre spectral sequence, 
we have the following exact exact sequence: 
\begin{multline*}
H^1 (H,A \{ p \}(L)) \lra 
H^1_{\mrm{fl}} (C_{\fpinf} \setminus S ,A\{ p\}) \\
 \overset{b}{\lra} 
\left( \uset{\fpinf \subset K \subset L}{\varinjlim} 
H^1_{\mrm{fl}} (C_{K} \setminus S_K ,A \{ p \}) \right)^{H} \lra 
H^2 (H,A \{ p \}(L)). 
\end{multline*}
Since $H$ is a $p$-adic Lie group and $A \{ p \}(L)$ is an abelian group 
which is cofinite type over $\Z_p$, 
$H^i (H,A \{ p \}(L) ) ^{\vee}$ 
is finitely generated over $\Z_p$ for every $i$. 
In the same way, the Pontrjagin dual of 
$\mrm{Ker}(c) \cong \underset{v\in S_{\fpinf }}{\prod} 
\underset{w \vert v}{\prod} 
H^1 (H_w , A\{ p\}^{\text{\'{e}t}}(L_w ))$ is a finitely generated 
$\Z_p$-module, where $H_w \subset H$ is the decomposition subgroup 
at $w$ of $L$. 
The module $X(A/\fpinf )= \sel (A/ \fpinf )^{\vee}$ is 
finitely generated over $\Z_p$ by the assumption of 
Theorem $\ref{thm:isotriv}$. 
By Lemma $\ref{lem:selcomp}$, $X^{S}(A/\fpinf )= 
\sel^{S} (A/ \fpinf )^{\vee}$ is also a 
finitely generated $\Z_p$-module. 
This completes the proof of Lemma. 
\end{proof}
Now, we will complete the proof of 
Theorem $\ref{thm:isotriv}$ by using the following result of 
Balister and Howson: 
\begin{lem}\cite[Corollary in \S 3]{BH}\label{lem:bh} 
Let $H$ be a $p$-adic Lie group and $X$ a compact $\La (H)$-module. 
Let $X_{H}$ denote the $H$-coinvariant quotient of 
$X$. If $X_{H}$ is a $\Z_p$-module of finite type, 
then $X$ is a $\La (H)$-module of finite type.
\end{lem}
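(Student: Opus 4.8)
The plan is to deduce this as a topological Nakayama-type statement, exploiting that $X$ is compact and that the Iwasawa algebras involved are profinite local rings with finite residue field.

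The key tool I would use is the following form of Nakayama's lemma: if $R$ is a complete profinite local ring with maximal ideal $\mathfrak m$ and finite residue field, and $Z$ is a compact $R$-module with $Z/\mathfrak m Z$ finite, then $Z$ is of finite type over $R$. Indeed, lifting an $R/\mathfrak m$-basis of $Z/\mathfrak m Z$ to elements $z_1,\dots,z_n\in Z$, the submodule $\sum_i R z_i$ is the continuous image of the compact module $R^{\oplus n}$, hence closed, and the compact quotient $\bar Z$ satisfies $\bar Z=\mathfrak m\bar Z$; since $\bigcap_{k\ge 0}\mathfrak m^k\bar Z=0$ for a compact $R$-module, $\bar Z=\mathfrak m^k\bar Z$ for every $k$ forces $\bar Z=0$. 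This is the only step where compactness of $X$ is essential, and I expect it to be the main obstacle: one must justify that finitely generated submodules of a compact module are closed and that the $\mathfrak m$-adic topology on a compact module is separated.

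First I would reduce to the case where $H$ is a pro-$p$ group. A compact $p$-adic Lie group $H$ contains an open normal pro-$p$ subgroup $H_0$ (for instance the normal core of an open uniform pro-$p$ subgroup), and $\La(H)$ is free of rank $[H:H_0]$ over $\La(H_0)$, so $X$ is of finite type over $\La(H)$ if and only if it is over $\La(H_0)$. To apply the pro-$p$ case to $(H_0,X)$ I must check that $X_{H_0}$ is of finite type over $\Z_p$: here $X_{H_0}$ is a compact module over $R:=\Z_p[H/H_0]$, a complete local ring with maximal ideal $\mathfrak m_R=pR+I_{H/H_0}$ and residue field $\F_p$, and $X_{H_0}/\mathfrak m_R X_{H_0}\cong X_H/pX_H$ is finite; by the key tool $X_{H_0}$ is of finite type over $R$, hence over $\Z_p$ as $R$ is $\Z_p$-free of finite rank. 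So one may assume $H$ pro-$p$.

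For $H$ pro-$p$, $\La(H)=\Z_p[[H]]$ is a complete local ring with maximal ideal $\mathfrak m=p\La(H)+I_H$ (with $I_H$ the augmentation ideal) and residue field $\F_p$. Since $X/I_HX=X_H$, reducing modulo $p$ gives
\[
X/\mathfrak m X\;\cong\;X_H/pX_H,
\]
which is finite by the hypothesis that $X_H$ is of finite type over $\Z_p$. The key tool then shows $X$ is of finite type over $\La(H)$, as desired.
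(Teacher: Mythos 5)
Your core idea (topological Nakayama for compact modules over a profinite local ring with finite residue field, applied after reducing to the augmentation-ideal-plus-$p$ quotient) is correct for pro-$p$ groups, and that case is all the paper actually uses: in Theorem~$\ref{thm:isotriv}$ the group $G$ is assumed torsion-free, and a compact torsion-free $p$-adic Lie group is automatically pro-$p$ (any Sylow pro-$\ell$ subgroup for $\ell\neq p$ meets an open uniform pro-$p$ subgroup trivially, so it embeds into the finite quotient and must be trivial by torsion-freeness). The paper itself does not give a proof but cites Balister--Howson.

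The gap is in your reduction step. You claim that $R=\Z_p[H/H_0]$ is a complete local ring with maximal ideal $pR+I_{H/H_0}$ and residue field $\F_p$. This holds if and only if $H/H_0$ is a $p$-group, and for a general compact $p$-adic Lie group $H$ one cannot choose the open normal pro-$p$ subgroup $H_0$ to have $p$-power index. When $H/H_0$ has order divisible by some prime $\ell\neq p$, the group ring $\Z_p[H/H_0]$ is semi-local but not local, $I_{H/H_0}$ is not contained in its Jacobson radical, and the argument breaks: knowing $X_{H_0}/(pR+I_{H/H_0})X_{H_0}$ is finite gives no control on $X_{H_0}$ modulo the actual Jacobson radical. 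In fact the lemma as literally stated is false without a pro-$p$ (or torsion-free) hypothesis on $H$: take $p$ odd, $H=\Z/2\Z$, and $X=\prod_{n\in\N}\Z_p$ with the generator of $H$ acting by $-1$ on each factor. Then $(1-\sigma)$ acts by $2$, a unit, so $X_H=0$ is finitely generated over $\Z_p$, yet $X$ is not a finitely generated $\Z_p[\Z/2\Z]$-module. So your proof is correct once $H$ is assumed pro-$p$ (which suffices for the paper's application), but the reduction as written is not valid, and the general statement it would prove is in any case false.
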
 
In fact, since $X(A/L)_H$ is a quotient of $X^{S} (A/L )_{H}$, 
$X(A/L)_H$ is a finitely generated $\Z_p$-module. 
Hence $X(A/L)$ is a $\La (H)$-module of finite type by using the above lemma.

\end{document}